\newcommand{\bbB}{\mathbb{B}}
\newcommand{\bbC}{\mathbb{C}}
\newcommand{\bbF}{\mathbb{F}}
\newcommand{\bbN}{\mathbb{N}}
\newcommand{\bbP}{\mathbb{P}}
\newcommand{\bbQ}{\mathbb{Q}}
\newcommand{\bbR}{\mathbb{R}}
\newcommand{\bbT}{\mathbb{T}}
\newcommand{\bbU}{\mathbb{U}}
\newcommand{\bbZ}{\mathbb{Z}}
\newcommand{\cB}{\mathcal{B}}
\newcommand{\cF}{\mathcal{F}}
\newcommand{\cJ}{\mathcal{J}}
\newcommand{\cO}{\mathcal{O}}
\newcommand{\cP}{\mathcal{P}}
\newcommand{\cS}{\mathcal{S}}
\newcommand{\cT}{\mathcal{T}}
\newcommand{\cU}{\mathcal{U}}
\newcommand{\fm}{\mathfrak{m}}
\newcommand{\fp}{\mathfrak{p}}
\renewcommand{\phi}{\varphi}
\providecommand{\abs}[1]{\left\lvert #1 \right\rvert}
\providecommand{\ar}{\rightarrow}
\DeclareMathOperator{\id}{id}
\DeclareMathOperator{\Spec}{Spec}
\DeclareMathOperator{\rk}{rk}
\providecommand{\tensor}{\otimes}
\DeclareMathOperator{\End}{End}
\DeclareMathOperator{\Gr}{Gr}
\DeclareMathOperator{\Mat}{Mat}
\DeclareMathOperator{\Fl}{Fl}
\DeclareMathOperator{\GL}{GL}
\DeclareMathOperator{\SL}{SL}
\DeclareMathOperator{\Cl}{Cl}
\DeclareMathOperator{\Pic}{Pic}
\DeclareMathOperator{\HH}{H}
\DeclareMathOperator{\St}{St}
\DeclareMathOperator{\lk}{lk}
\DeclareMathOperator{\str}{st}
\DeclareMathOperator{\sd}{sd}
\DeclareMathOperator{\Span}{span}
\newtheorem{thm}{Theorem}[section]
\newtheorem{cor}[thm]{Corollary}
\newtheorem{prop}[thm]{Proposition}
\newtheorem*{prop*}{Proposition}
\newtheorem{lem}[thm]{Lemma}
\newtheorem{quest}[thm]{Question}
\newtheorem*{IndProp}{Inductive Proposition}
\newtheorem{mainthm}{Theorem}
\theoremstyle{definition}
\newtheorem{defn}[thm]{Definition}
\newtheorem{exmp}[thm]{Example}
\numberwithin{claim}{thm}
\theoremstyle{remark}
\newtheorem{rem}[thm]{Remark}
\let\c@equation\c@thm
\numberwithin{equation}{section}
\title{A Solomon-Tits theorem for Rings}
\author{Matthew Scalamandre}
\date{}
\begin{document}
	\begin{abstract}
		An analog of the Tits building is defined and studied for commutative rings. We prove a Solomon-Tits theorem when $R$ either satisfies a stable range condition, or is the ring of $S$-integers of a global field. We then define an analog of the Steinberg module of $R$, and study it both as a $\bbZ$-module and as a representation. We find the rank of Steinberg when $R$ is a finite ring, and compute the length of $\St_2(R)\otimes\bbQ$ as a $\GL_2(R)$-representation when $R$ is uniserial. As an application of these results, we produce a lower bound for the rank of the top-dimensional cohomology of principal congruence subgroups of nonprime level.
	\end{abstract}
	\maketitle
	\tableofcontents
	\section{Introduction}
	The Tits building $\cT_n(K)$ is a fundamental geometric object associated to $\GL_n(K)$ for a field $K$. The classical Solomon--Tits theorem says that this is homotopy equivalent to a wedge of $(n-2)$-spheres \cite{Solomon}. If $K$ is a number field or a finite field, the top-dimensional homology of $\cT_n(K)$ (called the \emph{Steinberg module}) arises in the study of arithmetic groups \cites{BS, MPP}. In trying to understand the cohomology of principal congruence subgroups of nonprime level, one is led to consider ``Tits buildings" associated to the rings $\bbZ/N\bbZ$. In this paper, we define a Tits complex which makes sense for an arbitrary commutative ring. We prove that $\cT_n(R)$ is highly connected when $R$ satisfies a stable range condition, or when $R$ is the ring of $S$-integers of a number ring. We then study its top-dimensional homology, both as a $\bbZ$-module and as a $\GL_n(R)$-representation. Using these results, we produce a lower bound for the rank of the top-dimensional cohomology of principal congruence subgroups of nonprime level.
	\subsection{Classical Theory}\;
	
	Let $K$ be a field. The classical \emph{Tits building} associated to $\GL_n K$ is the simplicial complex $\cT_n(K)$ whose vertex set is the set of nonzero proper linear subspaces of $K^n$, and which has a $k$-simplex for every flag:
	$$0\subsetneq V_0 \subsetneq\dots\subsetneq V_{k} \subsetneq K^n.$$
	This space admits a natural simplicial action of $\GL_n K$, and encodes fundamental information about the structure of this algebraic group. The homotopy type of $\cT_n(K)$ is determined by the Solomon-Tits theorem \cite{Solomon}, which states that it is homotopy equivalent to $\displaystyle \bigvee S^{n-2}.$
	
	It follows that $\cT_n(K)$ has only one nonzero reduced homology group, which is called the \emph{Steinberg module}: $$\St_n(K):= \tilde{\HH}_{n-2}(\cT_n(K);\bbZ).$$ This is naturally a representation of $\GL_n(K)$. There are explicit classes called \emph{apartment classes} which generate $\St_n(K)$. Let $\cB = \{b_1,\dots, b_n\}$ be a basis for $K^n$. The apartment $A_\cB$ corresponding to $\cB$ is the full subcomplex on the vertices of $\cT_n(K)$ corresponding to the subspaces $\Span\{b_i\mid i\in I\}$ where $I$ ranges over nonempty proper subsets of $\{1,\dots,n\}$. The apartment $A_\cB$ is isomorphic to the barycentrically subdivided boundary of the standard $(n-1)$-simplex, and is therefore homeomorphic to a sphere $S^{n-2}$. The fundamental class of this sphere defines an \emph{apartment class} in $\St_n(K)$.
	
	The Solomon-Tits theorem states that $\St_n(K)$ has a basis consisting of the apartment classes associated to the column vectors of strict upper-triangular matrices. In particular, this computes the dimension of the Steinberg module when $K$ is a finite field:
	$$\dim \St_n(\bbF_q) = q^{\binom{n}{2}}.$$
	The Steinberg module was first constructed in \cite{Steinberg} by Steinberg as a representation of $\GL_n(\bbF_q)$, using character-theoretic methods (the geometric construction came later, from Solomon \cite{Solomon}). Steinberg proved in \cite{Steinberg} that $\St_n(\bbF_q)\otimes \bbC$ is an irreducible $\GL_n(\bbF_q)$-representation. This result has recently been extended to infinite fields by Putman-Snowden \cite{SteinIrred}.
	
	For more about the history of the Steinberg module, we refer the reader to Humphreys' survey \cite{HumSurv}, and to Steinberg's commentary at the end of his \emph{Collected Papers} \cite{SteinbergSurvey}.
	
	\subsection{Cohomology of Arithmetic Groups}\;
	
	Let $K$ be a \emph{number field} -- a finite extension of $\bbQ$ -- and let $\cO_K$ be the ring of integers of $K$.	The Steinberg module of $K$ is closely related to the cohomology of arithmetic groups by a theorem of Borel and Serre \cite{BS}. This theorem states that $\St_n(K)$ is the \emph{rational dualizing module} of any finite index subgroup $\Gamma$ of $\SL_n(\cO_K)$. If $M$ is a $\bbQ[\Gamma]$-module, and $\nu$ is the cohomological dimension of $\SL_n(\cO_K)$, this means that  
		$$\HH^q(\Gamma; M) \cong \HH_{\nu-q}(\Gamma; \St_n(K)\otimes M).$$
	In particular, 
	$$\HH^{\nu}(\Gamma;\bbQ) \cong \HH_0(\Gamma; \St_n(K)) \cong (\St_n(K))_\Gamma.$$
	The right-hand side is the module of coinvariants, the largest $\Gamma$-invariant quotient of $\St_n(K)$. Using this framework, one can produce cohomology classes in $\HH^{\nu}(\Gamma;\bbQ)$ by understanding the structure of $\St_n(K)$ as a $\Gamma$-representation. 
	
	This has been profitably done in some cases. In \cite{LSz}, Lee-Szczarba compute that $(\St_n(K))_{\SL_n(\cO_K)} = 0$ when $\cO_K$ is a Euclidean domain with a multiplicative norm function. Ash-Rudolph \cite{AR} prove the stronger statement that, when $R$ is an arbitrary Euclidean domain and $F$ is its field of fractions, $\St_n(F)$ is in fact generated as an abelian group by apartment classes corresponding to $R$-bases of $F^n$. When this generation result holds, it can be easily seen that $(\St_n(F))_{\SL_n(R)} = 0$. In particular, these results imply that $$\HH^{\nu}(\SL_n(\cO_K);\bbQ) = 0$$ whenever $\cO_K$ is a Euclidean number ring.
	
	More recently, Church-Farb-Putman \cite{CFP} prove that $\St_n(K)$ is generated by $\cO_K$-apartment classes when $\cO_K$ is a PID and $K$ admits an embedding into $\bbR$. Conversely, they show that if $\cO_K$ is \emph{not} a PID, then $\St_n(K)$ is not generated by $\cO_K$-apartment classes. In fact, they show that
	$$\dim \HH^{\nu}(\SL_n(\cO_K);\bbQ) \geq (\abs{\Cl(\cO_K)}-1)^{n-1}.$$
	
	Church-Farb-Putman leave open the case of totally complex PIDs which are not Euclidean. By a result of Weinberger \cite{Weinberger}, if the Generalized Riemann hypothesis holds, the only examples of these are imaginary quadratic. Miller-Patzt-Wilson-Yasaki \cite{imQuad} show that $\St_n(K)$ is not generated by $\cO_K$-apartment classes when $\cO_K$ is an imaginary quadratic number ring that is not Euclidean.
	
	The above results all deal with the group $\SL_n(\cO_K)$. Borel--Serre's theorem also applies to finite-index subgroups of $\SL_n(\cO_K)$. Recall the principal congruence subgroups:
	
	\begin{defn}
		Let $R$ be a ring, and $I$ an ideal. Then, the \emph{principal congruence subgroup} $\Gamma_n(I)$ of $\GL_n(R)$ is the kernel of the natural map $\GL_n(R)\to \GL_n(R/I)$. Equivalently, $$\Gamma_n(I) = \{A\in \GL_n(R)\mid A\equiv \id \pmod{I}\}.$$
	\end{defn}
	
	Define the \emph{level $m$ principal congruence subgroup} of $\SL_n(\bbZ)$ to be $$\Gamma_n(m):=\Gamma_n(m\bbZ).$$
		
	The Tits buildings associated to finite fields are relevant to the top-dimensional cohomology of principal congruence subgroups of prime level $p$. To find a lower bound on the dimension of this cohomology using Borel--Serre's theorem, we want to find a large $\Gamma_n(p)$-invariant quotient of $\St_n(\bbQ)$. A natural candidate is the group $\tilde{\HH}_{n-2}(\cT_n(\bbQ)/\Gamma_n(p),\bbZ)$. We have a diagram $$\xymatrix{\cT_n(\bbQ) \ar[rr] \ar[dr] & & \cT_n(\bbF_p) \\ & \cT_n(\bbQ)/\Gamma_n(p) \ar[ur] & }$$ where the map $\cT_n(\bbQ)\to \cT_n(\bbF_p)$ is defined on vertices by sending a subspace $V\subsetneq \bbQ^n$ to the image of $(V\cap \bbZ^n)$ in $\bbF_p^n$. In this case, Miller-Patzt-Putman \cite{MPP} use $\cT_n(\bbF_p)$ to analyze the topology of $\cT_n(\bbQ)/\Gamma_n(p)$, and conclude that the map 
	$$\St_n(\bbQ)\to \tilde{\HH}_{n-2}(\cT_n(\bbQ)/\Gamma_n(p),\bbZ)$$
	is surjective but usually not injective. We would like this framework to work for general levels $m$, but to do so, we first need to understand what is meant by $\cT_n(\bbZ/m\bbZ)$.
	\begin{rem}
		In \cite{Schwermer}, Schwermer uses the theory of automorphic forms to provide a lower bound for $\dim \HH^\nu(\Gamma(p^k),\bbQ)$, for odd $p$, such that $p^k>5$. This is a stronger lower bound than what is found by Miller-Patzt-Putman.
	\end{rem}
		
	\subsection{The Tits Complex over a Ring}\;
	
	To this end, we define a \emph{Tits complex} $\cT_n(R)$ for a commutative ring $R$:
	\begin{defn}
		Let $\cT_n(R)$ be the simplicial complex whose vertex set consists of those direct summands $V$ of $R^n$ such that both $V$ and $R^n/V$ are free, and which has a $k$-simplex for every flag
		$$0\subsetneq V_0\subsetneq\dots\subsetneq V_{k}\subsetneq R^n$$
		that is a subflag of some complete flag.
	\end{defn}
	
	The vertex set is exactly the set of length 1 flags which can be extended to a complete flag (cf. Proposition \ref{prop:goodFlags}).  This definition agrees with the classical definition when $R$ is a field, and is functorial in $R$. It is additionally always a \emph{pure} complex of dimension $(n-2)$: every simplex is contained in a simplex of maximum dimension. 
	
	\begin{rem}\label{rem:OKvsK}
		We choose not to call this complex a ``Tits building". If $R$ is not a field, it does not appear to be the Br\^uhat-Tits building associated to any $BN$-pair on $\GL_n(R)$. However, in some interesting cases, this complex is in fact a building. 
		Let $\cO_K$ be the ring of integers of a number field $K$. Then, $$\cT_n(\cO_K)\subseteq\cT_n(K).$$ These complexes are equal exactly when $\cO_K$ is a PID (cf.\ Theorem \ref{thm:localTits}). In particular, $\cT_n(\bbZ)$ is equal to the Tits building $\cT_n(\bbQ)$.
	\end{rem}
	
	\begin{rem}
		Motivated by the rank filtration in algebraic K-theory, Rognes defines a similar complex in the unpublished \cite{RognesUnp}. This coincides with our Tits complex when $R = \bbZ/p^n\bbZ$. 
	\end{rem}
	
	Our main result determines the homotopy type of this complex for two families of rings, whose definitions we recall: 
	\begin{defn}
		A ring $R$ satisfies $SR_2$ if, whenever $a,b\in R$ together generate the unit ideal, there is some $t\in R$ such that $a-tb$ is a unit.
	\end{defn}
	
	The property $SR_2$ is one of Bass' stable range conditions, which we discuss in \S\ref{section:Bass}. All local rings satisfy $SR_2$, as do all Artinian rings and thus all finite rings (cf. Proposition \ref{prop:semilocalSR2}). In particular, if $\cO$ is a number ring and $I$ is a nonzero ideal, then $\cO/I$ satisfies $SR_2$. We are especially interested in these rings for their applications to the cohomology of arithmetic groups. 
	
	\begin{defn}
		Let $K$ be a number field. Let $\Omega$ be the set of places of $K$, thought of as equivalence classes of nontrivial valuations on $K$. The \emph{infinite places} are the Archimedian valuations corresponding to embeddings of $K$ into $\bbR$ or $\bbC$. The remainder correspond to embeddings into $p$-adic fields. We write $S_\infty$ for the set of infinite places. Given a finite $S\subseteq \Omega$ containing $S_\infty$, we define
		$$\cO_S = \{ x\in K \mid \abs{x}_v\leq 1 \text{ for all } v\in \Omega\setminus S\}.$$
		We call these rings \emph{Dedekind domains of arithmetic type}. These include all number rings (when $S=S_\infty$) as well as certain localizations of number rings. 
	\end{defn}

	Our main theorem states that $\cT_n(R)$ is highly connected. In fact, we prove more. A simplicial complex $X$ is \emph{$n$-spherical} if $X$ is $n$-dimensional and $(n-1)$-connected. The \emph{star} of a $k$-simplex $\sigma$ (written $\str \sigma$) is the closure of the subcomplex of $X$ consisting of all simplices containing $\sigma$ as a face. This is always contractible. The \emph{link} of $\sigma$ (written $\lk \sigma$) is the subcomplex of $\str\sigma$ consisting of every face which is disjoint from $\sigma$. The star of a simplex $\sigma$ is the simplicial join of $\sigma$ and $\lk \sigma$.
	\begin{defn}\label{def:CM}
		A simplicial complex $X$ is \emph{Cohen-Macaulay of dimension $n$} if it is $n$-spherical and if, for every $k$-simplex $\sigma$, we have that $\lk \sigma$ is $(n-k-1)$-spherical.
	\end{defn}
	Our first main theorem is:
	\begin{mainthm}\label{Connectivity}
		Let $R$ be a ring satisfying $SR_2$, or a Dedekind domain of arithmetic type. Then, the Tits complex $\cT_n(R)$ is Cohen-Macaulay of dimension $n-2$. In particular, it is $(n-3)$-connected.
	\end{mainthm}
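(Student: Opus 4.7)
The plan is to induct on $n$, proving two statements together at each stage: that $\cT_n(R)$ is $(n-3)$-connected of dimension $n-2$, and that its links decompose as joins of smaller Tits complexes over $R$; Cohen--Macaulayness then follows by combining the link decomposition with the inductive sphericity hypothesis. For any $k$-simplex $\sigma = (V_0 \subsetneq \dots \subsetneq V_k)$ of $\cT_n(R)$, the good-flag condition forces the successive quotients $V_0, V_1/V_0, \dots, V_k/V_{k-1}, R^n/V_k$ all to be free $R$-modules, and since each step splits as a direct summand of the next, inserting a new good summand into $\sigma$ is equivalent to independently choosing a good flag in one of these quotients. This yields
\[
\lk(\sigma) \;\cong\; \cT(V_0) * \cT(V_1/V_0) * \cdots * \cT(V_k/V_{k-1}) * \cT(R^n/V_k).
\]
Applying the inductive sphericity hypothesis to each factor and using the standard join formula ($X*Y$ is $(a+b+2)$-connected when $X$ is $a$-connected and $Y$ is $b$-connected) shows that $\lk(\sigma)$ is $(n-k-3)$-spherical, matching Definition~\ref{def:CM}.

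Next, I would prove sphericity in the two cases separately. For $R$ satisfying $SR_2$, fix the standard good line $L_0 = R e_1$. Its link is $\cT_{n-1}(R)$, which is $(n-4)$-connected by induction, so $\str(L_0)$ is a contractible cone that is one dimension short of forcing the required connectivity on $\cT_n(R)$. To close the gap I would run a pushing argument in the style of Maazen, Charney, and van der Kallen: given a simplicial sphere $\Sigma \subseteq \cT_n(R)$ of dimension at most $n-3$, homotope it into $\str(L_0)$ one simplex at a time. The algebraic input is that $SR_2$ converts any unimodular pair $(a,b)$ into a unit $a-tb$; at the level of flags, this says any good line $L \neq L_0$ can be placed together with $L_0$ into a common good apartment, which is precisely what is needed to run the induction on simplices of $\Sigma$.

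For $R = \cO_S$ a Dedekind domain of arithmetic type, which need not satisfy $SR_2$, the plan is to exploit the inclusion $\cT_n(\cO_S) \hookrightarrow \cT_n(K)$ where $K$ is the fraction field. Since $\cT_n(K)$ is $(n-3)$-connected by the classical Solomon--Tits theorem, the problem reduces to showing that this inclusion is itself highly connected --- that any simplicial sphere in $\cT_n(K)$ can be homotoped into the subcomplex $\cT_n(\cO_S)$. A vertex of $\cT_n(\cO_S)$ corresponds to a $K$-subspace $W \subseteq K^n$ such that the lattice $W \cap \cO_S^n$ and its quotient both have trivial Steinitz class. I would use Dirichlet's $S$-unit theorem together with strong approximation to modify integral bases locally at the finite places of $\cO_S$ and realise any required Steinitz class, then organise this into a relative pushing argument that parallels the $SR_2$ case but replaces the stable-range move with a local-to-global adjustment.

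The main obstacle will be the Dedekind case. For $SR_2$ rings the pushing argument fits into a well-developed template and the link decomposition is immediate. For $\cO_S$ the class group obstructs freeness of intermediate lattices $W \cap \cO_S^n$, and there is no purely algebraic analogue of $SR_2$ to produce the needed common apartment in a single step. The delicate task will be to show that for any finite configuration of good lines in $\cO_S^n$ and any target $K$-subspace, one can find a compatible good $\cO_S$-summand; the arithmetic ingredients (unit theorem, strong approximation) have to be packaged into a single combinatorial move that simultaneously trivialises every Steinitz-class obstruction encountered along the homotopy.
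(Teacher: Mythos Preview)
Your link decomposition is correct and matches how the paper upgrades sphericity to Cohen--Macaulayness (via Proposition~\ref{prop:QuillenCM}). The sphericity arguments, however, diverge substantially from the paper, and your $SR_2$ step contains a concrete error.

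For $SR_2$ rings the paper does \emph{not} run a Maazen--Charney pushing argument on $\cT_n(R)$. Instead it filters by the subcomplexes $\cT_{n,m}(R)$ of summands of rank $\leq m$, handles $\pi_0$ and $\pi_1$ separately using an equivariant map from the Cayley graph of $\SL_n(R)$ together with the Steinberg presentation and surjective stability for $K_2$ (Theorems~\ref{0conn} and~\ref{1conn}), and then obtains higher acyclicity by comparing with the partial basis complex $\cB_n(R)$ through Quillen's Theorem~\ref{Quillen}, leaning on van der Kallen's result that $\cB_n(R)$ is Cohen--Macaulay. Your key move---that $SR_2$ lets any good line $L\neq L_0$ share an apartment with $L_0$---is false. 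Take $R=\bbZ/4\bbZ$, $L_0=Re_1$, $L=R(e_1+2e_2)$: any common apartment would force $\{e_1,e_1+2e_2\}$ to be a partial basis, but their span $Re_1+2Re_2$ is not a summand of $R^n$. The correct elementary connection (used in Theorem~\ref{0conn}) passes through the rank-$2$ vertex $[Re_1\oplus Re_2]$, not through a shared apartment. Even repaired, a pushing argument would need a badness function and a verified link condition for higher simplices, neither of which you supply; there is no evident complexity on summands of a general $SR_2$ ring, which is why the paper routes through $\cB_n(R)$.

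For Dedekind domains of arithmetic type the paper's argument is again entirely different and much shorter than your plan. It never analyses the pair $(\cT_n(K),\cT_n(\cO_S))$. Instead it proves a localisation lemma: inverting a generator of a \emph{principal} prime leaves $\cT_n$ unchanged (Lemma~\ref{CheekyLocalize}), because the kernel of $\Pic(\cO)\to\Pic(\cO[1/s])$ is trivial. Since every number ring has infinitely many principal primes (Lemma~\ref{lem:Chebotarev}), one passes from $\cO_K$ to some $\cO_S$ with $|S|\geq 2$ and a non-complex place, where Theorem~\ref{TitsFiltConn} applies via Church--Farb--Putman's Theorem~\ref{CFP}. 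Your proposal to homotope disks in $\cT_n(K)$ rel boundary into $\cT_n(\cO_S)$ using strong approximation and the $S$-unit theorem is, as you concede, the crux, and you have not indicated what the actual move would be: a bad vertex is a $K$-subspace $W$ with $[W\cap\cO_S^n]$ nontrivial in $\Pic(\cO_S)$, and there is no evident single adjustment that kills this class while preserving all incidences in the disk. The paper sidesteps this entirely.
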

	This reduces to the classical Solomon--Tits theorem when $R$ is a field. Our proof of this theorem is not similar to any proof of the classical Solomon-Tits theorem known to the author, and occupies all of \S4. 
	
	\begin{rem}
		Using different methods, Rognes proves Theorem \ref{Connectivity} for $R=\bbZ/p^n\bbZ$ in \cite{RognesUnp}. It seems difficult to extend his methods to address larger families of rings.
	\end{rem}
	
	We define the \emph{Steinberg module of $R$} $$\St_n(R):= \tilde{\HH}_{n-2}(\cT_n(R);\bbZ),$$ and define \emph{apartment classes} as follows.
	
	\begin{defn}\label{defn:aptclass}
		Let $\{v_1,\dots v_n\}$ be a free basis of $R^n$. Consider the poset $\cP([n])$ of proper nonempty subsets of $\{1,\dots, n\}$ under containment. The nerve of this poset (cf. Definition \ref{defn:nerve}) is the barycentrically subdivided boundary of the $(n-1)$-simplex $\sd\partial\Delta^{n-1}$, which is homeomorphic to $S^{n-2}$. We have a simplicial map $$f\colon\sd\partial\Delta^{n-1}\to \cT_n(R)$$ sending a subset $\{i_1, \dots, i_k\}$ to $\Span\{v_{i_1}, \dots, v_{i_k}\}$. The \emph{apartment class} $\left[\begin{array}{c|c|c}v_1 & \dots & v_n\end{array}\right]$ is then to be the pushforward of the fundamental class $f_*[S^{n-2}]\in \tilde{\HH}_{n-2}(\cT_n(R);\bbZ)$.
	\end{defn}
	
	In \S5, we prove that these generate the Steinberg module:
	\begin{mainthm}\label{mainthm:AptsGen}
		If $R$ satisfies $SR_2$, or is a Dedekind domain of arithmetic type with a non-complex embedding, then $\St_n(R)$ is generated by apartment classes.
	\end{mainthm}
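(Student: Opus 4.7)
The plan is to prove the theorem by induction on $n$, handling the $SR_2$ case first and then reducing the Dedekind case to it. For the base case $n=2$ with $R$ satisfying $SR_2$, the complex $\cT_2(R)$ is zero-dimensional with vertices the unimodular lines $Rv\subseteq R^2$, so $\St_2(R)$ is generated by formal differences $[Rv_1]-[Rv_2]$, and apartment classes are precisely those differences with $\{v_1,v_2\}$ a free basis. Given any two unimodular vectors $v_1,v_2$, I would complete $v_1$ to a basis $\{v_1,w\}$ and write $v_2=aw+bv_1$; since $(a,b)=R$ by unimodularity of $v_2$, $SR_2$ produces $t$ with $a-tb$ a unit, and then both $\{v_1,w+tv_1\}$ and $\{w+tv_1,v_2\}$ are bases, exhibiting $[Rv_1]-[Rv_2]$ as a sum of two apartment classes via the intermediate vertex $R(w+tv_1)$.

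For the inductive step with $n\geq 3$ and $R$ satisfying $SR_2$, I would fix the vertex $v_0=Re_1$ and consider the decomposition $\cT_n(R)=\str(v_0)\cup Y$, where $Y$ is the full subcomplex of $\cT_n(R)$ on $V(\cT_n(R))\setminus\{v_0\}$ and $\str(v_0)\cap Y=\lk(v_0)\cong\cT_{n-1}(R^{n-1})$ (via $V\mapsto V/v_0$). Combining the Mayer--Vietoris sequence with contractibility of $\str(v_0)$ and Theorem~\ref{Connectivity} (which concentrates the homology of $\lk(v_0)$ in degree $n-3$) yields an exact sequence relating $\St_n(R)$ to $H_{n-2}(Y)$ and $\St_{n-1}(R^{n-1})$. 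By induction $\St_{n-1}(R^{n-1})$ is generated by apartment classes, and these lift along $\str(v_0)=v_0*\lk(v_0)$ to apartment classes $[e_1|\tilde u_1|\cdots|\tilde u_{n-1}]$ of $\cT_n(R)$ whose Mayer--Vietoris connecting image recovers the original class. It remains to show that $H_{n-2}(Y)$ is generated by apartment classes of $\cT_n(R)$ whose defining bases avoid $Re_1$; I would prove this by iterating the base-case $SR_2$ manipulation within $Y$, moving cycles into apartment position without ever introducing $v_0$ as a vertex. This second generation step is the main obstacle, as it requires careful control of which bases arise during the reduction, and I expect it to use both the higher connectivity of relevant subcomplexes of $Y$ (furnished by Theorem~\ref{Connectivity} applied to suitable restrictions) together with repeated applications of $SR_2$.

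For the Dedekind case $R=\cO_S$, I would reduce to the $SR_2$ case by choosing a nonzero ideal $I\subseteq\cO_S$ so that $\cO_S/I$ is a finite ring (and hence satisfies $SR_2$), and exploiting the reduction map $\pi_*\colon\St_n(\cO_S)\to\St_n(\cO_S/I)$. For any $\xi\in\St_n(\cO_S)$, the image $\pi_*(\xi)$ is a sum of apartment classes in $\cT_n(\cO_S/I)$ by the previous case, and each such apartment class lifts to an apartment class of $\cT_n(\cO_S)$ provided $\GL_n(\cO_S)\to\GL_n(\cO_S/I)$ is surjective on free bases. This surjectivity is precisely where the non-complex embedding hypothesis enters, via a Bass--Milnor--Serre style argument controlling $SK_1$ of $\cO_S$. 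The residual class in the kernel of $\pi_*$ would then be handled by an Ash--Rudolph style reduction-theory argument, using the real embedding to provide a Euclidean-like size function on bases and iteratively reducing the size via $\cO_S$-apartment classes until the class becomes trivial.
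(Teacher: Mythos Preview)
Your proposal has genuine gaps in both halves, and misses the unified mechanism the paper actually uses.

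\textbf{The $SR_2$ case.} Your base case is fine, but the inductive step is not. In the Mayer--Vietoris sequence for $\cT_n(R)=\str(v_0)\cup Y$ you correctly identify that everything reduces to showing $\tilde{\HH}_{n-2}(Y)$ is generated by apartment classes supported away from $Re_1$. But this is essentially the original problem again: $Y$ is $\cT_n(R)$ with a single vertex deleted, and it carries no inductive structure of the sort you need. Your plan to ``iterate the base-case $SR_2$ manipulation within $Y$'' is not a strategy---the $n=2$ trick moves a single vertex along an edge, whereas here you must control an $(n-2)$-cycle in a complex with no evident retraction. The classical Solomon--Tits contraction (which does remove vertices one at a time) relies on the existence of canonical complements to subspaces, which fails over non-field rings.

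\textbf{The Dedekind case.} Reducing modulo an ideal $I$ and then handling $\ker\pi_*$ by ``Ash--Rudolph style reduction theory'' is not viable as stated. Ash--Rudolph's argument produces generators for $\St_n(K)$ over the field of fractions using a Euclidean algorithm; it does not give you control over $\ker(\St_n(\cO_S)\to\St_n(\cO_S/I))$, which is an enormous subrepresentation with no evident reduction-theoretic description. Moreover, ``non-complex place'' includes finite places, so there need not be any real embedding to furnish a size function.

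\textbf{What the paper does.} The paper's argument is uniform across both hypotheses and avoids induction on $n$ entirely. The key object is the partial basis complex $\cB_n(R)$ and the map $\Span\colon\sd\cB'_n(R)\to\cT_n(R)$. Both hypotheses guarantee that $\cB_n(R)$ is Cohen--Macaulay of dimension $n-1$ (van der Kallen for $SR_2$, Church--Farb--Putman for the Dedekind case---this is precisely where the non-complex place enters). One then checks, via Quillen's poset fiber theorem, that $\Span_*\colon\tilde{\HH}_{n-2}(\cB'_n(R))\to\St_n(R)$ is surjective; combined with the surjection $\partial\colon C_{n-1}(\cB_n(R))\twoheadrightarrow\tilde{\HH}_{n-2}(\cB'_n(R))$ coming from acyclicity of $\cB_n(R)$, this exhibits every class in $\St_n(R)$ as a sum of images of top simplices of $\cB_n(R)$, which are exactly apartment classes. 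You should look at Lemma~\ref{AptsSurj} and Theorem~\ref{AptGen}.
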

	By Remark \ref{rem:OKvsK}, this is essentially already known for Dedekind domains of arithmetic type which are PIDs, but is otherwise new. When a number ring $\cO_K$ is not a PID, the results of Church--Farb--Putman \cite{CFP} imply that $\St_n(\cO_K)$ cannot be isomorphic to $\St_n(K)$. In fact, $\St_n(\cO_K)$ is exactly the $\SL_n(\cO_K)$-subrepresentation of $\St_n(K)$ generated by $\cO_K$-apartment classes. The requirement of a non-complex embedding cannot be dropped, by the main theorem of Miller--Patzt--Wilson--Yasaki \cite{imQuad}.
	
	It is then natural to ask if the rest of the Solomon--Tits theorem holds: namely, does $\St_n(R)$ have a basis consisting of upper-triangular apartment classes? The answer is negative:
	\begin{mainthm}\label{mainthm:UTApts}
		Let $R$ be a commutative ring. The upper-triangular apartment classes are linearly independent in $\St_n(R)$, but do not span unless $R$ is a field.
	\end{mainthm}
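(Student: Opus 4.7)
The plan is to prove the two halves by different arguments: linear independence by a dual-chamber pairing, and non-spanning by a chain-level support obstruction that uses the top-dimensionality of $\cT_n(R)$.

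For linear independence, given a strict upper-triangular matrix $M$ with columns $v_i = e_i + \sum_{j<i} c_{ji} e_j$, we introduce the \emph{opposite chamber} of $A_M$, namely the top simplex
$$\sigma_M = \bigl(\Span\{v_n\} \subsetneq \Span\{v_{n-1},v_n\} \subsetneq \cdots \subsetneq \Span\{v_2,\ldots,v_n\}\bigr).$$
The crucial claim is that $\sigma_M$ occurs as a top simplex of $A_{M'}$ (for another strict upper-triangular $M'$) only if $M = M'$. If $\sigma_M$ is realized in $A_{M'}$ by an ordering $\pi$ of $\{1,\ldots,n\}$, then the equality $\Span\{v_n\} = \Span\{v'_{\pi(1)}\}$ combined with the fact that $v'_i$ has a $1$ in the $i$-th coordinate forces $\pi(1)=n$ and $v'_n = v_n$; proceeding by descending induction, $\pi(k) = n-k+1$ and $v'_{n-k+1} = v_{n-k+1}$. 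We then define $\phi_M \in C^{n-2}(\cT_n(R);\bbZ)$ as the cochain dual to $\sigma_M$, with sign chosen so that it evaluates to $+1$ on the oriented copy of $\sigma_M$ in $A_M$. Since $\cT_n(R)$ has dimension $n-2$, we have $C^{n-1} = 0$, so $\phi_M$ is automatically a cocycle and descends to a functional on $\St_n(R)$ satisfying $\phi_M([A_{M'}]) = \delta_{M,M'}$; any finite linear dependence therefore collapses coefficient by coefficient.

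For non-spanning, suppose $R$ is not a field, and pick a nonzero non-unit $r\in R$. The change-of-basis matrix sending the standard basis to $(e_1 + re_2, e_2, e_3, \ldots, e_n)$ has determinant $1$, so $V_0 := \Span\{e_1 + re_2\}$ is a vertex of $\cT_n(R)$ and $A := [e_1 + re_2 \mid e_2 \mid \cdots \mid e_n]$ is a well-defined apartment class. We claim $V_0$ appears in no upper-triangular apartment. A rank-one vertex of such an apartment has the form $\Span\{v_i\}$ with $v_i = e_i + \sum_{j<i} c_{ji} e_j$, and $\Span\{v_i\} = V_0$ would require $u(e_1 + re_2) = v_i$ for some unit $u\in R^\times$. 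The $e_i$-coefficient rules out $i\geq 3$ (giving $1 = 0$), the case $i = 2$ forces $ur=1$ (so $r$ is a unit), and $i=1$ forces $ur = 0$ hence $r=0$, each contradicting the choice of $r$.

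To finish, let $\cT^{ut}\subseteq\cT_n(R)$ be the subcomplex whose simplices lie in some upper-triangular apartment. Any $\bbZ$-linear combination of upper-triangular apartment classes is, as a chain, supported on $\cT^{ut}$ and so has coefficient zero on every simplex containing $V_0$. Because $\cT_n(R)$ has dimension $n-2$ there are no $(n-1)$-chains, and consequently $\St_n(R) = \tilde{\HH}_{n-2}(\cT_n(R);\bbZ)$ equals the group of $(n-2)$-cycles, so equality in $\St_n(R)$ is equality of chains (for $n=2$ one works instead with the augmentation kernel in $C_0$). The cycle $A$ contains $(n-1)!$ top simplices through $V_0$, each with coefficient $\pm 1$, none of which lies in $\cT^{ut}$, so $[A]$ is not in the span. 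The principal obstacle is the uniqueness of $M$ from $\sigma_M$; the inductive leading-coefficient argument works over an arbitrary commutative ring precisely because the diagonal entries of $M$ are all equal to $1$, which is a unit.
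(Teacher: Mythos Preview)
Your proof is correct. The linear independence half is essentially identical to the paper's argument: both introduce the ``opposite chamber'' (the reverse flag $\Span\{v_n\}\subsetneq\Span\{v_{n-1},v_n\}\subsetneq\cdots$) and its dual cochain, and both run the same descending induction on coordinates to show that this chamber determines the upper-triangular matrix uniquely.

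For the non-spanning half your route differs from the paper's. You exhibit a single apartment $A=[e_1+re_2\mid e_2\mid\cdots\mid e_n]$ whose rank-one vertex $V_0=\Span\{e_1+re_2\}$ lies in no upper-triangular apartment, and then use the fact that $\St_n(R)$ sits inside $C_{n-2}$ (no $(n-1)$-chains, hence no boundaries) to turn this into a chain-level support obstruction. The paper instead reuses the chamber functionals from the first half: it forms
\[
\eta=\left[\begin{array}{c|c|c}e_2 & e_1+me_2 & e_3\,\cdots\end{array}\right]+[\id_n]
\]
for $m$ a nonzero element of a maximal ideal, checks that every reverse upper-triangular chamber map annihilates $\eta$ (the two summands cancel on the unique relevant flag), but that $\eta$ is supported on a non-upper-triangular flag and hence nonzero. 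Your argument is arguably more direct, since it avoids any cancellation analysis and needs only one apartment. The paper's construction, on the other hand, has the side benefit that $\eta$ visibly lies in $\ker\bigl(\St_n(R)\to\St_n(R/\fm)\bigr)$, and indeed this same class is recycled later to prove reducibility of $\St_n^K(R)$ when $R$ is not a field.
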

	However, we still produce a recursive formula for the dimension of $\St_n(R)$ over a finite ring.
	\begin{defn}
		Let $n\geq k\geq 0$. The \emph{Grassmannian} $\Gr_k^n(R)$ is the set of rank $k$ free and cofree direct summands of $R^n$. 
	\end{defn}
	This is a transitive $\GL_n(R)$-set, and its cardinality can be computed for finite rings (cf. Corollary \ref{flagTrans} and Proposition \ref{GrassSize}). Notice that $\abs{\Gr_0^n(R)}=1$. We prove:
	\begin{mainthm}\label{recdim}
		Let $R$ a finite ring, and, for $n\geq 1$, let $d_n = \rk \St_n(R)$. Define $d_0 = 1$. We have the following recursive formula:
		$$d_0 = 1; \qquad \qquad d_n =\sum_{i=1}^n (-1)^{i-1} \abs{\Gr_{n-i}^n(R)} d_{n-i}\ \ \text{ for } n\geq 1.$$
	\end{mainthm}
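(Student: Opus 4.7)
The plan is to compute $d_n$ via the reduced Euler characteristic of $\cT_n(R)$. Since $R$ is finite it satisfies $SR_2$, so Theorem~\ref{Connectivity} implies $\cT_n(R)$ is Cohen--Macaulay of dimension $n-2$, hence homotopy equivalent to a wedge of $(n-2)$-spheres. Thus $\tilde{\HH}_{n-2}(\cT_n(R);\bbZ)$ is free of rank $d_n$, all other reduced homology vanishes, and
$$\tilde\chi(\cT_n(R))=(-1)^{n-2}d_n.$$
The recursion will come from a second computation of this Euler characteristic, partitioning simplices by their largest element.

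Every simplex of $\cT_n(R)$ is a flag $V_0\subsetneq\dots\subsetneq V_k\subsetneq R^n$; call $V_k$ its \emph{top}. For fixed $V\in\Gr_m^n(R)$ with $1\le m\le n-1$, simplices of $\cT_n(R)$ with top $V$ should correspond bijectively to (possibly empty) simplices of $\cT_m(R)$ via
$(V_0\subsetneq\dots\subsetneq V_{k-1}\subsetneq V)\mapsto(V_0\subsetneq\dots\subsetneq V_{k-1})$,
regarded as a flag in $V\cong R^m$; this bijection raises dimension by $1$. The substantive check is that the ``extends to a complete flag'' condition transfers in both directions: given a complete flag of $R^n$ passing through the original simplex, intersecting with $V$ produces a complete flag of $V$ through the $V_j$'s, and conversely any complete flag of $V$ can be combined with the lift of an arbitrary complete flag of the free quotient $R^n/V$ (which exists because $V\in\Gr_m^n(R)$) to assemble a complete flag of $R^n$. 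Hence the total signed contribution of simplices with top $V$ to $\tilde\chi(\cT_n(R))$ is $-\tilde\chi(\cT_m(R))$.

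Summing over all tops and including the empty simplex (contribution $-1$) gives
$$\tilde\chi(\cT_n(R))=-1-\sum_{m=1}^{n-1}\abs{\Gr_m^n(R)}\,\tilde\chi(\cT_m(R)).$$
Substituting $\tilde\chi(\cT_m(R))=(-1)^m d_m$ for each $m\ge 1$ (for $m=1$ the complex $\cT_1(R)$ is empty, so $\tilde\chi=-1$ and $d_1=1$, consistent with $(-1)^1 d_1$), multiplying through by $(-1)^n$, and reindexing $i=n-m$ converts the sum to $\sum_{i=1}^{n-1}(-1)^{i-1}\abs{\Gr_{n-i}^n(R)}d_{n-i}$, while the residual $(-1)^{n+1}$ equals $(-1)^{n-1}\abs{\Gr_0^n(R)}d_0$ and absorbs as the $i=n$ term, yielding the stated formula. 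The main obstacle is verifying the flag bijection and the compatibility of the completability condition on both sides; once that is in place, the remaining Euler-characteristic bookkeeping is routine.
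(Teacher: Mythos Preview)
Your proof is correct, and takes a genuinely different route from the paper's.

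The paper proves Theorem~\ref{recdim} via the filtration $\cT_{n,\bullet}(R)$: it combines the cofiber sequence of Proposition~\ref{FiltQuot} with the surjectivity in Lemma~\ref{AptsSurj} to obtain short exact sequences
\[
0\to \tilde{\HH}_m(\cT_{n,m+1})\to \bigoplus_{\Gr_{m+1}^n}\tilde{\HH}_{m-1}(\cT_{m+1})\to \tilde{\HH}_{m-1}(\cT_{n,m}) \to 0,
\]
and then unwinds the resulting two-term recursion $d_{n,m+1}=\abs{\Gr_{m+1}^n}d_{m+1}-d_{n,m}$ all the way down to $d_{n,1}$. Your argument bypasses the filtration entirely: you compute the reduced Euler characteristic of $\cT_n(R)$ directly, partitioning simplices by their top element and using that the simplices with top $V\in\Gr_m^n(R)$ biject with the simplices (including the empty one) of $\cT_m(R)$.

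Your approach is more elementary and self-contained: it does not need Lemma~\ref{AptsSurj} or the cofiber description of Proposition~\ref{FiltQuot}, only the sphericality of $\cT_n(R)$ from Theorem~\ref{Connectivity}. The paper's approach, on the other hand, gives more along the way---it computes the ranks $d_{n,m}=\rk\tilde{\HH}_{m-1}(\cT_{n,m}(R))$ of the filtration pieces, not just $d_n$ itself.

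One small simplification available to you: the bijection you verify by hand is exactly the poset isomorphism $\bbT_n(R)_{<V}\cong\bbT_m(R)$ already established at the end of the proof of Theorem~\ref{Connectivity}. Once you know chains in $\bbT_n(R)$ ending at $V$ correspond to chains in $\bbT_n(R)_{<V}$ (plus the empty chain), there is no need to argue separately about completability in each direction---the nerve description of $\cT_n(R)$ makes the correspondence automatic.
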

	The author is not aware of a closed form for this expression (except, of course, when $R$ is a finite field). We do find that, when $\abs{R}$ is sufficiently large, the dimension of $\St_n(R)$ is not much bigger than $\abs{R}^{\binom{n}{2}}$. So in these cases, the upper-triangular apartment classes form ``most" of a basis for $\St_n(R)$. The form of Theorem \ref{recdim} is also amenable to computer implementation -- see Table \ref{table:SteinDim}.
	
	By the functoriality of the Tits complex, we have a map $$\St_n(\bbQ)\cong \St_n(\bbZ)\to \St_n(\bbZ/m\bbZ).$$ By Theorem \ref{mainthm:AptsGen}, this map is surjective, and so $\St_n(\bbZ/m\bbZ)$ is a quotient of $\St_n(\bbQ)$. This is a map of $\SL_n(\bbZ)$-representations, and $\St_n(\bbZ/m\bbZ)$ is $\Gamma_n(m)$-invariant. Borel-Serre duality then implies 
	$$\dim\HH^{\binom{n}{2}}(\Gamma_n(m);\bbQ)\geq \rk \St_n(\bbZ/m\bbZ).$$
	A similar argument works for a larger family of number rings:
	\begin{mainthm}\label{mainthm:cohomology}
		Let $\cO$ be a number ring, and let $I$ be a nonzero ideal of $\cO$. Let $\nu$ be the cohomological dimension of $\SL_n(\cO)$. Then, 
		\[ \dim\HH^{\nu}(\Gamma_n(I);\bbQ)\geq \rk \St_n(\cO/I). \]
	\end{mainthm}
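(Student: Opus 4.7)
The plan is to produce a $\Gamma_n(I)$-equivariant surjection $\phi_*\colon\St_n(K)\twoheadrightarrow\St_n(\cO/I)$ with $\Gamma_n(I)$ acting trivially on the target. Since Borel--Serre duality identifies $\HH^{\nu}(\Gamma_n(I);\bbQ)$ with $(\St_n(K))_{\Gamma_n(I)}$, such a surjection factors through the coinvariants and yields $\dim\HH^{\nu}(\Gamma_n(I);\bbQ)\geq\rk\St_n(\cO/I)$.

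I would construct this map at the level of Tits complexes, directly generalizing the reduction map $\cT_n(\bbQ)\to\cT_n(\bbF_p)$ recalled in the introduction. Define $\phi\colon\cT_n(K)\to\cT_n(\cO/I)$ on vertices by sending a subspace $V\subsetneq K^n$ to the image of $V\cap\cO^n$ in $(\cO/I)^n$. For such a $V$, the submodule $V\cap\cO^n$ is saturated in $\cO^n$, so the quotient $\cO^n/(V\cap\cO^n)$ is torsion-free and hence projective over the Dedekind domain $\cO$. This makes $V\cap\cO^n$ a direct summand of $\cO^n$; tensoring with $\cO/I$ preserves the splitting, and both factors are finitely generated projective $\cO/I$-modules of constant rank over the semilocal ring $\cO/I$, hence free. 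A strict inclusion $V_0\subsetneq V_1$ in $K^n$ remains rank-strict after $\otimes_\cO(\cO/I)$, so $\phi$ is simplicial and lands in $\cT_n(\cO/I)$.

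Next I would show $\phi_*$ is surjective. Theorem~\ref{mainthm:AptsGen} applied to the finite ring $\cO/I$ reduces the problem to hitting every apartment class $[\bar v_1\mid\cdots\mid\bar v_n]$. After rescaling one $\bar v_i$ by a unit --- which does not change the apartment class --- the basis matrix has determinant one, and the classical surjection $\SL_n(\cO)\twoheadrightarrow\SL_n(\cO/I)$ lifts it to an $\cO$-basis $\{v_1,\dots,v_n\}$ of $\cO^n$. Since this is simultaneously a $K$-basis of $K^n$, the apartment class $[v_1\mid\cdots\mid v_n]\in\St_n(K)$ satisfies $\phi_*[v_1\mid\cdots\mid v_n]=[\bar v_1\mid\cdots\mid\bar v_n]$ by direct computation. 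Finally, $\Gamma_n(I)$ acts trivially on $(\cO/I)^n$ and hence on $\St_n(\cO/I)$, so $\phi_*$ descends to the coinvariants and Borel--Serre gives the bound.

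The main obstacle is verifying the well-definedness of $\phi$ on vertices: that the image of $V\cap\cO^n$ in $(\cO/I)^n$ really is a free direct summand with free cokernel. This requires combining saturation in $\cO^n$, flatness of the projective quotient, and the fact that finitely generated projective modules of constant rank over the semilocal ring $\cO/I$ are free. Everything else --- lifting bases, factoring through coinvariants, and applying Borel--Serre --- is essentially formal once $\phi$ is in hand.
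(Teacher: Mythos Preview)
Your proof is correct and follows the same overall architecture as the paper's: Borel--Serre reduces the problem to producing a surjective equivariant map $\St_n(K)\to\St_n(\cO/I)$, and surjectivity is checked on apartment classes via Theorem~\ref{mainthm:AptsGen} together with the surjection $\SL_n(\cO)\twoheadrightarrow\SL_n(\cO/I)$. The one genuine difference is in how the map of Tits complexes is built. The paper first passes to a localization $\cO_S$ with $\Pic(\cO_S)=1$, invokes Theorem~\ref{thm:localTits} to identify $\cT_n(K)\cong\cT_n(\cO_S)$, and then uses the functoriality of $\cT_n$ along the ring map $\cO_S\to\cO_S/I\cO_S\cong\cO/I$; this lets the simplicial structure come for free. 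You instead construct the map directly as $V\mapsto(V\cap\cO^n)\otimes_\cO(\cO/I)$ and verify well-definedness by hand, using that finitely generated projectives of constant rank over the semilocal ring $\cO/I$ are free. The two constructions yield the same map, and your route is slightly more elementary in that it bypasses Theorem~\ref{thm:localTits}.

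One small point to tighten: saying the inclusion ``remains rank-strict'' is not quite enough to land in $\cT_n(\cO/I)$, since the poset relation there is ``cofree summand of'', not mere inclusion. You should note that for $V_0\subsetneq V_1$ the module $(V_1\cap\cO^n)/(V_0\cap\cO^n)$ is torsion-free, hence projective, so the splitting persists after tensoring and the quotient becomes free over $\cO/I$ by the same semilocal argument you already used for vertices.
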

	
	Therefore, Theorem \ref{recdim} computes a lower bound on the top-dimensional cohomology of $\Gamma_n(I)$. This result is new, except when $\cO = \bbZ$ and $I$ is of the form $p^k\bbZ$ for some prime $p$. In this case, our bound on $\dim \HH^{\binom{n}{2}}(\Gamma_n(p^k);\bbQ)$ is weaker than the bounds in \cite{MPP} or \cite{Schwermer}.
	\begin{rem}
		We do not expect that this bound is tight in general. Indeed, we expect that the methods of \cite{MPP} could be applied to produce a stronger lower bound. The author intends to take up this question in a subsequent paper.
	\end{rem}
	In \S6, we investigate the representation-theoretic structure of $\St_n(R)\otimes \bbQ$ (which we write as $\St_n^\bbQ(R)$). If $R$ is not a field, then $\St^\bbQ_n(R)$ is never irreducible. Indeed, for any ideal $I$ we have a nonzero map
	$$\St^\bbQ_n(R)\to \St^\bbQ_n(R/I),$$
	whose kernel is a proper subrepresentation (c.f. Theorem \ref{SteinReducible}). When $R$ is a finite ring, we can identify $\St^\bbQ_n(R/I)$ with the $\Gamma_n(I)$-invariants of $\St^\bbQ_n(R)$:
	\begin{mainthm}\label{mainthm:coinvariants}
		Let $R$ be a finite ring, $\cJ$ its Jacobson radical, and let $I$ be an ideal of $R$ contained in $\cJ$. Then
		$$\St_n^\bbQ(R)^{\Gamma(I)} \cong \St_n^\bbQ(R/I)$$ as $\GL_n(R)$-representations.
	\end{mainthm}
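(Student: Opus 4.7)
The strategy is to identify $\cT_n(R/I)$ with the quotient simplicial complex $\cT_n(R)/\Gamma_n(I)$ and then pass to top homology, using that invariants and coinvariants of finite group actions agree over $\bbQ$. First note that $R$ is finite, hence semilocal, so by Nakayama an element of $R$ is a unit if and only if its reduction in $R/I$ is a unit. Taking determinants, this shows $\GL_n(R) \to \GL_n(R/I)$ is surjective with kernel $\Gamma_n(I)$; in particular $\Gamma_n(I)$ is a finite normal subgroup of $\GL_n(R)$.

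The hard part is the simplicial isomorphism $\cT_n(R)/\Gamma_n(I)\cong \cT_n(R/I)$ induced by the reduction $V\mapsto V/IV$. The inclusion of $\Gamma_n(I)$-orbits into fibers of reduction is clear. For surjectivity on simplices and for the converse inclusion, the basic tool is iterated Nakayama-style lifting: a chosen adapted basis of $(R/I)^n$ for a complete flag can be lifted step by step, using that each $V_i$ in a flag $V_\bullet$ is free with $V_i/IV_i = \bar V_i$, to an adapted basis of $R^n$ for any prescribed lift of that complete flag. Concretely, given two flags $V_\bullet, V'_\bullet$ in $\cT_n(R)$ with $\bar V_\bullet = \bar V'_\bullet$, I would extend both to complete flags $V^+_\bullet$ and ${V'}^+_\bullet$ in $R^n$ whose reductions agree on a common complete extension $\bar V^+_\bullet$ of $\bar V_\bullet$ (the new entries of $\bar V^+_\bullet$ lift to summands sandwiched between consecutive terms of $V_\bullet$ or $V'_\bullet$ by the same Nakayama argument applied inside each quotient $V_{i+1}/V_i$); choose an adapted basis $(\bar u_j)$ of $(R/I)^n$ for $\bar V^+_\bullet$; and lift it to bases $(u_j)$ and $(u'_j)$ of $R^n$ adapted to $V^+_\bullet$ and ${V'}^+_\bullet$ respectively, each with reduction $(\bar u_j)$. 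Then $g \in \GL_n(R)$ defined by $g(u'_j) = u_j$ satisfies $\bar g = \id$ and sends each $V'_i$ to $V_i$, so $g \in \Gamma_n(I)$ realizes the desired equivalence.

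To conclude, the $\Gamma_n(I)$-action on $\cT_n(R)$ is admissible: any element stabilizing a simplex must fix each of its vertices individually, since the summands in a flag have distinct ranks. Hence the simplicial chain complex of the quotient equals the $\Gamma_n(I)$-coinvariants of $C_*(\cT_n(R); \bbQ)$. Since $\Gamma_n(I)$ is finite and we work over $\bbQ$, averaging identifies coinvariants with invariants, and normality of $\Gamma_n(I)$ in $\GL_n(R)$ makes this identification $\GL_n(R)$-equivariant. Exactness of $\Gamma_n(I)$-invariants over $\bbQ$ then yields
$$\St_n^\bbQ(R/I) = H_{n-2}(\cT_n(R/I); \bbQ) \cong H_{n-2}(\cT_n(R); \bbQ)^{\Gamma_n(I)} = \St_n^\bbQ(R)^{\Gamma_n(I)},$$
which is the asserted isomorphism of $\GL_n(R)$-representations.
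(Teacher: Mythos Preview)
Your proof is correct and follows the same overall arc as the paper's: identify $C_*(\cT_n(R/I);\bbQ)$ with the $\Gamma_n(I)$-invariants of $C_*(\cT_n(R);\bbQ)$, then use exactness of invariants for a finite group in characteristic zero to pass to homology. The difference lies only in how the chain-level identification is established. You build the simplicial isomorphism $\cT_n(R)/\Gamma_n(I)\cong\cT_n(R/I)$ by hand, lifting adapted bases via Nakayama and extending both flags to complete flags with a common reduction. The paper instead works purely with permutation representations: since $\GL_n(R)$ acts transitively on good flags of each type $\lambda$ (Corollary~\ref{flagTrans}), one has $\tilde C_q(\cT_n(R);\bbQ)\cong\bigoplus_\lambda \bbQ[\GL_n(R)/P_\lambda(R)]$, and taking $\Gamma_n(I)$-invariants reduces to the double-coset identification $\Gamma_n(I)\backslash\GL_n(R)/P_\lambda(R)\cong\GL_n(R/I)/P_\lambda(R/I)$, which follows once $\GL_n(R)\to\GL_n(R/I)$ and $P_\lambda(R)\to P_\lambda(R/I)$ are seen to be surjective (both immediate from $I\subseteq\cJ$). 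This coset formulation replaces your explicit flag-lifting and the admissibility discussion with a one-line appeal to transitivity; your version is more geometric and makes the quotient complex visible, while the paper's is terser and sidesteps the coinvariants-versus-quotient-complex step entirely.
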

	Notice that, in this case, the $\Gamma_n(I)$-coinvariants agree with the $\Gamma_n(I)$-invariants. To conclude, we ask the following question:
	\begin{quest}
		For a finite local ring $R$, what is the length of $\St^\bbQ_n(R)$ as a $\GL_n(R)$-representation?
	\end{quest}
	For quotients of Dedekind domains, we answer the question when $n=2$.
	\begin{mainthm}\label{mainthm:length2}
		Let $\cO$ be a Dedekind domain, and $\fp\leq \cO$ a prime ideal. Then, $\St_2^\bbQ(\cO/\fp^k)$ has length exactly $k$.
	\end{mainthm}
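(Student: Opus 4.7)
The plan is to establish length $\geq k$ using functoriality and apartment generation, and length $\leq k$ using a Gelfand-pair argument over $\bbC$. Write $R_j := \cO/\fp^j$, $G_j := \GL_2(R_j)$, and $B_j$ for the Borel of upper-triangular matrices. Since $\cT_2(R_j)$ is zero-dimensional with vertex set $\bbP^1(R_j) = G_j/B_j$, the module $\St_2^\bbQ(R_j)$ is the augmentation kernel of $\bbQ[\bbP^1(R_j)] = \Ind_{B_j}^{G_j}\bbQ$. Since $\cO/\fp^k = \cO_\fp/\fp^k\cO_\fp$, we may localize and assume $\cO$ is a discrete valuation ring with uniformizer $\pi$ and residue field $\kappa$.

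\emph{Lower bound.} The quotient maps $R_k \twoheadrightarrow R_j$ induce maps $\St_2^\bbQ(R_k) \to \St_2^\bbQ(R_j)$, which are surjective by Theorem \ref{mainthm:AptsGen} as they carry apartment classes to apartment classes. Their kernels form a chain $0 = W_k \subsetneq W_{k-1} \subsetneq \cdots \subsetneq W_0 = \St_2^\bbQ(R_k)$ which is strict because $\dim \St_2^\bbQ(R_j)$ strictly increases with $j$, yielding length $\geq k$.

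\emph{Upper bound.} Since length over $\bbQ$ is bounded above by length after base change to $\bbC$, it suffices to show $\bbC[\bbP^1(R_k)]$ has exactly $k+1$ distinct irreducible constituents. A direct calculation using the $\pi$-adic valuation shows that the $B_k$-orbits on $\bbP^1(R_k)$ are $\{[1\!:\!0]\}$, the open orbit $\{[c\!:\!1] : c \in R_k\}$, and for each $1 \leq j \leq k-1$ the orbit $\{[1\!:\!u\pi^j] : u \in R_k^\times\}$; thus $|B_k \backslash G_k / B_k| = k+1$. The anti-involution $\sigma(g) := w g^T w^{-1}$ with $w = \left(\begin{smallmatrix} 0 & 1 \\ 1 & 0 \end{smallmatrix}\right)$ preserves $B_k$, and $\sigma$ fixes each of the representatives $I$, $w$, and $\left(\begin{smallmatrix} 1 & 0 \\ \pi^j & 1 \end{smallmatrix}\right)$, so it stabilizes every double coset. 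By Gelfand's trick the Hecke algebra $\End_{G_k}(\bbC[\bbP^1(R_k)])$ is commutative; since $G_k$ is finite (when $\kappa$ is finite), Maschke's theorem gives semisimplicity, so $\bbC[\bbP^1(R_k)]$ is multiplicity-free with exactly $k+1$ distinct irreducible summands. Hence $\St_2^\bbC(R_k)$ has length $k$, and $\St_2^\bbQ(R_k)$ has length $\leq k$.

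The main obstacle lies in the case of infinite residue field, where $G_k$ is not finite and Maschke's theorem does not apply directly. In that setting one would replace the Gelfand-pair step with a Clifford-theoretic analysis: the abelian normal subgroup $\Gamma_2(\fp^{j-1}/\fp^j) \cong (M_2(\kappa),+)$ acts on each successive subquotient of the filtration via characters forming a single $\GL_2(\kappa)$-orbit (the nonzero rank-one nilpotents under the trace pairing), which together with the Putman--Snowden irreducibility result \cite{SteinIrred} for the base case $j=1$ yields irreducibility of each subquotient.
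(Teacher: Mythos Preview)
Your argument is correct in the finite residue field case, which is exactly the case the paper proves (the paper establishes the result for \emph{finite} uniserial rings and invokes Proposition~\ref{prop:DedekindUniserial}). Your lower bound via the chain of kernels is essentially equivalent to the paper's use of Theorem~\ref{mainthm:coinvariants}; both amount to noting that the surjections $\St_2^\bbQ(R_k)\twoheadrightarrow\St_2^\bbQ(R_j)$ have strictly growing targets.

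The upper bounds differ in an interesting way. The paper counts $\GL_2$-orbits on $\bbP^1\times\bbP^1$ directly (parametrizing a pair of lines by the ideal $L_1\cap L_2$, which by uniseriality takes $k+1$ values), deducing $\dim\End_{\GL_2}(K[\bbP^1])=k+1$ and hence length $\leq k+1$; multiplicity-freeness then follows \emph{a posteriori} from the matching lower bound. You instead count $B_k$-double cosets (the same number, via the standard bijection) and apply Gelfand's trick with the anti-involution $g\mapsto wg^Tw^{-1}$ to show the Hecke algebra is commutative, obtaining multiplicity-freeness directly and hence length exactly $k+1$ for $\bbC[\bbP^1]$ without appealing to the lower bound. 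This is a genuine alternative: it makes the upper bound self-contained, at the cost of checking the anti-involution fixes each double coset. The paper's direct orbit count is perhaps more elementary but needs both bounds to match before concluding multiplicity-freeness.

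One small point: your lower bound invokes ``$\dim\St_2^\bbQ(R_j)$ strictly increases,'' which is fine for finite $\kappa$ but meaningless when $\kappa$ is infinite. Since you explicitly defer the infinite case to a separate Clifford-theoretic sketch anyway, this is not a real gap in the stated scope; but if you wanted the lower bound to stand on its own for all $\kappa$, you could instead observe that the class $\eta$ from the proof of Theorem~\ref{mainthm:UTApts} (built with $m\in\fp^{j-1}\setminus\fp^j$) lies in $W_{j-1}\setminus W_j$. Your Clifford-theory outline for the infinite case is plausible but goes beyond what the paper proves.
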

	
	This is the minimum length allowed by Theorem \ref{mainthm:coinvariants}.
	\begin{table}
		\begin{footnotesize}
			\begin{tabular}{|c||r|r|r|r|r|}
				\hline
				& \multicolumn{5}{|c|}{$d$}\\
				\hline $n$ & 4 & 6 & 8 & 9 & 10 \\
				\hline 1& 1 & 1 & 1 & 1 & 1 \\
				2& 5 & 11 & 11 & 11 & 17 \\
				3& 113 & 911 & 1121 & 1171 & 3473  \\
				4& 10879 & 497149 & 978559 & 1149929 & 7649589  \\
				5& 4324129 & 1696007149 & 7061119489 & 10247219929 & 174326656989\\
				6 & 6984271295 & 35372169269639 & 414187232163839 & 824092678295459 & 40378418645294393 \\
				\hline
			\end{tabular}
		\end{footnotesize}
		\caption{Rank of $\St_n(\bbZ/d\bbZ)$, for composite $d\leq 10$.}
		\label{table:SteinDim}
	\end{table}
	\subsubsection{Acknowledgments}
	I would like to thank my advisor Andrew Putman for introducing me to this problem, and for his guidance during every stage of this project. I would also like to thank Samuel Evens for helpful conversations about the representation theory of $\GL_n(\bbF_q)$. I would like to thank Alexander Kupers and Jeremy Miller for pointing out some useful references. 
	
	This article is based upon work supported by the National Science Foundation under Grant Number DMS-1547292.
	\section{Algebraic Preliminaries}
	Here, we collect some algebraic tools and definitions that will be used throughout the paper.
	Throughout the rest of this paper, $R$ will always be a commutative ring. 
	
	\subsection{Flags of Free Summands}\label{Section:flags}\;
	
	We study the collections of flags which are important to the construction of $\cT_n(R)$.
	\begin{defn}
		A \emph{flag} in $R^n$ is a chain of free direct summands $$0\subsetneq V_1\subsetneq\dots\subsetneq V_k\subsetneq R^n$$
		For a partition $\lambda = (\lambda_1,\dots,\lambda_{k+1})$ of $n$, we say that a flag has \emph{type} $\lambda$ if $$\rk V_j = \sum_{i=1}^j \lambda_i \text{ for }j\leq k.$$
		A flag is \emph{complete} if $k=n-1$, in which case $V_i$ has rank $i$ (equivalently, these are the flags of type $(1,1,\dots,1)$).
	\end{defn}
	Notice that when $K$ is a field, any flag in $K^n$ is a subflag of a complete flag. This makes $\cT_n(K)$ a pure simplicial complex: every simplex is contained in a simplex of maximum dimension. This is a desirable property for the Tits complex to have, since every Cohen-Macaulay complex is pure. We make the following definition:
	\begin{defn}
		We say that a flag $\cF = \{0\subsetneq V_1\subsetneq\dots\subsetneq V_k\subsetneq R^n\}$ is \emph{good} if it can be completed, i.e., if we can find a complete flag $\{0\subsetneq W_1\subsetneq\dots \subsetneq W_{n-1}\subsetneq R^n\}$ and indices $j_i$ with $V_i = W_{j_i}$ for all $i\leq k$. 
	\end{defn}
	
	One can produce good flags using the following procedure:
	\begin{defn}
		Let $\lambda$ be a partition of $n$ and let $(v_1,\dots, v_n)$ be an ordered basis of $R^n$. We write $\Span_\lambda (v_1,\dots, v_n)$ for the flag \begin{align*}
			0\subsetneq \Span\{v_1,\dots, v_{\lambda_1}\} \subsetneq \Span\{v_1&,\dots, v_{\lambda_1+\lambda_2}\}\subsetneq\dots \\ \dots &\subsetneq \Span\{v_1,\dots, v_{\lambda_1+\dots+\lambda_{k-1}}\}\subsetneq R^n
		\end{align*}
		If $\cF$ is a flag such that $\cF = \Span_\lambda(v_1,\dots, v_n)$, we say that $(v_1,\dots, v_n)$ is a \emph{flag-basis} for $\cF$.
	\end{defn}
	It turns out that all good flags arise in this manner. To prove this, we need a definition and a lemma:
	\begin{defn}
		Let $M$ be an $R$-module and $N$ be a direct summand of $M$. We say that $N$ is \emph{cofree} in $M$ if it has a free complement, i.e.\ there is a free direct summand $N'$ of $M$ with $ N\oplus N'= M$. Equivalently, $N$ is cofree if $M/N$ is free.
	\end{defn}
	
	\begin{lem}\label{codim1free}
		Any free summand of $R^n$ of rank $(n-1)$ is cofree.
	\end{lem}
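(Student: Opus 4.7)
Let $V$ be a free summand of $R^n$ of rank $n-1$, and choose any complement, so that we have a decomposition $R^n = V \oplus W$ for some submodule $W$. Since $W$ is a direct summand of a free module, it is projective. The plan is to show $W \cong R$ by a determinant/exterior algebra computation; this will exhibit a free complement of $V$ and prove cofreeness.

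The key input is the standard decomposition of the exterior algebra of a direct sum: for any two $R$-modules $A,B$ and any $n\geq 0$,
\[ \Lambda^n(A\oplus B) \;\cong\; \bigoplus_{i+j=n} \Lambda^i A \otimes_R \Lambda^j B. \]
Applying this to $V\oplus W$ with the total degree $n$ and using that $V$ is free of rank $n-1$, so that $\Lambda^i V = 0$ whenever $i\geq n$, we see that the only surviving summand on the right-hand side is the one with $i=n-1$ and $j=1$. Therefore
\[ R \;\cong\; \Lambda^n(R^n) \;\cong\; \Lambda^{n-1}V \otimes_R \Lambda^1 W \;\cong\; R \otimes_R W \;\cong\; W, \]
where I have used $\Lambda^{n-1}(R^{n-1})\cong R$ and $\Lambda^1 W = W$. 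This shows $W$ is free of rank $1$, so $V$ is cofree.

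There is essentially no obstacle here; the only thing one needs to be careful about is that this argument works over an arbitrary commutative ring, which it does because the exterior algebra decomposition above is natural and holds integrally. A slightly more elementary alternative would be to pick a basis $v_1,\dots,v_{n-1}$ of $V$ together with any element $w\in W$ whose image generates $W$ modulo a maximal ideal; but verifying that a single element suffices to generate $W$ effectively amounts to the same determinant computation, so the exterior algebra argument is cleanest.
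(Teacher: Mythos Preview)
Your approach is the same as the paper's --- a determinant/exterior algebra computation --- but there is a genuine gap in your execution.

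You claim that ``the only surviving summand on the right-hand side is the one with $i=n-1$ and $j=1$'', and you justify this by noting that $\Lambda^i V = 0$ for $i\geq n$. But that observation only kills the single term $i=n$, $j=0$. It does \emph{not} kill the terms with $i\leq n-2$ and $j\geq 2$: for such $i$, $\Lambda^i V$ is free of rank $\binom{n-1}{i} > 0$, so $\Lambda^i V \otimes_R \Lambda^j W$ is a direct sum of copies of $\Lambda^j W$, and you have said nothing about why $\Lambda^j W$ should vanish for $j\geq 2$. At this stage all you know about $W$ is that it is finitely generated projective; you do not yet know it has rank $1$ (that is what you are trying to prove), so you cannot simply assert $\Lambda^2 W = 0$.

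The paper supplies exactly the missing step: one checks $\Lambda^j W = 0$ for $j\geq 2$ by localizing. For any prime $\fp$, the module $W_\fp$ is projective over the local ring $R_\fp$, hence free; comparing ranks in $V_\fp \oplus W_\fp \cong R_\fp^n$ forces $W_\fp$ to be free of rank $1$, so $(\Lambda^j W)_\fp \cong \Lambda^j(W_\fp) = 0$ for $j\geq 2$. Since this holds at every prime, $\Lambda^j W = 0$. Once that is established, your displayed chain of isomorphisms is correct and the proof concludes as you wrote.
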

	\begin{proof}
		Let $N$ be such a free summand, and let $L$ be its complement. Consider the determinant construction: 
		$$\textstyle R \cong \bigwedge^n R^n \cong \bigwedge^n (N \oplus L) \cong \displaystyle \bigoplus_{i = 0}^n\textstyle (\bigwedge^i N)\tensor (\bigwedge^{n-i} L).$$ 
		We will show that $\bigwedge^j L = 0$ when $j\geq 2$, so that each term of the right hand side is zero except for $i = n-1$. It follows that the right hand side is $L\otimes R \cong L$, and so $L\cong R$ as required.
		
		Let $\fp$ be a prime of $R$. By exactness of localization and the fact that all projective modules over a local ring are free, $L_\fp$ is free of rank 1 over $R_\fp$.
		If $j\geq 2$, we see that $(\bigwedge^j L)_\fp \cong \bigwedge^j (L_\fp) = 0$. Since $\fp$ is arbitrary, $\bigwedge^j L$ has empty support, and so is the zero module. 
	\end{proof}
	\begin{prop}\label{prop:goodFlags}
		For a flag $\cF = \{V_1\subsetneq\dots\subsetneq V_k\}$ of type $\lambda$, the following are equivalent:
		\begin{enumerate}
			\item $\cF$ is good.
			\item $\cF$ admits a flag-basis.
			\item $V_i$ is cofree in $V_{i+1}$ for all $i<k$, and $V_k$ is cofree in $R^n$.
		\end{enumerate}
	\end{prop}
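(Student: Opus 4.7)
The plan is to prove the cyclic chain of implications $(2) \Rightarrow (1) \Rightarrow (3) \Rightarrow (2)$. The direction $(2) \Rightarrow (1)$ is essentially immediate: given a flag-basis $(v_1, \dots, v_n)$, setting $W_j := \Span\{v_1, \dots, v_j\}$ for $j = 1, \dots, n-1$ produces a complete flag---each $W_j$ is a free direct summand with free complement $\Span\{v_{j+1}, \dots, v_n\}$---and the equalities $V_i = W_{\lambda_1 + \cdots + \lambda_i}$ exhibit $\cF$ as a subflag. For $(3) \Rightarrow (2)$, I would build a flag-basis inductively: starting from any basis of $V_1$, at each step cofreeness of $V_i$ in $V_{i+1}$ gives a decomposition $V_{i+1} = V_i \oplus C_i$ with $C_i$ free of rank $\lambda_{i+1}$, so appending a basis of $C_i$ extends the chosen basis; iterating up to $V_k$ and then using cofreeness of $V_k$ in $R^n$ produces the required flag-basis.

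The heart of the proof is $(1) \Rightarrow (3)$, which I would carry out in two stages. First, as a preliminary, I would verify that every consecutive pair $W_j \subsetneq W_{j+1}$ in a complete flag is cofree: writing $R^n = W_j \oplus L$ and projecting to $W_j$ yields $W_{j+1} = W_j \oplus (W_{j+1} \cap L)$, so $W_j$ is a rank-$j$ free direct summand of $W_{j+1} \cong R^{j+1}$; applying Lemma \ref{codim1free} \emph{internally} to $W_{j+1}$ then forces $W_{j+1}/W_j$ to be free of rank one. Now, given $\cF$ good with $V_i = W_{j_i}$, the same projection trick makes $V_i$ a direct summand of $V_{i+1}$, and freeness of $V_{i+1}/V_i = W_{j_{i+1}}/W_{j_i}$ would follow by induction on $\ell = j_{i+1} - j_i$: the short exact sequence
\[ 0 \to W_{j_{i+1}-1}/W_{j_i} \to W_{j_{i+1}}/W_{j_i} \to W_{j_{i+1}}/W_{j_{i+1}-1} \to 0 \]
has free outer terms (by induction and the preliminary) and splits because its right-hand term is free, making the middle free. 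Cofreeness of $V_k$ in $R^n$ is handled identically by adjoining $R^n$ at the top of the extended chain, using Lemma \ref{codim1free} one last time to see that $R^n/W_{n-1}$ is free.

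The main obstacle is the promotion from projectivity to freeness of these successive quotients. A direct summand of a free module is only automatically cofree in the codimension-one case isolated by Lemma \ref{codim1free}, so this strength must be leveraged one step at a time along a complete flag; there is no shortcut through a direct rank-$k$ statement, which is why both the preliminary step and the induction are necessary.
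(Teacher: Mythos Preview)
Your proof is correct and uses the same key ingredient as the paper---Lemma~\ref{codim1free} applied step by step along a complete flag---but the organization differs. The paper proves $(1)\Rightarrow(2)$ rather than $(1)\Rightarrow(3)$: once the complete flag $\cF'$ extending $\cF$ is seen to satisfy condition~(3) (each consecutive step being codimension one), the already-established implication $(3)\Rightarrow(2)$ produces a flag-basis $(v_1,\dots,v_n)$ for $\cF'$, and this same ordered basis is automatically a flag-basis for $\cF$ with respect to the coarser partition $\lambda$. This bootstrap sidesteps your induction on $\ell$ via split short exact sequences, making the argument a line or two shorter. On the other hand, you are explicit about the projection step showing $W_j$ is a direct summand of $W_{j+1}$ (needed before Lemma~\ref{codim1free} can be invoked inside $W_{j+1}$), which the paper leaves implicit; that is a small expository gain on your side.
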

	\begin{proof}
		$(2) \implies (1)$ and $(2)\implies (3)$ are both immediate. 
		
		$(3)\implies (2)$: Build the flag-basis inductively: let the first $\lambda_1$ vectors be any basis for $V_1$, the next $\lambda_2$ be any basis for a free complement of $V_1$ inside $V_2$, and so on.
		
		$(1)\implies (2)$: Let $\cF'$ be a complete flag containing $\cF$. By Lemma \ref{codim1free}, $\cF'$ satisfies condition (3), and admits a flag basis $(v_1,\dots, v_n)$. Therefore: $$\cF' = \Span_{(1,\dots,1)} (v_1,\dots, v_n).$$ Then, by the definition of flag-bases, $\cF = \Span_\lambda (v_1,\dots, v_n)$.
	\end{proof}
	Notice that $\GL_n(R)$ acts transitively on the set of flag-bases, and that if $u\in R^\times$, the matrix $\begin{bmatrix} u & 0 \\ 0 & \id_{n-1} \end{bmatrix}$ stabilizes the standard flag of type $\lambda$. It follows that:
	\begin{cor}\label{flagTrans}
		$\GL_n(R)$ and $\SL_n(R)$ each act transitively on the set $\Fl_\lambda$ of good flags of type $\lambda$.
	\end{cor}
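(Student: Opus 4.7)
The plan is to deduce this directly from Proposition \ref{prop:goodFlags} together with the two observations already recorded in the remark preceding the corollary. First I would handle $\GL_n(R)$: any ordered $R$-basis $(v_1,\dots,v_n)$ of $R^n$ is the image of the standard basis under the matrix whose columns are the $v_i$, and this matrix lies in $\GL_n(R)$ precisely because $(v_1,\dots,v_n)$ is a basis. Hence $\GL_n(R)$ acts transitively on the set of ordered bases, and the map $(v_1,\dots,v_n)\mapsto \Span_\lambda(v_1,\dots,v_n)$ is manifestly $\GL_n(R)$-equivariant. By the equivalence $(1)\Leftrightarrow (2)$ in Proposition \ref{prop:goodFlags}, this map is surjective onto $\Fl_\lambda$, so transitivity on ordered bases descends to transitivity on $\Fl_\lambda$.

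For $\SL_n(R)$ I would adjust a chosen $\GL_n(R)$-transport by an element of the stabilizer of the standard flag $\cF_0$ of type $\lambda$. Since $\lambda_1\geq 1$, the matrix $D_u = \begin{bmatrix} u & 0 \\ 0 & \id_{n-1}\end{bmatrix}$ sends $e_1$ to $ue_1$ and fixes $e_2,\dots,e_n$, so it stabilizes every $V_j$ in $\cF_0$. Given any $\cF\in\Fl_\lambda$, pick $g\in\GL_n(R)$ with $g\cF_0=\cF$ and set $u = \det(g)^{-1}\in R^\times$; then $gD_u\in\SL_n(R)$ and $gD_u\cF_0 = g\cF_0 = \cF$. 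Since $\cF$ was arbitrary, $\SL_n(R)$ acts transitively on $\Fl_\lambda$.

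There is no serious obstacle here: the entire argument is just the combination of Proposition \ref{prop:goodFlags}, the standard fact that $\GL_n(R)$ is simply transitive on ordered bases, and the determinant-adjustment via $D_u$. The only mild care needed is to note that $\lambda_1\geq 1$ (which is automatic for any partition of $n\geq 1$) so that $D_u$ genuinely lies in the stabilizer of $\cF_0$.
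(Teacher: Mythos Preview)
Your proposal is correct and follows essentially the same approach as the paper: the paper's proof consists precisely of the observation (stated immediately before the corollary) that $\GL_n(R)$ acts transitively on flag-bases, together with the fact that $\begin{bmatrix} u & 0 \\ 0 & \id_{n-1}\end{bmatrix}$ stabilizes the standard flag of type $\lambda$, which is exactly what you use. Your write-up simply makes the deduction from Proposition~\ref{prop:goodFlags} and the determinant adjustment explicit.
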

	\begin{exmp}[A non-good flag]\label{exmp:nongood}
		Set $R = \bbR[x,y,z]/(x^2+y^2+z^2 -1)$, the ring of polynomial functions on $S^2$, and consider the flag:
		$$0\subsetneq R(xe_1+ye_2+ze_3) \subsetneq Re_1\oplus Re_2\oplus Re_3 \subsetneq R^4.$$
		Each term in this flag is free and cofree in $R^4$. Every projective module over $R$ induces a vector bundle over $S^2$. One can check that the flag above induces the flag of bundles:
		$$0\subsetneq \nu(S^2)\subsetneq T\bbR^3|_{S^2}\subsetneq T\bbR^4|_{S^2}$$
		where $\nu(S^2)\subsetneq T\bbR^3|_{S^2}$ is the standard inclusion of the normal bundle of $S^2$ into $T\bbR^3$. If the original flag of $R$-modules is a good flag, then $\nu(S^2)$ will be cofree in $T\bbR^3|_{S^2}$. This is well-known to be false.
	\end{exmp}

	\subsection{Bass' Stable Range Condition}\label{section:Bass}\;
	
	We discuss a family of conditions on a ring which generalize the condition $SR_2$ defined in the introduction.
	
	\begin{defn}
		Let $R$ be a ring, and let $n\geq 2$. We say that $R$ satisfies \emph{Bass' stable range condition} $SR_n$ if for every subset $\{r_1,\dots, r_n\}$ of $R$ generating the unit ideal, there is a sequence of elements $t_1,\dots, t_{n-1}$ such that $\{r_1-t_1r_n, \dots, r_{n-1}-t_{n-1}r_n\}$ also generates the unit ideal. 
	\end{defn}
	
	Notice that a ring satisfying $SR_n$ necessarily satisfies $SR_{n+1}$. These conditions are difficult to verify in general, but the following theorem of Bass gives many examples:
	
	\begin{thm}[Bass Cancellation Theorem, {\cite[Theorem V.3.5]{BassAlgKThry}}]
		If $R$ is a commutative ring of Krull dimension $d$, then $R$ satisfies $SR_{d+2}$.
	\end{thm}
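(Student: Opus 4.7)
The plan is to prove Bass' theorem by induction on the Krull dimension $d$. I would sketch the argument in the Noetherian case, which is Bass' original setting; the general result, due to Heitmann, requires an additional layer of reduction but follows the same basic outline.

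For the base case $d = 0$, every prime of $R$ is maximal, so $\mathrm{Jac}(R) = \mathrm{Nil}(R)$ and $R/\mathrm{Nil}(R)$ is von Neumann regular. Stable range conditions lift through surjections with nilpotent kernel, so it suffices to verify $SR_2$ for a reduced dimension-zero commutative ring. In such a ring every finitely generated ideal is principal and generated by an idempotent: writing $(b) = (e_b)$ for a unimodular pair $(a, b)$, one splits $R = R e_b \oplus R(1 - e_b)$, in which $b$ is a unit in the first factor and, by unimodularity, $a$ is a unit in the second. Choosing $t$ so that $a - tb$ equals $1$ in the first factor and $0$ in the second produces the required unit.

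For the inductive step, assume the theorem for dimension less than $d$ and let $(r_1, \dots, r_n)$ be unimodular in $R$ with $n \geq d + 2$. I would analyze the problem maximal ideal by maximal ideal: the goal is to pick $t_1, \dots, t_{n-1}$ so that no maximal ideal $\mathfrak{m}$ contains all of $r_i - t_i r_n$ for $i < n$. Maximal ideals $\mathfrak{m}$ containing $r_n$ never obstruct, because unimodularity then forces some $r_i \notin \mathfrak{m}$ with $i < n$. The hard maximal ideals are those not containing $r_n$: at such $\mathfrak{m}$, $r_n$ is a unit mod $\mathfrak{m}$ and the obstruction picks out a unique bad residue $r_i r_n^{-1}$ for each $t_i$. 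One reformulates the search as a unimodularity problem in $R/(r_1, \dots, r_{n-1})$ after inverting $r_n$; this auxiliary ring can be arranged to have Krull dimension at most $d - 1$ by a prime-avoidance preprocessing step that adjusts $r_n$ inside the elementary orbit of the original tuple. The inductive hypothesis, applied with a row of length $n - 1 \geq (d - 1) + 2$, then yields the required residues, which one lifts back to $R$.

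The main obstacle is the prime-avoidance preprocessing: one must show that $r_n$ can be replaced by $r_n + \sum_{i < n} s_i r_i$ lying in no minimal prime of height $d$, so that cutting by this modified last entry really does drop the Krull dimension. This uses the Noetherian hypothesis (finitely many minimal primes) together with the room provided by having $n \geq d + 2$ entries; it is the Eisenbud--Evans ``basic element'' maneuver which lies at the heart of the proof.
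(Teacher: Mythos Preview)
The paper does not prove this theorem; it simply cites it from Bass' book \cite[Theorem V.3.5]{BassAlgKThry} and moves on. There is therefore no ``paper's own proof'' to compare your attempt against. Your decision to sketch the argument is reasonable, and the broad shape---reduce the base case to von Neumann regular rings, then induct on dimension via a basic-element/prime-avoidance step that drops the dimension after adjusting the last coordinate---is the standard route.

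Two remarks on the content of your sketch. First, in the base case you write that one chooses $t$ so that $a - tb$ equals $1$ in the factor $Re_b$ and $0$ in the factor $R(1-e_b)$; that would make $a - tb$ equal to the idempotent $e_b$, which is not a unit. What you want is $a - tb$ equal to $1$ in $Re_b$ (possible since $b$ is a unit there) and equal to $a$ in $R(1-e_b)$ (automatic since $b$ vanishes there), and then $a - tb$ is a unit because $a$ is a unit in the second factor. Second, your inductive step is more of a narrative than an argument: the passage to an auxiliary ring of smaller dimension and the application of the inductive hypothesis there are gestured at but not carried out, and the prime-avoidance step you flag as ``the main obstacle'' is precisely where the work lies. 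As a sketch for a result the paper only cites, this is acceptable, but if you intend it as a proof you would need to make the inductive step precise.
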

	
	Artinian rings have Krull dimension $0$, and thus satisfy $SR_2$. Number rings satisfy $SR_3$, but not $SR_2$. For example, in $\bbZ$, the set $\{5,7\}$ generates the unit ideal, but there is no choice of $t\in \bbZ$ such that $5-7t = \pm 1$. 
	
	\begin{defn}
		Let $R$ be commutative ring, and $\cJ$ its Jacobson radical. We say $R$ is \emph{semilocal} if $R/\cJ$ is a product of fields. Equivalently, $R$ has finitely many maximal ideals.
	\end{defn}
	\begin{prop}\label{prop:semilocalSR2}
		Every semilocal ring satisfies $SR_2$.
	\end{prop}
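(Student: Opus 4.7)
The plan is to use the Chinese Remainder Theorem applied to the finite collection of maximal ideals, together with the observation that an element of $R$ is a unit iff it avoids every maximal ideal. The main task is to exhibit, for each pair $a,b \in R$ generating the unit ideal, a single $t \in R$ such that $a - tb$ lies outside every maximal ideal of $R$, and the relevant constraints on $t$ can be chosen one ideal at a time.

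First I would list the maximal ideals as $\fm_1, \ldots, \fm_k$ (a finite list by semilocality) and split them into two groups according to whether $b \in \fm_i$ or not. For those $\fm_i$ containing $b$, the hypothesis $(a,b) = R$ forces $a \notin \fm_i$, and then $a - tb \equiv a \pmod{\fm_i}$ is a nonzero residue regardless of the choice of $t$. So no constraint on $t$ is imposed by these ideals.

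Next, for each $\fm_i$ with $b \notin \fm_i$, the element $b$ is a unit modulo $\fm_i$, so there exists $c_i \in R$ with $c_i b \equiv a \pmod{\fm_i}$; the condition $a - tb \notin \fm_i$ is then equivalent to $t \not\equiv c_i \pmod{\fm_i}$. Since $R/\fm_i$ is a field (hence has at least two elements), such residues exist; picking one residue $\bar t_i \in R/\fm_i$ other than $\bar c_i$ for each such $i$, the Chinese Remainder Theorem (applied to the pairwise comaximal ideals $\fm_i$ with $b \notin \fm_i$) produces a single $t \in R$ with $t \equiv \bar t_i \pmod{\fm_i}$ for every relevant $i$.

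Finally, I would verify that $a - tb$ lies outside every $\fm_i$, hence outside the Jacobson radical, and therefore outside every maximal ideal of $R$; this means $a - tb$ is a unit, as desired. There is no genuine obstacle here beyond bookkeeping: the argument is essentially a direct CRT computation, and the semilocal hypothesis enters only to ensure that the index set is finite so that CRT applies.
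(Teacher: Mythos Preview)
Your proof is correct and is essentially the same argument as the paper's: the paper first handles the case $R/\cJ \cong F_1\times\dots\times F_k$ coordinatewise and then lifts via $1+\cJ\subseteq R^\times$, which is precisely your CRT argument together with the observation that an element avoiding every maximal ideal is a unit. The only cosmetic difference is that the paper splits on whether $a_i=0$ and makes the explicit choice $t_i\in\{0,1\}$, whereas you split on whether $b\in\fm_i$ and allow any residue $\bar t_i\neq \bar c_i$.
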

	\begin{proof}
		First, we prove the proposition in the special case where $R\cong F_1\times \dots\times F_k$ is a product of fields. Let $\{a,b\}$ generate the unit ideal, where $a = (a_1,\dots, a_k)$ and $b = (b_1,\dots, b_k)$. Since $\{a,b\}$ generates the unit ideal, for any $i$ where $a_i=0$, $b_i\ne 0$. Set $t = (t_1,\dots, t_k)$ where $t_i = 1$ if $a_i=0$, and $t_i = 0$ else. Then, $a-tb$ is nonzero everywhere, and thus a unit of $R$.
		
		We now consider the case where $R$ is a general semilocal ring. If $\cJ$ is the Jacobson radical, then $R/\cJ$ is a product of fields. Let $\{a,b\}$ generate the unit ideal of $R$. Then, their images $\{\overline{a},\overline{b}\}$ generate the unit ideal of $R/\cJ$. Choose $\overline{t}$ as above, so $\overline{a}-\overline{t}\overline{b}$ is a unit $\overline{u}$ of $R/\cJ$. Let $t$ be some lift of $\overline{t}$, and let $u^{-1}$ be a lift of $\overline{u}^{-1}$. Then, $(a-tb)u^{-1}\in 1 + \cJ$, which consists of units, so $a-tb$ is itself a unit.
	\end{proof}
	
	We state an important application of stable range conditions. Recall that a finitely-generated projective $R$-module $P$ has \emph{constant rank} $d$ if, for any maximal ideal $\fm$, the vector space $P\otimes R/\fm$ has dimension $d$.
	\begin{thm}\label{BassCancel}
		Let $R$ satisfy $SR_k$, and let $P$ be a finitely generated $R$-module of constant rank $d\geq k-1$. If $P$ is stably free, then it is free.
	\end{thm}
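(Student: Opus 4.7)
The plan is to reduce the theorem to a transitivity statement for unimodular vectors, and then peel off stabilizing factors one at a time. The key lemma I would prove first is: \emph{if $R$ satisfies $SR_k$ and $n \geq k$, then every unimodular vector in $R^n$ extends to a basis, or equivalently, $\GL_n(R)$ acts transitively on unimodular $n$-vectors.} The proof is by induction on $n$. Given a unimodular $(r_1, \ldots, r_n)$, applying $SR_n$ (which follows from $SR_k$) produces $t_1, \ldots, t_{n-1}$ such that $(r_1 - t_1 r_n, \ldots, r_{n-1} - t_{n-1} r_n)$ is unimodular; pick a witness $s_1, \ldots, s_{n-1}$ with $\sum s_i(r_i - t_i r_n) = 1$. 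A short sequence of row operations, using first the $t_i$ (subtracting $t_i$ times row $n$ from row $i$) and then the $s_i$ (subtracting $s_i r_n$ times row $i$ from row $n$), transforms the column vector into $(r_1', \ldots, r_{n-1}', 0)^T$ with unimodular initial segment, all by an element of $\GL_n(R)$. If $n > k$, apply the inductive hypothesis on the first $n-1$ coordinates to finish; if $n = k$, one further elementary operation using the $s_i$ installs a $1$ in the last position, after which clearing the other entries and a coordinate swap reduce to $(1, 0, \ldots, 0)^T$.

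Second, I would prove the theorem by induction on $m$, where $P \oplus R^m \cong R^{d+m}$. The case $m = 0$ is trivial. For the inductive step, fix an isomorphism $\phi \colon P \oplus R^m \xrightarrow{\sim} R^{d+m}$ and let $q \colon R^{d+m} \to R$ be the composition of $\phi^{-1}$ with the projection onto the last coordinate of $R^m$. Since $q$ is surjective, it corresponds to a unimodular vector $(q(e_1), \ldots, q(e_{d+m}))$ of length $d + m \geq (k-1) + 1 = k$, so the key lemma implies $\ker q$ is free of rank $d + m - 1$. But $\ker q$ also decomposes under $\phi^{-1}$ as $P \oplus R^{m-1}$ (the first $m-1$ coordinates of $R^m$), so we obtain $P \oplus R^{m-1} \cong R^{d+m-1}$, and the inductive hypothesis yields $P \cong R^d$.

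The main obstacle is the base case $n = k$ of the key lemma: the inductive steps (both in $n$ and in $m$) are essentially bookkeeping once transitivity at level $k$ is in hand. All of this is classical, and the entire statement can in fact be cited directly from \cite[Chapter V]{BassAlgKThry}, where Bass establishes precisely this form of cancellation under the hypothesis $SR_k$ on $R$.
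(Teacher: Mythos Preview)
The paper does not prove this theorem; it simply attributes it to Bass and Serre and refers the reader to \cite[Lemma 8.9]{TwistedStability} for an exposition. Your sketch is the standard classical argument and is correct: the key lemma (transitivity of $\GL_n(R)$ on unimodular vectors for $n\geq k$) is precisely what the stable range condition buys, and once that is in hand the induction on $m$, peeling off one free summand at a time via the kernel of a coordinate projection, is routine. Your closing remark that the statement can be cited directly from \cite[Chapter V]{BassAlgKThry} is in fact all the paper does here.
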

	This is essentially due to Bass and Serre. See \cite[Lemma 8.9]{TwistedStability} for a clear exposition.
	
	\subsection{A Presentation for $\SL_n(R)$}\label{Ktheory}\;
	
	The following material is classical, and is drawn from \cite{Milnor} and \cite{KBook}.
	
	To handle the fundamental group of $\cT_n(R)$, we will need to produce and understand a presentation of $\SL_n(R)$.	
	If $i\ne j$ and $\lambda\in R$, the \emph{elementary matrix} $e_{ij}(\lambda)$ is the matrix which has $\lambda$ in the $ij$th position, and which agrees with the identity matrix elsewhere. We denote by $E_n(R)$ the subgroup of $\SL_n(R)$ generated by elementary matrices.
	
	If $R$ is a field or Euclidean domain, then $\SL_n(R)$ is generated by elementary matrices, and so $\SL_n(R) = E_n(R)$. For general rings, this is not the case, and the deficit is measured by the unstable K-theory group $SK_1(n,R)$ (although we will not need this technology here).	We briefly survey the state of affairs for semilocal rings and number rings:
	
	\begin{thm}\label{thm:SR2SK1}
		If $R$ satisfies $SR_2$, then $\SL_n(R)$ is generated by elementary matrices for all $n$.
	\end{thm}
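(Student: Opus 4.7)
The proof proceeds by induction on $n$, reducing an arbitrary $A \in \SL_n(R)$ to the identity via a sequence of elementary row and column operations. The base case $n = 1$ is trivial since $\SL_1(R)$ is the trivial group, so suppose $n \geq 2$ and the result is known for $\SL_{n-1}(R)$.

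The key input beyond what is stated is the classical strengthening of the stable range condition: if $R$ satisfies $SR_2$, then $R$ satisfies $SR_m$ for every $m \geq 2$ (a standard induction due to Bass--Vaserstein which I would include as a short lemma). Given $A = (a_{ij}) \in \SL_n(R)$, cofactor expansion of $\det A = 1$ along the last column shows that $(a_{1n}, \ldots, a_{nn})$ generates the unit ideal. Applying $SR_n$ yields $t_1, \ldots, t_{n-1} \in R$ such that $(a_{1n} - t_1 a_{nn}, \ldots, a_{(n-1)n} - t_{n-1} a_{nn})$ still generates the unit ideal, and this modification is realized by left-multiplying $A$ by $\prod_{i<n} e_{in}(-t_i)$. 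Renaming the new entries of the last column as $(a'_{1n}, \ldots, a'_{(n-1)n}, a_{nn})^T$ and writing $1 = \sum_{i<n} c_i a'_{in}$, left multiplication by $\prod_{i<n} e_{ni}(c_i(1 - a_{nn}))$ replaces the $(n,n)$-entry by $a_{nn} + (1 - a_{nn}) = 1$, and a final left multiplication by $\prod_{i<n} e_{in}(-a'_{in})$ zeroes the rest of the last column.

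At this stage the matrix has the form $\left(\begin{smallmatrix} B & 0 \\ * & 1 \end{smallmatrix}\right)$ with $B \in \SL_{n-1}(R)$. Elementary column operations — right multiplication by $e_{nj}(-*)$ for each $j < n$ — clear the first $n-1$ entries of the last row without disturbing $B$ or the last column, producing $\left(\begin{smallmatrix} B & 0 \\ 0 & 1 \end{smallmatrix}\right)$. The inductive hypothesis gives $B \in E_{n-1}(R)$; including these elementary factors into $\SL_n(R)$ as the upper-left block completes the reduction. All the content of the hypothesis is concentrated in the first step, where $SR_2$ (promoted to $SR_n$) supplies the shortening of the unimodular last column — this is exactly the step that fails for rings such as $\bbZ$ at $n = 2$, where $SR_2$ does not hold. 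The main obstacle is therefore the promotion $SR_2 \Rightarrow SR_n$ itself, since the matrix manipulations, though fiddly, are routine once the stable range shortening is available.
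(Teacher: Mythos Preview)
Your argument is correct. The paper's own proof is a two-line citation: it invokes the result $\GL_n(R)=\GL_1(R)E_n(R)$ from Hahn--O'Meara (valid under $SR_2$), then notes that for $g\in\SL_n(R)$ the decomposition $g=ue$ forces $u=1$ by taking determinants. Your proof instead unpacks this black box into an explicit inductive reduction by elementary row and column operations, which is essentially the classical argument underlying the cited theorem. The promotion $SR_2\Rightarrow SR_m$ that you flag as the main obstacle is in fact already recorded in the paper immediately after the definition of the stable range conditions, so no separate lemma is needed. The trade-off is transparency versus brevity: your version is self-contained and makes visible exactly where the stable range hypothesis enters (the shortening of the unimodular last column), while the paper's version is terse and defers the matrix manipulations to the literature.
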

	\begin{proof}
		By Theorem 4.2.5 in \cite{HahnO'Meara}, $\GL_n(R)=\GL_1(R)E_n(R)$. If $g\in \SL_n(R)$, use this to write $g = ue$, where $u\in\GL_1(R)\cong R^\times$ and $e\in E_n(R)$. Taking determinants, we see that $u=1$.
	\end{proof}
	
	\begin{thm}\label{E2}
		Let $K$ be a number field, $S$ be a set of places containing all infinite places, and $\cO_S$ be be the associated Dedekind domain of arithmetic type.
		\begin{itemize}
			\item If $\cO_S$ is Euclidean, then $\SL_n(\cO_S)$ is generated by elementary matrices.
			\item $($Bass-Milnor-Serre \cite{BassMilnorSerre}$)$ If $n\geq 3$, then $\SL_n(\cO_S)$ is generated by elementary matrices.
			\item $($Vaserstein \cite{Vaserstein}$)$ If $\abs{S}\geq 2$, then $\SL_2(\cO_S)$ is generated by elementary matrices.
			\item $($Cohn \cite{Cohn}, Nica \cite{Nica}\footnote{See Sheydvasser \cite{Sheydvasser} for a corrigendum.}$)$ If $K$ is imaginary quadratic, then $\SL_2(\cO_K)$ is generated by elementary matrices if and only if $\cO_K$ is Euclidean. In the case where this does not hold, $E_2(\cO_K)$ is not normal in $SL_2(\cO_K)$, and has infinite index.
		\end{itemize}
	\end{thm}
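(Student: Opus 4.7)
The four bullets are independent classical results by different authors, so my plan is to sketch an approach to each in turn.

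For the first bullet, I would induct on $n$ via the Euclidean algorithm: the first column of any $g\in \SL_n(\cO_S)$ is unimodular, so repeated subtraction of a minimal-norm row from the others clears it to $(u,0,\dots,0)^T$ with $u$ a unit; a further elementary step normalizes $u$ to $1$, a symmetric column cleanup clears the first row, and the residual lies in $\SL_{n-1}(\cO_S)$. For the second bullet, $\cO_S$ has Krull dimension $1$ and so satisfies $SR_3$ by Bass's cancellation theorem (quoted in the excerpt). For $n\geq 3$, $SR_3$ is precisely what is needed to replace the bottom row of $g\in \SL_n(\cO_S)$ by $(0,\dots,0,1)$ via elementary operations; then column operations above this $1$ reduce $g$ to a block form $\left(\begin{smallmatrix} g' & 0 \\ 0 & 1 \end{smallmatrix}\right)$, and induction brings everything down to $\SL_3(\cO_S)$. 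The genuine content of Bass--Milnor--Serre at this base case is the computation $SK_1(\cO_S)=0$, carried out via the theory of Mennicke symbols together with the congruence subgroup property.

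For the third bullet, $SR_3$ is insufficient when $n=2$, and the additional arithmetic input when $|S|\geq 2$ is Dirichlet's unit theorem, which guarantees that $\cO_S^\times$ has positive rank. Vaserstein's argument combines strong approximation with the abundance of units to show that, for any $g=\left(\begin{smallmatrix} a & b \\ c & d\end{smallmatrix}\right)\in \SL_2(\cO_S)$, there is some $t\in \cO_S$ making $c-td$ a unit of $\cO_S$; after one elementary row operation the bottom-left entry is a unit, and the reduction then proceeds as in the Euclidean case.

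The fourth bullet is where I expect the main obstacle. When $\cO_K$ is imaginary quadratic, $\cO_K^\times$ is finite, so the Vaserstein strategy collapses completely. Following Cohn, I would introduce a normal form --- a Gauss/continued-fraction reduction using the pseudo-Euclidean function on $\cO_K$ --- for words in elementary matrices, and exhibit an explicit element of $\SL_2(\cO_K)$ that provably violates the form. For Nica's strengthening to non-normality and infinite index, the plan is to exploit the action of $\SL_2(\cO_K)$ on hyperbolic $3$-space as a Bianchi group, and to detect conjugates of elementary matrices whose fixed points in $\mathbb{H}^3$ lie outside the $E_2(\cO_K)$-orbit, producing infinitely many distinct cosets. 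The deep point, and the overall main obstacle, is that the negative results rely on arithmetic and hyperbolic geometry specific to Bianchi groups, whereas the uniform K-theoretic package (stable range plus Mennicke symbols) that drives bullets 1--3 is essentially powerless in this setting.
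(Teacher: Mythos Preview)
The paper does not prove this theorem: it is stated purely as a survey of classical results, each bullet carrying its own citation (Bass--Milnor--Serre, Vaserstein, Cohn, Nica), and is used as a black box thereafter. So there is no argument in the paper to compare your sketches against; your proposal already goes well beyond what the paper attempts.

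That said, your sketch for the third bullet contains a genuine error. You assert that for $g = \left(\begin{smallmatrix} a & b \\ c & d \end{smallmatrix}\right) \in \SL_2(\cO_S)$ one can always find $t \in \cO_S$ with $c - td$ a unit. Since $(c,d)$ ranges over all unimodular pairs, this is exactly the stable range condition $SR_2$, which the paper itself notes fails for number rings, and it continues to fail for $\cO_S$ with $|S| \geq 2$. Concretely, in $\bbZ[\tfrac{1}{2}]$ (so $K=\bbQ$, $S=\{\infty,2\}$) take the unimodular pair $(3,31)$: the powers of $2$ modulo $31$ are $\{1,2,4,8,16\}$, so $\pm 2^k$ hits only the ten residues $\{1,2,4,8,15,16,23,27,29,30\}$ mod $31$, none of which is $3$; hence $3 - 31t$ is never a unit of $\bbZ[\tfrac{1}{2}]$. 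Vaserstein's actual argument does exploit the positive rank of $\cO_S^\times$, but not to manufacture a unit entry in a single step; it uses the density of units at the places in $S$ to run an iterative reduction that mimics a Euclidean algorithm without one being literally present. Your sketches for the other bullets are broadly reasonable, modulo a minor imprecision in the second: the $SR_3$ row reduction carries $\SL_n$ down to $\SL_2$, not $\SL_3$, and it is the stability isomorphism $\SL_n/E_n \cong SK_1(\cO_S)$ for $n \geq 3$, combined with Bass--Milnor--Serre's vanishing $SK_1(\cO_S)=0$, that closes the argument.
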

	
	We now consider relations for $E_n(R)$. Some relations are purely structural, and are recorded in the Steinberg group:
	
	\begin{defn}
		Let $n\geq 3$. We define the \emph{Steinberg group} $S_n(R)$ to be the group generated by formal symbols $x_{ij}(\lambda)$, where $1\leq i,j\leq n$, $i\ne j$, $\lambda\in R$, subject to the following relations:
		\begin{itemize}
			\item $x_{ij}(\lambda)x_{ij}(\mu) = x_{ij}(\lambda+\mu)$
			\item $[x_{ij}(\lambda),x_{k\ell}(\mu)] = \begin{cases}
				1 & j\ne k\text{ and } i\ne \ell, \\
				x_{i\ell}(\lambda\mu) & j= k\text{ and } i\ne \ell, \\
				x_{kj}(-\mu\lambda) & j\ne k\text{ and } i= \ell.
			\end{cases}$
		\end{itemize}
	\end{defn}
	
	The elementary matrices satisfy the defining relations of the Steinberg group, so there is a surjective map $S_n(R)\to E_n(R)$ sending $x_{ij}(\lambda)$ to $e_{ij}(\lambda)$.
	
	\begin{defn}
		The kernel of the map $S_n(R)\to E_n(R)$ is written $K_2(n,R)$.
	\end{defn}
	This is a subtle algebraic invariant of $R$. Fortunately for us, it stabilizes.	
	There is a natural inclusion $E_n(R)\hookrightarrow E_{n+1}(R)$, sending a matrix $A$ to $\begin{bmatrix} A & 0 \\ 0 & 1 \end{bmatrix}$. Similarly, we have a map $S_n(R)\hookrightarrow S_{n+1}(R)$ sending $x_{ij}(\lambda)$ to $x_{ij}(\lambda)$. Using these inclusions, we produce a natural map $K_2(n,R)\to K_2(n+1,R)$.
	
	\begin{thm}[Bass, Vaserstein, Dennis, Suslin, Tulinbayev, van der Kallen \cite{SurjStab}]\label{surjStab}
		Let $R$ satisfy $SR_k$. Then for $n\geq k$, the natural maps $$K_2(n,R)\to K_2(n+1,R)$$ and $$K_2(n,R)\to K_2(R)$$ are surjective, and when $n\geq k+1$ they are isomorphisms. 
	\end{thm}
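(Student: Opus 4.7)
The plan is to follow the classical strategy of Vaserstein, Suslin, and van der Kallen: work directly inside the Steinberg group $S_{n+1}(R)$ and use $SR_k$ to produce a Bruhat-style normal form whose obstruction to lying in $S_n(R)$ can be cleared.

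The first ingredient is a normal form on $S_{n+1}(R)$. Using the commutator relations in the definition of the Steinberg group, I would show that every $g \in S_{n+1}(R)$ can be written as $g = u \cdot h \cdot v$, where $u$ is a product of ``last-column'' generators $x_{i,n+1}(\lambda_i)$ with $i \leq n$, $v$ is a product of ``last-row'' generators $x_{n+1,j}(\mu_j)$ with $j \leq n$, and $h$ lies in the image of $S_n(R) \to S_{n+1}(R)$. The commutator identities ensure that moving any $x_{i,n+1}$ or $x_{n+1,j}$ past an element of $S_n(R)$ only introduces more factors of the same shape, so the normal form exists (though it is certainly not unique). This is a presentation-level analog of the block decomposition of $E_{n+1}(R)$.

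For surjectivity of $K_2(n,R)\to K_2(n+1,R)$ when $n\geq k$: take $g \in K_2(n+1, R)$ and write $g = uhv$ as above. Since $g$ maps to $\id$ in $E_{n+1}(R)$, analyzing the last row and last column of the identity $\tilde u \tilde h \tilde v = \id$ in $\GL_{n+1}(R)$ forces strong constraints on the entries of $\tilde u$ and $\tilde v$. Now invoke $SR_k$: the resulting unimodular rows of length $n$ can be cleared by elementary operations lifted from $E_n(R)$ to $S_n(R)$, which absorbs $u$ and $v$ into $h$ and exhibits $g$ as the image of an element of $K_2(n,R)$. Surjectivity onto $K_2(R) = \varinjlim_m K_2(m,R)$ then follows by iterating the stability and passing to the colimit.

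For injectivity when $n \geq k+1$, the argument is more delicate. One needs to show that any relation in $S_{n+1}(R)$ equating two lifts of a fixed element of $K_2(n,R)$ is already a consequence of the defining Steinberg relations in $S_n(R)$. The main obstacle is bookkeeping: the reduction must simultaneously preserve the image in $E_n(R)$ and keep the intermediate words inside $S_n(R)$, and it is precisely here that one needs $SR_{k+1}$ rather than merely $SR_k$, in order to normalize an auxiliary unimodular vector whose effective length has grown by one in the process. Rather than redo the commutator combinatorics, I would quote the normal-form and cancellation lemmas of \cite{SurjStab} and assemble them to conclude; the conceptual strategy is clear, but the genuine difficulty lies in the technical commutator identities that the cited references establish.
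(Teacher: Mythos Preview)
The paper does not prove this theorem at all; it is quoted from the literature with the citation \cite{SurjStab} and then used as a black box. There is therefore no in-paper argument to compare your proposal against.

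On its own merits, your sketch has a genuine gap at the very first step. The normal form $g = u\,h\,v$, with $u$ in the subgroup $U$ generated by the $x_{i,n+1}(\lambda)$, $v$ in the subgroup $V$ generated by the $x_{n+1,j}(\mu)$, and $h$ in the image of $S_n(R)$, does not hold for arbitrary $g\in S_{n+1}(R)$. Already at the level of $E_{n+1}(R)$, every matrix in $U\cdot E_n(R)\cdot V$ has $(n{+}1,n{+}1)$-entry equal to $1$, whereas for instance $e_{n+1,1}(1)\,e_{1,n+1}(1)$ has $(n{+}1,n{+}1)$-entry $2$. Your justification---that moving an $x_{i,n+1}$ or $x_{n+1,j}$ past an element of $S_n(R)$ only produces further factors of the same shape---is correct as stated, but it says nothing about commuting an $x_{n+1,j}$ past an $x_{j,n+1}$; there is no Steinberg commutator relation for that pair, and such products genuinely leave the set $U\cdot S_n(R)\cdot V$.

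There is a second symptom that the normal form is too strong: if it held, your own analysis of the last row and column of $\tilde u\,\tilde h\,\tilde v=\id$ would force all $\lambda_i$ and $\mu_j$ to vanish (since $U$ and $V$ are abelian and inject into $E_{n+1}(R)$), giving $g=h$ in the image of $S_n(R)$ and hence surjectivity \emph{without ever invoking $SR_k$}. The actual proofs in the cited literature do not proceed via such a decomposition; they use the stable range condition to rewrite a given Steinberg word so as to eliminate the index $n{+}1$ one generator at a time, and the injectivity statement requires a separate and substantially more delicate analysis of relations. Your final paragraph effectively concedes this by deferring the real work back to \cite{SurjStab}, so the proposal does not stand independently.
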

	
	If $R$ satisfies $SR_3$, it follows that $K_2(3,R)$ surjects onto $K_2(n,R)$ for $n\geq 3$. The output of this theorem is the following fact, which we need in the sequel:
	
	\begin{thm}\label{thm:SLnPres}
		If $n\geq 3$ and $R$ is such that the natural map $K_2(3,R)\to K_2(n,R)$ is surjective, then $E_n(R)$ is isomorphic to the group generated by symbols $x_{ij}(\lambda)$, where $1\leq i,j\leq n$, $i\ne j$, $\lambda\in R$, subject to the relations:
		\begin{itemize}
			\item $x_{ij}(\lambda)x_{ij}(\mu) = x_{ij}(\lambda+\mu)$
			\item $[x_{ij}(\lambda),x_{k\ell}(\mu)] = \begin{cases}
				1 & j\ne k\text{ and } i\ne \ell, \\
				x_{i\ell}(\lambda\mu) & j= k\text{ and } i\ne \ell, \\
				x_{kj}(-\mu\lambda) & j\ne k\text{ and } i= \ell.
			\end{cases}$
			\item Any additional relations present in $E_3(R)$.
		\end{itemize}
	\end{thm}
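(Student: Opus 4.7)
The plan is to unpack the phrase ``any additional relations present in $E_3(R)$'' in terms of the Steinberg group, and then to do a short diagram chase. Let me write $G_n$ for the group defined by the given presentation. First observe that there is a natural surjection $G_n \twoheadrightarrow E_n(R)$ sending $x_{ij}(\lambda) \mapsto e_{ij}(\lambda)$: this is well-defined because elementary matrices satisfy the Steinberg relations, and they also satisfy whatever additional relations hold in $E_3(R)$ (since $E_3(R) \hookrightarrow E_n(R)$ via the standard inclusion); it is surjective because $E_n(R)$ is by definition generated by elementary matrices. So everything comes down to showing this map is injective.

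To do this, I would interpret ``additional relations in $E_3(R)$'' as a choice of normal generating set $\mathcal{R} \subseteq S_3(R)$ for the kernel $K_2(3,R)$ of $S_3(R) \twoheadrightarrow E_3(R)$. Under the natural inclusion $S_3(R) \hookrightarrow S_n(R)$ induced by $x_{ij}(\lambda)\mapsto x_{ij}(\lambda)$, I let $N \trianglelefteq S_n(R)$ be the normal closure of the image of $\mathcal{R}$. Then by definition, $G_n = S_n(R)/N$, since the presentation of $G_n$ is exactly the presentation of $S_n(R)$ with the additional relations $\mathcal{R}$ imposed.

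Next I would show $N = K_2(n,R)$, from which the theorem follows immediately since $E_n(R) = S_n(R)/K_2(n,R)$. The inclusion $N \subseteq K_2(n,R)$ is immediate: each relation in $\mathcal{R}$ lies in $K_2(3,R)$, which maps into $K_2(n,R)$ under the inclusion $S_3(R) \hookrightarrow S_n(R)$, so $N$ (as the normal closure of these) sits inside the normal subgroup $K_2(n,R)$. For the reverse inclusion, I would invoke the hypothesis: the map $K_2(3,R) \to K_2(n,R)$ is surjective, so any $\alpha \in K_2(n,R)$ can be written as the image of some $\beta \in K_2(3,R)$. Since $\mathcal{R}$ normally generates $K_2(3,R)$ inside $S_3(R)$, the element $\beta$ is a product of conjugates of elements of $\mathcal{R}^{\pm 1}$ in $S_3(R)$; pushing this word forward along $S_3(R) \hookrightarrow S_n(R)$ exhibits $\alpha$ as a product of conjugates of elements of $\mathcal{R}^{\pm 1}$ in $S_n(R)$, i.e., as an element of $N$.

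The only subtlety is the initial interpretation of ``additional relations present in $E_3(R)$'' as a normal generating set for $K_2(3,R)$; once that is set up, the proof is essentially a one-line diagram chase and the hypothesis $K_2(3,R) \twoheadrightarrow K_2(n,R)$ does all the real work. I do not expect any genuine obstacle here; the content of the theorem is really just the combination of (i) the universal definition of the Steinberg group and (ii) the surjectivity on $K_2$ that is assumed.
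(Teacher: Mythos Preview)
Your proof is correct, and in fact the paper does not provide an explicit proof of this theorem at all: it is stated as ``the output'' of the preceding discussion (the definitions of $S_n(R)$ and $K_2(n,R)$, together with the surjectivity hypothesis), and left to the reader. Your argument is exactly the diagram chase that is implicit there, so there is nothing to compare.

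One very minor remark: you write $S_3(R)\hookrightarrow S_n(R)$, suggesting injectivity, but nowhere in your argument do you use that this map is injective---only that it is a group homomorphism fitting into the commutative square
\[
\xymatrix{S_3(R) \ar[r] \ar[d] & S_n(R) \ar[d] \\ E_3(R) \ar[r] & E_n(R).}
\]
Injectivity of the stabilization maps on Steinberg groups is true but is itself a nontrivial theorem, so it is cleaner not to invoke it. Your argument goes through verbatim with the arrow $S_3(R)\to S_n(R)$.
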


	\section{Simplicial Complexes}
	
	In this section, we present tools to analyze the topology of simplicial complexes, and then apply these tools to analyze the Tits complex.
	
	\subsection{Topology of Posets}\label{simptools}\;
	
	We discuss a relationship between posets and simplicial complexes. We have the following definition:
	\begin{defn}\label{defn:nerve}
		Let $P$ be a poset. The \emph{nerve} of $P$ is the simplicial complex $N(P)$ whose vertices are elements of $P$, and with a $k$-simplex $\sigma$ for every set of vertices $\{p_0,\dots, p_k\}$ such that $$p_0 < p_1 <\dots< p_{k}.$$ The nerve is a functor from the category of posets to the category of spaces.
	\end{defn}
	\begin{exmp}
		Consider the poset $\{a_1,a_2,b_1,b_2\}$, ordered such that $a_i \leq b_j$ for all $i,j$, and no other comparisons hold. The nerve has four vertices, and four edges: $\{a_1,b_1\}, \{a_1,b_2\}, \{a_2,b_1\},\{a_2,b_2\}$. The resulting space is homeomorphic to $S^1$:
		$$\xymatrix{ [a_1] \ar@{-}[d] \ar@{-}[r] & [b_1] \ar@{-}[d] \\ [b_2] \ar@{-}[r] & [a_2]}$$
	\end{exmp}
	Every simplicial complex is homeomorphic to the nerve of a poset:
	\begin{exmp}\label{subdiv}
		Let $X$ be a simplicial complex. Let $P_X$ be the poset consisting of all simplices of $X$, ordered by inclusion. The space $N(P_X)$ is isomorphic to the barycentric subdivision of $X$, and as such is homeomorphic to $X$.
	\end{exmp}
	\begin{defn}
		Let $X$ be a poset, and let $x\in X$. The \emph{height of $x$}, written $h(x)$, is the maximum $h$ such that there is a chain in $X$ ending at $x$:
		$$p_0<p_1<\dots<p_{h} = x$$
		If there is such a chain for all $h$, then we write $h(x)=\infty$.
	\end{defn}
	The \emph{dimension} of a poset $X$ is $\displaystyle \dim X = \max_{x\in X} h(x)$; this agrees with the dimension of the nerve. 
	If $P$ is a poset, we say that it is $n$-spherical (resp. Cohen-Macaulay of dimension $n$) if $N(P)$ is. We have the following characterizations of the star and link. Let $\sigma = \{p_0<p_1<\dots < p_{k}\}$. Then, 
	\begin{align*}
		\str_P \sigma &= N\left(\{x \in P \mid x\text{ is comparable to }p_i\ \forall i\}\right)\\
	\lk_P \sigma &= N\left(\{x\in P\mid x \text{ extends the chain } p_0<p_1<\dots < p_{k}\}\right)
	\end{align*}
	Additionally, we define the following subposets. If $x\in X$, then:
	\begin{itemize}
		\item $X_{<x} := \{x'\in X \mid x' < x \}$
		\item $X_{>x} := \{x'\in X \mid x' > x \}$
	\end{itemize}
	
	We have the following poset-level condition for a complex to be Cohen-Macaulay, due to Quillen:
	\begin{prop}[Quillen, {\cite[Proposition 8.6]{Quillen}}]\label{prop:QuillenCM}
		A poset $X$ is Cohen-Macaulay of dimension $n$ if and only if the following all hold:
		\begin{itemize}
			\item $X$ is $n$-spherical.
			\item For all $x\in X$, the poset $X_{<x}$ is $(h(x)-1)$-spherical.
			\item For all $x\in X$, the poset $X_{>x}$ is $(n-h(x)-1)$-spherical.
			\item For all $x,x'\in X$ with $x<x'$, the poset $(X_{>x}\cap X_{<x'})$ is $(h(x')-h(x)-2)$-spherical.
		\end{itemize}
	\end{prop}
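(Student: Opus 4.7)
The plan is to exploit a join decomposition of the link of a simplex in $N(X)$. Concretely, if $\sigma = \{p_0 < p_1 < \cdots < p_k\}$ is a simplex of $N(X)$, then any element of $X$ comparable to each $p_i$ lies in one of the ``gaps'' $X_{<p_0}$, $X_{>p_i} \cap X_{<p_{i+1}}$, or $X_{>p_k}$, and elements in distinct gaps are automatically comparable with the expected ordering. The nerve of such an ordinal sum of subposets is the join of the nerves, yielding
\begin{equation*}
\lk_{N(X)}(\sigma) \;\cong\; N(X_{<p_0}) * N(X_{>p_0}\cap X_{<p_1}) * \cdots * N(X_{>p_{k-1}}\cap X_{<p_k}) * N(X_{>p_k}).
\end{equation*}
Combined with the standard fact that the join of an $a$-spherical complex with a $b$-spherical complex is $(a+b+1)$-spherical, this reduces the Cohen--Macaulay condition to sphericity of exactly the subposets appearing in the statement. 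Throughout, I take the convention that the empty complex $N(\emptyset)$ is $(-1)$-spherical and acts as the identity for the join, which is essential for the collapsing arguments below.

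For the ``if'' direction, given a $k$-simplex $\sigma = \{p_0 < \cdots < p_k\}$, the four hypotheses supply that the $k+2$ join factors above are spherical of dimensions $h(p_0)-1$, then $h(p_{i+1})-h(p_i)-2$ for $i = 0, \ldots, k-1$, and finally $n-h(p_k)-1$. A telescoping computation shows these sphericities, together with the $+1$ contributed by each of the $k+1$ join operations, add up to $n-k-1$, matching Definition \ref{def:CM}. Combined with the hypothesis that $X$ itself is $n$-spherical, this gives that $X$ is Cohen--Macaulay of dimension $n$.

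For the ``only if'' direction, the idea is to \emph{isolate} each relevant subposet as a link by choosing a simplex for which every other join factor collapses. Cohen--Macaulay-ness implies that $X$ is pure, and a pigeonhole argument on heights shows that every maximal chain in $X$ must be saturated of the form $y_0 < y_1 < \cdots < y_n$ with $h(y_i) = i$. Given $x \in X$ with $h(x) = h$, extend $\{x\}$ to such a maximal chain and consider $\sigma = \{y_h = x,\; y_{h+1}, \ldots, y_n\}$: saturation forces every ``upper'' gap subposet to be empty, and $X_{>y_n}$ is empty since $y_n$ has maximum height, so the join collapses to $\lk(\sigma) \cong N(X_{<x})$, which is therefore $(h-1)$-spherical by Cohen--Macaulay-ness. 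The case of $X_{>x}$ is symmetric (extend $\{x\}$ downward saturatedly), and $X_{>x} \cap X_{<x'}$ is handled by extending saturatedly on both outer ends of $x < x'$ while leaving only the middle gap open.

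The main obstacle is really setting up the isolation machinery cleanly: one needs the purity of $X$, the saturation of maximal chains, and the proper treatment of empty factors under the join before the dimension arithmetic can be trusted in the ``only if'' direction. Once these structural facts are recorded, both implications reduce to routine accounting against the join-sphericity formula.
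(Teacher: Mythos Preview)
The paper does not give its own proof of this proposition; it simply cites Quillen \cite[Proposition 8.6]{Quillen}. Your argument is correct and is essentially the standard one: the join decomposition of the link, the additivity of sphericity under join, the telescoping dimension count for the ``if'' direction, and the isolation of each subposet via saturated chains for the ``only if'' direction are all sound. In particular, your derivation of purity from the Cohen--Macaulay hypothesis (a maximal $k$-simplex with $k<n$ would have empty link, which is $(-1)$-spherical rather than $(n-k-1)$-spherical) and the height-pigeonhole argument forcing $h(y_i)=i$ along any maximal chain are the right ingredients, and the empty-join convention is handled correctly. There is nothing to correct; you have supplied the details the paper elects to outsource to Quillen.
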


	Let $f:X\to Y$ be a map of posets. Then, for any $y\in Y$, we define:
	$$f/y := \{x\in X\mid f(x)\leq y\}.$$
	
	When we speak of the homology of a poset, we refer to the homology of its nerve. The following theorem of Quillen helps us compute this homology.
	
	\begin{thm}[Quillen, {\cite[Theorem 9.1]{Quillen}}]\label{Quillen}
		Let $f\colon X\to Y$ be a map of posets. Assume that $Y$ is $n$-spherical, and that for all $y\in Y,$ the poset $Y_{>y}$ is $(n-h(y)-1)$-spherical and the poset $f/y$ is $h(y)$-spherical. Then $X$ is $n$-spherical, and there is a descending filtration $F_\bullet$ of $\tilde{\HH}_n(X)$ such that $f_*$ factors through an isomorphism
		$$\tilde{\HH}_n(X)/F_0 \cong \tilde{\HH}_n(Y),$$ and where
		$$F_q/F_{q+1} \cong \bigoplus_{h(y)=q} \tilde{\HH}_{n-q-1}(Y_{>y})\tensor \tilde{\HH}_q(f/y).$$
	\end{thm}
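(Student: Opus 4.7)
The plan is to follow Quillen's original strategy from \cite{Quillen}, using an auxiliary bisimplicial complex together with a filtration spectral sequence.

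First I would construct the bisimplicial set $B_{\bullet,\bullet}$ whose $(p,q)$-bisimplices are pairs consisting of a chain $y_0 < \cdots < y_p$ in $Y$ together with a chain $x_0 < \cdots < x_q$ in $X$ satisfying $f(x_q) \leq y_0$. Its geometric realization $B$ admits projections to both $N(X)$ and $N(Y)$. The projection $B \to N(X)$ has fiber over a chain $(x_0 < \cdots < x_q)$ equal to $N(Y_{\geq f(x_q)})$, which is contractible because the poset $Y_{\geq f(x_q)}$ has a minimum element. A standard bisimplicial argument (or Quillen's Theorem A applied to the poset of bisimplices) then gives $B \simeq N(X)$, so $\tilde{\HH}_*(B) \cong \tilde{\HH}_*(X)$.

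Next I would filter $B$ by the invariant $h(y_0)$: let $F_q B$ be the subcomplex spanned by bisimplices with $h(y_0) \geq q$. A short check using monotonicity of $f$ and of $h$ confirms this really is a subcomplex. The successive quotient $F_q B / F_{q+1} B$ decomposes as a wedge, indexed by $y \in Y$ with $h(y) = q$, of pieces identifiable with the based join $N(f/y) \ast N(Y_{>y})$, where $y$ itself plays the role of the join point between the two factors. Applying the join formula for reduced homology, together with the hypothesized sphericity of $Y_{>y}$ in degree $n-q-1$ and of $f/y$ in degree $q$, shows that the $E^1$-page of the resulting spectral sequence is concentrated on the single antidiagonal $p+q=n$, with
$$E^1_{q,\, n-q} \;\cong\; \bigoplus_{h(y)=q} \tilde{\HH}_{n-q-1}(Y_{>y}) \otimes \tilde{\HH}_q(f/y).$$
For degree reasons the spectral sequence collapses at $E^1$, forcing $\tilde{\HH}_i(X) = 0$ for $i \neq n$ (so $X$ is $n$-spherical) and producing the claimed filtration on $\tilde{\HH}_n(X)$.

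Finally, I would identify the bottom quotient $\tilde{\HH}_n(X)/F_0$ with $\tilde{\HH}_n(Y)$ by exploiting the other projection $B \to N(Y)$: the composite $N(X) \simeq B \to N(Y)$ recovers $f_*$, and the lowest-height stratum of the filtration is exactly the homology pulled back from $N(Y)$ via the sphericity of $Y$ itself. The main obstacle will be the combinatorial verification that $F_q B / F_{q+1} B$ really decomposes as the stated wedge of joins: correctly separating bisimplices with $y_0$ of height exactly $q$ from those with $h(y_0) > q$, keeping track of the join point $y_0 = y$ inside $N(f/y) \ast N(Y_{>y})$, and ensuring that no spurious degeneracies or face identifications intrude, is where the bookkeeping is genuinely delicate.
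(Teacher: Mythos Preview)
The paper does not give its own proof of this statement: it is quoted as Quillen's Theorem 9.1 and used as a black box. Your proposal is a faithful sketch of Quillen's original bisimplicial/spectral-sequence argument, and the outline is correct; since there is no in-paper proof to compare against, there is nothing further to say beyond noting that your identification of $F_qB/F_{q+1}B$ with a wedge of joins is indeed the delicate step, and your flagging of it is appropriate.
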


	\subsection{The Partial Basis Complex}\;
	
	In this section, we discuss a complex which is closely related to the Tits complex.
	
	\begin{defn}
		The \emph{partial basis complex} $\cB_n(R)$ of a ring $R$ is the simplicial complex whose vertices are those vectors in $R^n$ which are contained in a basis of $R^n$, and where $\{v_1,\dots v_{k+1}\}$ are the vertices of a $k$-simplex if this set can be completed to a basis of $R^n$. 
	\end{defn} 
	The partial basis complex is not the nerve of a poset, but its barycentric subdivision is the nerve of the poset of partial bases of $R^n$, ordered by inclusion (see Example \ref{subdiv}). We call this poset $\bbB_n(R)$. 
	
	The following result may be deduced from the work of van der Kallen \cite{vdK}.
	\begin{thm}\label{vdK}
		Let $R$ be a ring satisfying $SR_k$ for $k\geq 2$. Then, the partial basis complex $\cB_n(R)$ is $(n-k)$-connected. It is Cohen-Macaulay if $k=2$.
	\end{thm}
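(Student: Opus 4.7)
The plan is to deduce both claims from van der Kallen's connectivity results in \cite{vdK}. I would work with the poset $\bbB_n(R)$ of partial bases of $R^n$ ordered by inclusion: by Example \ref{subdiv}, the nerve of $\bbB_n(R)$ is the barycentric subdivision of $\cB_n(R)$, so both complexes have the same homotopy type and agree on any Cohen-Macaulay analysis.

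For the $(n-k)$-connectivity claim, I would cite van der Kallen's main theorem in \cite{vdK}: under $SR_k$, the nerve of $\bbB_n(R)$ is $(n-k)$-connected. This is proved by induction on $n$, where the stable range condition is used to modify vectors in a given partial basis so as to obtain the transversality needed to extend a map $S^{n-k} \to \cB_n(R)$ across a disk.

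For the Cohen-Macaulay claim when $k=2$, I would proceed by strong induction on $n$ and apply Proposition \ref{prop:QuillenCM} to $\bbB_n(R)$. The conditions on $\bbB_n(R)_{<x}$ and on the intervals $\bbB_n(R)_{>x}\cap \bbB_n(R)_{<x'}$ are automatic: each such poset is isomorphic to the poset of nonempty proper subsets of a finite set, whose nerve is the barycentric subdivision of the boundary of a simplex, hence a sphere of the required dimension. The essential step is verifying that $\bbB_n(R)_{>x}$ is $(n-|x|-1)$-spherical. For a partial basis $x = \{v_0,\ldots,v_{m-1}\}$ of size $m$, reduction modulo $\langle v_0,\ldots,v_{m-1}\rangle$ yields a poset map $\pi\colon \bbB_n(R)_{>x} \to \bbB_{n-m}(R)$. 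I would apply Theorem \ref{Quillen} to $\pi$: the target $\bbB_{n-m}(R)$ is Cohen-Macaulay of dimension $n-m-1$ by the inductive hypothesis, which supplies both the sphericity of $\bbB_{n-m}(R)$ and of all of its upper intervals $\bbB_{n-m}(R)_{>y}$ required by Quillen's hypotheses.

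The main obstacle is the fiber analysis. For $y = \{\bar w_1,\ldots,\bar w_\ell\} \in \bbB_{n-m}(R)$, an element of $\pi/y$ is a partial basis of $R^n$ of the form $\{v_0,\ldots,v_{m-1}\} \cup S$ where each element of $S$ projects into $y$ and the elements of $S$ have pairwise distinct projections. Since any two lifts of $\bar w_i$ differ by a vector in $\langle x \rangle$, the set of lifts of each $\bar w_i$ is an affine copy of $R^m$. This identifies $\pi/y$ with the face poset of the join of these $\ell$ discrete sets of lifts, which is an $(\ell-1)$-spherical complex, exactly the $h(y)$-sphericity required by Theorem \ref{Quillen}. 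The resulting filtration then places the reduced homology of $\bbB_n(R)_{>x}$ in the top degree $n-m-1$, yielding the required sphericity and completing the induction.
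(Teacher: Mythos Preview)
Your connectivity step has a real gap. Van der Kallen's results in \cite{vdK} are stated for the complex $\cU_n(R)$ of unimodular sequences---sets $\{v_1,\dots,v_{k+1}\}$ spanning a free direct summand of $R^n$---not for $\cB_n(R)$. A simplex of $\cU_n(R)$ lies in $\cB_n(R)$ only when that summand is also \emph{cofree}, and this can fail (see Example~\ref{exmp:nongood}). The paper's proof therefore inserts an essential step you omit: under $SR_k$, any stably free module of rank $\ge k-1$ is free (Theorem~\ref{BassCancel}), which forces $\cB_n(R)^{[n-k]}=\cU_n(R)^{[n-k]}$. Only after this identification does van der Kallen's $(n-k)$-connectivity of $\cU_n(R)$ transfer to $\cB_n(R)$. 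Simply citing \cite{vdK} for $\bbB_n(R)$ skips this, and your one-sentence sketch of ``transversality via stable range'' does not address the cofreeness issue at all.

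For the Cohen--Macaulay claim your route diverges from the paper's. The paper observes that when $k=2$ the skeleton comparison above extends to the top: an $(n-1)$-simplex of $\cU_n(R)$ is automatically a full basis, so $\cB_n(R)=\cU_n(R)$ outright, and the Cohen--Macaulay property is then read off directly from van der Kallen's link estimates for $\cU_n(R)$. Your approach instead runs Proposition~\ref{prop:QuillenCM} by hand, analyzing $\bbB_n(R)_{>x}$ via the reduction map to $\bbB_{n-m}(R)$ and Theorem~\ref{Quillen}. That argument is sound---the fibers $\pi/y$ really are face posets of joins of discrete sets, and the inductive hypothesis supplies sphericity of the target and its upper intervals---but it is considerably more work than the paper's one-line identification, and it still rests on the connectivity statement whose proof you have not completed.
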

	We give a precise exposition of this deduction. We need an auxiliary definition:
	\begin{defn}
		Let $\cU_n(R)$ be the simplicial complex where a set $\{v_1,\dots v_{k+1}\}$ of vectors in $R^n$ are the vertices of a $k$-simplex if this set spans a $(k+1)$-dimensional free direct summand of $R^n$.
	\end{defn}
	We have $\cB_n(R)\subseteq \cU_n(R)$. These complexes do not necessarily agree. A simplex $\sigma = \{v_1,\dots v_{k+1} \}$ of $\cU_n(R)$ lies in $\cB_n(R)$ exactly when the $\Span \{v_1,\dots v_{k+1} \}$ admits a free complement. For an example where this fails, see Example \ref{exmp:nongood}.

	\begin{thm}[van der Kallen, \cite{vdK}]\label{realvdK}
		Let $R$ be a ring satisfying $SR_k$ for $k\geq 2$. Then, the complex $\cU_n(R)$ is $(n-k)$-connected. It is Cohen-Macaulay if $k=2$.
	\end{thm}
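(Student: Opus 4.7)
The plan is to proceed by induction on $n$, with the base cases $n < k$ being vacuous (as $\cU_n(R)$ is at least nonempty) and the inductive step splitting into a link computation and a covering argument that exploits $SR_k$.

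For the link computation, consider a $j$-simplex $\sigma = \{v_0,\dots,v_j\}$ of $\cU_n(R)$. By hypothesis, $V = \Span(\sigma)$ is a free summand of rank $j+1$; choose a free complement $W \cong R^{n-j-1}$. The projection $R^n \twoheadrightarrow W$ induces a simplicial map $\lk(\sigma) \to \cU_{n-j-1}(W)$, and a vector $w$ lies in $\lk(\sigma)$ precisely when its image in $W$ is unimodular and the resulting extended set spans a free summand there. The fibers of this map over each simplex $\tau$ are torsors for $V^{|\tau|}$, hence contractible, so $\lk(\sigma) \simeq \cU_{n-j-1}(R)$. By the inductive hypothesis, $\cU_{n-j-1}(R)$ is $(n-j-1-k)$-connected, and Cohen--Macaulay of that dimension when $k = 2$.

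To promote link connectivity to connectivity of the full complex, I would use a covering-by-stars argument. Given a simplicial map $f: S^m \to \cU_n(R)$ with $m \leq n-k$, the goal is to produce a single vertex $v \in \cU_n(R)$ such that $f$ factors (after subdivision and homotopy) through $\str(v)$, which is contractible as a cone on $\lk(v)$. The stable range condition $SR_k$ enters precisely here: given the finitely many unimodular vectors appearing as vertices in the image of $f$, one applies $SR_k$ iteratively to produce a unimodular $v$ whose union with each such vertex still spans a free summand. The key input is the classical observation that $SR_k$ gives a row-operation procedure for modifying a unimodular $(k)$-tuple to one whose first entry is a unit, and iterating this builds the required ``general position'' vector. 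The main obstacle will be controlling this general-position argument so that the free-summand condition (rather than mere unimodularity) is preserved; this is the technical heart of van der Kallen's proof in \cite{vdK}, and can be handled either by the filtration approach there or by applying Quillen's fiber theorem (Theorem \ref{Quillen}) to a suitable map from $\cU_n(R)$ to a simpler complex such as $\cU_{n-1}(R/v)$ for a fixed unimodular $v$.

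Finally, for the Cohen--Macaulay conclusion when $k=2$, I would verify the criteria of Proposition \ref{prop:QuillenCM} on the poset of simplices of $\cU_n(R)$. The required upper-interval sphericity statements reduce, via the link computation above, to sphericity of $\cU_m(R)$ for $m < n$, which holds by induction; the lower-interval sphericity reduces to sphericity of boundaries of simplices, which is automatic. Combined, these give the Cohen--Macaulay property, completing the induction.
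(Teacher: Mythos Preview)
The paper does not give a self-contained proof of this theorem: it simply extracts the connectivity statement from \cite[Theorem from \S2.6]{vdK} by setting $\delta=0$, and then verifies the Cohen--Macaulay criteria of Proposition~\ref{prop:QuillenCM} by reducing each of the required sphericity statements back to parts (i) and (ii) of that same cited theorem. So the paper's ``proof'' is a deduction from van der Kallen's results, not an independent argument.

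Your proposal, by contrast, sketches a direct inductive proof. For the Cohen--Macaulay conclusion your plan is essentially identical to the paper's: verify the conditions of Proposition~\ref{prop:QuillenCM} using the link description. Your lower-interval observation (boundaries of simplices) is correct and slightly cleaner than the paper's phrasing.

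However, the connectivity portion of your sketch has two real gaps. First, the ``fibers are torsors for $V^{|\tau|}$, hence contractible'' step in your link computation is not right as a statement about simplicial fibers: two lifts $w$, $w'$ of the same $\bar w\in W$ are \emph{never} joined by an edge in $\lk(\sigma)$ (since $w-w'\in V$ destroys linear independence), so the point-set fiber over a vertex is a discrete set, not contractible. One can still obtain the correct connectivity of $\lk(\sigma)$, but it requires a genuine argument (e.g.\ a poset-fiber theorem applied carefully, or van der Kallen's direct treatment of the relative complex), not the torsor heuristic.

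Second, and more seriously, the ``single vertex $v$'' strategy for killing a sphere $f:S^m\to\cU_n(R)$ does not work. You are asking for one unimodular $v$ such that $\sigma\cup\{v\}$ is a simplex for \emph{every} simplex $\sigma$ in the image of $f$ simultaneously; the stable range condition $SR_k$ does not furnish this. The actual mechanism in \cite{vdK} is a filtration/bad-simplex argument in which one inductively pushes the map off high-dimensional ``bad'' simplices using link connectivity, not a global cone point. You acknowledge that this is ``the technical heart of van der Kallen's proof,'' which is honest, but it means your sketch does not supply an argument for the connectivity statement --- it defers to the same source the paper cites.
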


	Here is how to deduce Theorem \ref{realvdK} from the results of \cite{vdK}. The first assertion is obtained by setting $\delta = 0$ in part (i) of \cite[Theorem from \S2.6]{vdK}. To deduce that $\cU_n(R)$ is Cohen-Macaulay when $k=2$, we apply Proposition \ref{prop:QuillenCM}. Let $\bbU_n(R)$ be the poset of simplices of $\cU_n(R)$. Let $\sigma$ be an $(\ell-1)$-simplex of $\cU_n(R$), let $V$ be the span of its vertices, and choose an isomorphism $V\cong R^\ell$. Then, $\bbU_n(R)_{<\sigma}$ is isomorphic to the $(\ell-2)$-skeleton of $\bbU_\ell(R)$, and hence is spherical by part (i) of \cite[Theorem from \S2.6]{vdK}. The connectivity of $\bbU_n(R)_{>\sigma}$ follows from part (ii) of \cite[Theorem from \S2.6]{vdK} (again setting $\delta = 0$). Now, we choose $\sigma'$ a partial basis containing $\sigma$, of size $\ell'$ and span $V'$, and an isomorphism $\phi:V'\cong R^{\ell'}$. Then, $(\bbU_n(R)_{>\sigma}\cap \bbU_n(R)_{<\sigma'})$ is isomorphic to the $(\ell'-\ell-2)$-skeleton of $\bbU_{\ell'}(R)_{>\phi(\sigma)}$, which we know to be spherical. Thus, Proposition \ref{prop:QuillenCM} applies, and $\cU_n(R)$ is Cohen-Macaulay.

	\begin{proof}[Proof of Theorem \ref{vdK}]
		We begin with the first assertion. It suffices to check that, if $R$ satisfies $SR_k$, then \[\cB_n(R)^{[n-k]} = \cU_n(R)^{[n-k]}.\] Once we have established this fact, by standard arguments we know that \[\pi_i(\cB_n(R)) = \pi_i(\cU_n(R)) = 0\text{ for }i\leq n-k-1.\] Likewise, $\HH_{n-k}(\cB_n(R)) = \HH_{n-k}(\cU_n(R))=0$, so by the Hurewicz theorem we know that $\pi_{n-k}(\cB_n(R)) = 0$, whence the first assertion. 
		
		We prove that $\cB_n(R)^{[n-k]} = \cU_n(R)^{[n-k]}$. Indeed, let $\sigma = \{v_1,\dots, v_{p+1}\}$ be a $p$-simplex, for $p\leq n-k$. Let $V_\sigma = \Span\{v_1,\dots, v_{p+1}\}$, and let $W$ be a complement to $V_\sigma$ in $R^n$. We observe that $W$ is a stably free $R$-module of rank $n-(p+1) \geq n-(n-k+1) = k-1$. By Theorem \ref{BassCancel}, $W$ is a free module, so that $\sigma$ lies in $\cB_n(R)$.
		
		For the second assertion, note that we have already proved that \[\cB_n(R)^{[n-2]} = \cU_n(R)^{[n-2]}\] when $R$ satisfies $SR_2$. It remains to check the $(n-1)$-simplices. But, an $(n-1)$-simplex corresponds to a basis of a rank $n$ direct summand of $R^n$. This must be a basis of $R^n$ itself. Therefore, \(\cB_n(R) = \cU_n(R)\). We already know that $\cU_n(R)$ is Cohen-Macaulay when $R$ satisfies $SR_2$, whence the theorem. 
	\end{proof}

	When $R$ is a Dedekind domain of arithmetic type, the main technical result of \cite{CFP} states that $\cB_n(R)$ is more connected than Theorem \ref{vdK} implies.

	\begin{thm}[Church-Farb-Putman, \cite{CFP}]\label{CFP}
		Let $\cO_S$ be a Dedekind domain of arithmetic type, where $S$ contains at least one non-complex place. Then, $\cB_n(\cO_S)$ is Cohen-Macaulay.
	\end{thm}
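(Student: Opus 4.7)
The plan is to apply Quillen's criterion Proposition \ref{prop:QuillenCM} to the poset $\bbB_n(\cO_S)$ whose nerve is $\sd\,\cB_n(\cO_S)$. For a size-$k$ partial basis $\sigma$ spanning $V$, any choice of free complement $W$ together with an isomorphism $R^n/V\cong \cO_S^{n-k}$ identifies the link poset $\bbB_n(\cO_S)_{>\sigma}$ with $\bbB_{n-k}(\cO_S)$, while $\bbB_n(\cO_S)_{<\sigma}$ is the poset of proper nonempty subsets of $\sigma$ (whose nerve is $S^{k-2}$), and intervals factor analogously as posets of proper nonempty subsets of $\sigma'\setminus\sigma$. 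Induction on $n$ therefore reduces the entire Cohen-Macaulay claim to the single statement that $\cB_n(\cO_S)$ is $(n-2)$-connected.

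For this connectivity improvement, I would follow the Church-Farb-Putman strategy. We have $\cB_n(\cO_S)\subseteq \cU_n(\cO_S)$, and Theorem \ref{realvdK} applied with $SR_3$ (which holds for every Dedekind domain of arithmetic type, by Bass' cancellation theorem) only gives that $\cU_n(\cO_S)$ is $(n-3)$-connected. The task is to promote this by one dimension while lifting spheres $f\colon S^{n-2}\to \cU_n(\cO_S)$ into the smaller complex $\cB_n(\cO_S)$. The obstruction is concentrated on the \emph{bad} simplices of $\cU_n(\cO_S)$, namely those whose span has a non-free complement; these defects are measured by ideal-class data. One introduces an augmented partial-basis complex whose simplices carry ideal-class decorations, proves its high connectivity via Quillen's Theorem \ref{Quillen} (feeding in the $SR_3$ estimates on the relevant fiber posets), and then retracts this augmented complex onto $\cB_n(\cO_S)$ by resolving each bad simplex through combinatorial moves that adjust the decoration.

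The main obstacle is precisely the connectivity of the augmented complex, and this is where the non-complex hypothesis on $S$ enters crucially. Under this assumption, Theorem \ref{E2} guarantees $E_n(\cO_S)=\SL_n(\cO_S)$ for every $n\geq 2$ (via Bass-Milnor-Serre for $n\geq 3$, and via the Euclidean case or Vaserstein's theorem for $n=2$). Each elementary generator $e_{ij}(\lambda)$ translates directly into a simplicial move that permutes ideal-class decorations, and connectedness of the resulting move graph is exactly the combinatorial input that powers the topological connectivity argument for the augmented complex. Without the non-complex hypothesis, $\SL_2(\cO_S)$ can fail to be elementary-generated (Cohn-Nica, Theorem \ref{E2}), the augmentation fails to retract, and the conclusion of the theorem in fact breaks down, by the main theorem of Miller-Patzt-Wilson-Yasaki.
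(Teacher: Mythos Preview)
The paper does not prove this theorem; it is quoted as a result of Church--Farb--Putman from \cite{CFP} and stated without proof. There is therefore no proof in the paper against which to compare your proposal.

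Regarding the proposal itself, the reduction in your first paragraph contains a genuine error. The claimed identification of $\bbB_n(\cO_S)_{>\sigma}$ with $\bbB_{n-k}(\cO_S)$ is false. An element of $\bbB_n(\cO_S)_{>\sigma}$ is a partial basis $\tau\supsetneq\sigma$, and the vectors of $\tau\setminus\sigma$ live in $\cO_S^{\,n}$, not in any fixed complement $W$ nor in $\cO_S^{\,n-k}$. The natural projection to $\cO_S^{\,n}/V\cong\cO_S^{\,n-k}$ is far from injective on such $\tau$: if $w\in\tau\setminus\sigma$ then for every nonzero $v\in V$ the set $\sigma\cup\{w+v\}$ is a distinct partial basis with the same image. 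Consequently the inductive reduction you describe does not go through as stated. Compare the paper's treatment of the analogous Cohen--Macaulay claim for $\cU_n(R)$ immediately after Theorem~\ref{realvdK}: there the sphericity of $\bbU_n(R)_{>\sigma}$ is \emph{not} deduced from that of $\bbU_{n-k}(R)$, but is cited separately as part~(ii) of van der Kallen's theorem. The upward-link connectivity for $\cB_n(\cO_S)$ is likewise part of what Church--Farb--Putman establish directly; it is not a formal consequence of the global connectivity of $\cB_m(\cO_S)$ for smaller $m$.

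Your sketch of the connectivity argument in the second and third paragraphs has roughly the right shape (an auxiliary complex recording ideal-class data, together with a retraction onto $\cB_n$), but it is too schematic to evaluate further, and the link error above means the Cohen--Macaulay reduction resting on it does not stand on its own.
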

	
	\subsection{The Tits Complex}\;
	
	We discuss here the properties of the Tits complex $\cT_n(R)$. 
	\begin{defn}
		Let $R$ be a commutative ring. Denote by $\bbT_n(R)$ the poset of free and cofree summands of $R^n$, where $V\leq W$ when $V$ is a cofree summand of $W$. Note that this is stronger than $V\subseteq W$.
	\end{defn}
	The nerve of $\bbT_n(R)$ is $\cT_n(R)$. Therefore, by analyzing the poset $\bbT_n(R)$, we can understand $\cT_n(R)$.
	\begin{prop}
		Let $f:R\to S$ be a map of rings. Then, there is a simplicial map $\cT_n(R)\to \cT_n(S)$. This map is $\GL_n(R)$-equivariant. This assignment extends $\cT$ into a functor $\textbf{Ring}\to \textbf{Top}$.  
	\end{prop}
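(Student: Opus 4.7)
The plan is to define the induced map on vertices by $V \mapsto S \otimes_R V$, identifying $S \otimes_R V$ with its image in $S \otimes_R R^n = S^n$. If $V$ is a proper nonzero free and cofree summand of $R^n$ of rank $k$, I choose a free complement $W$ of rank $n-k$; tensoring the split decomposition $R^n = V \oplus W$ with $S$ yields $S^n \cong (S \otimes_R V) \oplus (S \otimes_R W)$, exhibiting $S \otimes_R V$ as a free and cofree summand of $S^n$ of rank $k$. Provided $S \ne 0$ it is proper and nonzero, hence a vertex of $\cT_n(S)$. (The case $S = 0$ can be handled by declaring $\cT_n(S) = \emptyset$ and checking the statement is vacuous.)

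\textbf{Simplices, equivariance, functoriality.} Next I would check the assignment extends over simplices. Given a good flag $\cF$ of type $\lambda$, Proposition \ref{prop:goodFlags} furnishes a flag-basis $(v_1,\dots,v_n)$ with $\cF = \Span_\lambda(v_1,\dots,v_n)$. The images $(1 \otimes v_1, \dots, 1 \otimes v_n)$ form a basis of $S^n$, and the sequence of tensored modules is precisely $\Span_\lambda(1 \otimes v_1,\dots,1 \otimes v_n)$, again good of type $\lambda$. Strict inclusions are preserved because the successive quotients $V_{i+1}/V_i$ are free of positive rank, and base change takes them to free $S$-modules of the same positive rank. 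Equivariance follows from the identity $S \otimes_R (gV) = (1 \otimes g)(S \otimes_R V)$ with $1 \otimes g \in \GL_n(S)$ the image of $g$ under $\GL_n(R) \to \GL_n(S)$. Functoriality follows from the canonical isomorphism $T \otimes_S (S \otimes_R V) \cong T \otimes_R V$ for a composition $R \to S \to T$, together with $R \otimes_R V = V$.

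\textbf{Main obstacle.} There is no deep obstacle: this is essentially bookkeeping about the naturality of base change. The only substantive point is confirming that goodness of flags and strictness of inclusions are preserved after tensoring, and both are immediate consequences of the flag-basis characterization in Proposition \ref{prop:goodFlags}.
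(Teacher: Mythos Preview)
Your proposal is correct and takes essentially the same approach as the paper: both define the map via extension of scalars $V \mapsto S \otimes_R V$, with the paper verifying preservation of the poset relation $\leq$ directly (and then invoking nerve functoriality) while you use flag-bases via Proposition~\ref{prop:goodFlags} to the same effect. One small quibble: your parenthetical about $S = 0$ is not quite right---the statement there is not vacuous but simply fails, since there is no map from a nonempty complex to the empty one---though the paper itself silently ignores this edge case.
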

	\begin{proof}
		Consider the extension-of-scalars map 
		\begin{align*}
			\bbT_n(R)&\to\bbT_n(S)\\
			V&\mapsto V\tensor_R S.
		\end{align*}
		Extension of scalars takes free $R$-modules to free $S$-modules, and so sends a free and cofree summand of $R^n$ to such a summand of $S^n$. Therefore, this map is well-defined as a map of sets. Consider $V\leq W$ in $\bbT_n(R)$, so that $V/W$ is free. Since $V,\ W,$ and $V/W$ are all free and therefore flat, we have the exact sequence:
		$$0\to V\tensor_R S \to W\tensor_R S \to V/W \tensor_R S \to 0,$$
		It follows that $(V\tensor_R S)\leq (W\tensor_R S)$. Therefore, extension of scalars defines a map-of-posets $\bbT_n(R)\to \bbT_n(S)$. By functoriality of the nerve, we have the desired map $\cT_n(R)\to \cT_n(S)$.
		
		To see that this map is $\GL_n(R)$-equivariant, note that $\GL_n(R)$ acts on the poset $\bbT_n(R)$ in the natural way, as well as on $\bbT_n(S)$ via the map $\GL_n(R)\to \GL_n(S)$ induced by $f$. Extension of scalars is clearly compatible with these actions. Functoriality of the nerve then implies that the induced map on spaces is $\GL_n(R)$-equivariant.
	\end{proof}
	There are other reasonable generalizations of the Tits building to the category of rings -- say, by considering larger families of flags of free modules. Most of these will be identical when $R$ is local or a PID. We choose this particular definition for two reasons: first, the Tits complex is \emph{pure} for every ring, i.e., every simplex is contained in a simplex of the maximum dimension. Second, our Tits complex is closely connected to the partial basis complex, as we will see in Lemma \ref{span map}.
	
	We denote by $\cB'_n(R)$ the $(n-2)$-skeleton of $\cB_n(R)$, corresponding to those partial bases which are not complete. We denote by $\bbB'_n(R)$ the corresponding poset of incomplete partial bases. There is a map of posets $\bbB'_n(R)\to \bbT_n(R)$ defined by sending a partial basis of $R^n$ to the summand of $R^n$ it spans. Clearly this span is free and cofree. This induces a simplicial map on nerves, which we will denote 
	$$\Span\colon \sd\cB'_n(R)\to \cT_n(R).$$
	\begin{lem}\label{span map}
		The map $\Span\colon \sd \cB'_n(R)\to \cT_n(R)$ is surjective.
	\end{lem}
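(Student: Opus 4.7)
The plan is to show that every simplex of $\cT_n(R)$ lies in the image of $\Span$ by writing it down explicitly using a flag-basis, which is produced by Proposition \ref{prop:goodFlags}.

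First, I would unpack the two sides. A $k$-simplex of $\cT_n(R) = N(\bbT_n(R))$ is a chain in $\bbT_n(R)$, i.e., a good flag
\[ \cF = \bigl(0 \subsetneq V_0 \subsetneq V_1 \subsetneq \dots \subsetneq V_k \subsetneq R^n\bigr), \]
while a $k$-simplex of $\sd\cB'_n(R) = N(\bbB'_n(R))$ is a chain of incomplete partial bases
\[ \sigma_0 \subsetneq \sigma_1 \subsetneq \dots \subsetneq \sigma_k, \]
and the map $\Span$ sends this to the chain $\Span\sigma_0 \subsetneq \dots \subsetneq \Span\sigma_k$.

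Next, given an arbitrary good flag $\cF$ as above of type $\lambda = (\lambda_1, \dots, \lambda_{k+2})$, I would apply Proposition \ref{prop:goodFlags} to produce a flag-basis $(v_1, \dots, v_n)$ of $R^n$ with $V_i = \Span\{v_1, \dots, v_{\lambda_1+\dots+\lambda_{i+1}}\}$ for each $i \leq k$. Setting $r_i := \lambda_1 + \dots + \lambda_{i+1}$ and
\[ \sigma_i := \{v_1, \dots, v_{r_i}\}, \]
each $\sigma_i$ is a subset of the basis $(v_1, \dots, v_n)$, hence a partial basis of $R^n$. Moreover $r_k < n$ since $V_k \subsetneq R^n$, so each $\sigma_i$ is an incomplete partial basis and therefore a vertex of $\bbB'_n(R)$. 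The chain $\sigma_0 \subsetneq \sigma_1 \subsetneq \dots \subsetneq \sigma_k$ is a $k$-simplex of $\sd\cB'_n(R)$ whose image under $\Span$ is exactly $\cF$.

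There is no real obstacle here: the content of the lemma is precisely that every good flag admits a flag-basis, which is already packaged in Proposition \ref{prop:goodFlags}. The only thing to verify beyond invoking that proposition is that the terminal partial basis $\sigma_k$ is incomplete, which follows from the strict inclusion $V_k \subsetneq R^n$ built into the definition of the Tits complex.
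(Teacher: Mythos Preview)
Your proof is correct and takes essentially the same approach as the paper: both pick a simplex of $\cT_n(R)$, produce a flag-basis for the corresponding good flag, and observe that the initial segments of this basis form a chain of incomplete partial bases mapping to the given simplex. The only cosmetic difference is that you cite Proposition~\ref{prop:goodFlags} directly, whereas the paper redoes the inductive extension of bases inline.
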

	\begin{proof}
		Let $\sigma = \{[V_1], \dots, [V_{k+1}]\}$ be a simplex of $\cT_n(R)$ corresponding to a flag $$0\subsetneq V_1 \subsetneq \dots \subsetneq V_{k+1} \subsetneq R^n.$$ Choose a basis of $V_1$. Since $V_1$ is cofree in $V_2$, this basis can be extended to a basis of $V_2$. Similarly, we can extend to a basis of $V_3$, and so on through $V_{k+1}$. This chain of partial bases defines a simplex of $\sd \cB'_n(R)$ that maps to $\sigma$. Since $\sigma$ was arbitrary, the map is surjective.
	\end{proof}

	\section{Connectivity of the Tits Complex}
	The goal of this section is to prove Theorem \ref{Connectivity}.
	We will prove this by induction, on both the degree of connectivity, and on the following natural filtration of $\cT_n(R)$:
	\begin{defn}
		Let $n>m\geq 1$, and let $R$ be a commutative ring. Define $\cT_{n,m}(R)$ to be the full subcomplex of $\cT_n(R)$ whose vertices consist of free and cofree summands of $R^n$ of rank at most $m$. 
	\end{defn}
	We will first prove the following statement:
	\begin{thm}\label{TitsFiltConn}
		Let $n>m\geq 1$, and let $R$ be a ring such that:
		\begin{enumerate}
			\item $\cB_n(R)$ is Cohen-Macaulay of dimension $n-2$.
			\item $\SL_k(R)$ is generated by elementary matrices for all $k\geq 1$.
			\item The natural map $K_2(3,R)\to K_2(k,R)$ is surjective for all $k\geq 3$. 
		\end{enumerate} 
		Then, $\cT_{n,m}(R)$ is $(m-2)$-connected.
	\end{thm}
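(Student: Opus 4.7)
The plan is to proceed by induction on $m$, treating $n$ uniformly. The base case $m=1$ is immediate: $Re_1 \subset R^n$ is a rank-$1$ free-and-cofree summand, so $\cT_{n,1}(R)$ is nonempty.

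For the inductive step ($m\geq 2$), I would decompose $\cT_{n,m}(R) = \cT_{n,m-1}(R)\cup \bigsqcup_v \str(v)$, where $v$ ranges over the rank-$m$ vertices. The open stars are pairwise disjoint, since no simplex can contain two distinct rank-$m$ vertices (both would need to sit in a common chain of proper summands, forcing one to properly contain the other). The link $\lk_{\cT_{n,m}(R)}(v)$ consists of good flags properly contained in $v\cong R^m$, and is therefore canonically isomorphic to $\cT_m(R) = \cT_{m,m-1}(R)$, which by the inductive hypothesis (applied with $n'=m$, $m'=m-1$) is $(m-3)$-connected. Excision combined with the long exact sequence of the pair yields
\[
\pi_k\bigl(\cT_{n,m}(R),\cT_{n,m-1}(R)\bigr)\cong \bigoplus_v \pi_{k-1}(\cT_m(R)) = 0 \qquad (k\leq m-2).
\]
Together with the inductive hypothesis that $\cT_{n,m-1}(R)$ is $(m-3)$-connected, this already shows $\cT_{n,m}(R)$ is $(m-3)$-connected, and that the inclusion induces a surjection $\pi_{m-2}(\cT_{n,m-1}(R))\twoheadrightarrow \pi_{m-2}(\cT_{n,m}(R))$.

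To finish, I must show that this surjection on $\pi_{m-2}$ is the zero map. For this I would use the span map $\Span\colon \sd\cB_n^{(m-1)}(R)\to \cT_{n,m}(R)$ of Lemma \ref{span map}. Hypothesis (1) implies the source is $(m-2)$-connected, so it suffices to show that $\Span$ is $\pi_{m-2}$-surjective. The fibers of $\Span$ above a rank-$r$ vertex $V$ are the simplex posets of $\cB_r(R)$ (Cohen-Macaulay by hypothesis (1)), and the upper intervals $(\bbT_{n,m}(R))_{>V}$ are naturally isomorphic to $\bbT_{n-r,m-r}(R)$ via $W\mapsto W/V$, whose nerves are controlled by the inductive hypothesis. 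For $m\geq 4$, I would deduce $H_{m-2}(\cT_{n,m}(R))=0$ via a Quillen-style filtration of the span map (Theorem \ref{Quillen}), and then invoke Hurewicz, since by this point $\cT_{n,m}(R)$ will already be simply connected.

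The main obstacle is the case $m=3$, where Hurewicz does not apply and one needs $\pi_1(\cT_{n,3}(R)) = 0$ directly. This is where hypotheses (2) and (3) become essential. I would argue by van Kampen on an $\SL_n(R)$-invariant open cover of $\cT_{n,3}(R)$: combining the transitivity of the $\SL_n(R)$-action on good flags (Corollary \ref{flagTrans}) with the Steinberg presentation of $\SL_n(R) = E_n(R)$ furnished by Theorem \ref{thm:SLnPres}, the resulting presentation of $\pi_1$ would collapse via the Steinberg relations. The case $m=2$ follows immediately from the fact that $\Span$ is surjective and its source $\sd\cB_n^{(1)}(R)$ is connected.
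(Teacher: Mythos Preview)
Your overall strategy matches the paper's: induction on $m$, with separate arguments for connectedness ($m=2$), simple connectedness ($m=3$) via the Steinberg presentation, and higher connectivity ($m\geq 4$) via the span map and Hurewicz. However, there is a genuine gap in the $m\geq 4$ step.

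You propose to deduce $H_{m-2}(\cT_{n,m}(R)) = 0$ by applying Theorem~\ref{Quillen} to $\Span\colon \sd\cB_n(R)^{[m-1]} \to \cT_{n,m}(R)$. But that theorem takes as a \emph{hypothesis} that the target poset is spherical; here this means $\cT_{n,m}(R)$ must already be $(m-1)$-spherical, i.e.\ $(m-2)$-connected --- which is exactly what you are trying to prove. Your verification of the fiber and upper-interval conditions is fine, but the sphericity of the target is not yet available, so the appeal to Theorem~\ref{Quillen} is circular as written.

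The paper avoids this by shifting the level at which Quillen is applied. One applies Theorem~\ref{Quillen} to $\Span\colon \sd\cB_n(R)^{[m-2]} \to \cT_{n,m-1}(R)$; this target is $(m-2)$-spherical by the inductive hypothesis, so the theorem yields surjectivity of $\Span_*$ on $H_{m-2}$. One then uses the commutative square
\[
\xymatrix{
H_{m-2}(\cB_n(R)^{[m-2]}) \ar[r]^{0} \ar@{->>}[d]_{\Span_*} & H_{m-2}(\cB_n(R)^{[m-1]}) \ar[d]^{\Span_*} \\
H_{m-2}(\cT_{n,m-1}(R)) \ar[r] & H_{m-2}(\cT_{n,m}(R)),
}
\]
where the top map vanishes because $\cB_n(R)$ is $(m-2)$-connected. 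A diagram chase forces the bottom map to be zero; together with the cofiber sequence (your link computation, which is Proposition~\ref{FiltQuot}) this gives $H_{m-2}(\cT_{n,m}(R)) = 0$, and then Hurewicz finishes as you say.

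Two smaller remarks. Your displayed formula $\pi_k(\cT_{n,m},\cT_{n,m-1}) \cong \bigoplus_v \pi_{k-1}(\cT_m(R))$ is not literally correct --- homotopy groups do not satisfy excision, and Blakers--Massey only identifies the relative group with $\pi_k$ of the quotient in a range; the vanishing conclusion survives, but the paper sidesteps this by working in homology from the outset. Your $m=3$ sketch is also rather thin: ``van Kampen on an invariant cover'' does not by itself produce a presentation governed by the Steinberg relations. The paper instead constructs an explicit equivariant map from the Cayley graph of $\SL_n(R)$ (with respect to an augmented generating set) into $\cT_{n,m}(R)$, shows it is $\pi_1$-surjective by pushing loops down into $\cT_{n,2}(R)$ and lifting segment-by-segment, and then checks that each defining relator from Theorem~\ref{thm:SLnPres} maps to a loop lying in a single star (hence nullhomotopic).
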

	The proof of this result comprises \S4.1--4.3. Our strategy is induction on the degree of connectivity. Our first base case is $m=1$.
	\begin{rem}\label{rem:nonempty}
		The space $\cT_{n,1}(R)$ is nonempty, since $[Re_1]\in\cT_{n,1}$. Therefore, $\cT_{n,1}(R)$ is $(-1)$-connected.
	\end{rem}
	Our second base case is to show that $\cT_{n,m}(R)$ is connected when $m\geq 2$. This is true quite generally, and is proved in that generality in \S4.1. Our third base case, treated in \S4.2, is simple-connectivity for $m\geq 3$. This is more delicate, and is where we require conditions (2) and (3). In \S4.3 we compute the homology of $\cT_{n,m}(R)$ using (1), showing that it is concentrated in the top dimension. 
	
	Finally, in \S4.4, we give a proof of Theorem \ref{Connectivity}. This is an immediate consequence of Theorem \ref{TitsFiltConn} in all but a few cases.
	
	\subsection{Path-connectedness of $\cT_{n,m}(R)$}\;
	
	In this section, we prove that $\cT_{n,m}(R)$ is connected when $m\geq 2$. We do this twice, with the second proof giving additional information needed in \S4.2. Our first proof is an immediate deduction from the connectivity of $\cB_n(R)$:
	\begin{thm}\label{0connv2}
		Let $n>m\geq 2$, and let $R$ be a ring satisfying $SR_n$. Then, $\cT_{n,m}(R)$ is path-connected.
	\end{thm}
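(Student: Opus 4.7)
The plan is to transport the $0$-connectedness of the partial basis complex $\cB_n(R)$ into a path-connectedness statement for $\cT_{n,m}(R)$. Since $R$ satisfies $SR_n$, Theorem \ref{vdK} applied with $k=n$ gives that $\cB_n(R)$ is $(n-n)=0$-connected; equivalently, any two unimodular vectors of $R^n$ can be joined by a sequence of unimodular vectors whose consecutive pairs extend to a basis of $R^n$.

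First I would reduce to connecting rank-$1$ vertices. For any vertex $[V]$ of $\cT_{n,m}(R)$, fix a free basis of $V$ and let $v$ be one of its basis vectors. Then $Rv$ is cofree in $V$ (its complement is the span of the remaining basis vectors), and $V$ is cofree in $R^n$ by hypothesis, so by Proposition \ref{prop:goodFlags} the chain $0 \subsetneq Rv \subsetneq V \subsetneq R^n$ is good. Hence $\{[Rv],[V]\}$ is an edge of $\cT_{n,m}(R)$, and every vertex is adjacent to a rank-$1$ vertex.

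Next I would connect any two rank-$1$ vertices $[Rv]$ and $[Rw]$. Using path-connectedness of $\cB_n(R)$, write $v=w_0,w_1,\ldots,w_k=w$ with each $\{w_i,w_{i+1}\}$ extending to a basis of $R^n$. Let $W_i := Rw_i \oplus Rw_{i+1}$; this is a rank-$2$ free and cofree summand of $R^n$, so (using $m\geq 2$) $[W_i]$ is a vertex of $\cT_{n,m}(R)$, and each of $Rw_i, Rw_{i+1}$ is cofree in $W_i$. The zig-zag
\[ [Rw_0] - [W_0] - [Rw_1] - [W_1] - \cdots - [W_{k-1}] - [Rw_k] \]
is then a path in $\cT_{n,m}(R)$, concluding the proof.

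The only conceptual point is that edges of $\cB_n(R)$ do not directly correspond to edges of $\cT_{n,m}(R)$, which is why we interpose a rank-$2$ vertex between each consecutive pair of rank-$1$ vertices; this is exactly where the hypothesis $m\geq 2$ enters. Beyond this observation, the argument is a direct consequence of Theorem \ref{vdK} and Proposition \ref{prop:goodFlags}, with no serious technical obstacle.
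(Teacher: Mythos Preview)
Your proof is correct and is essentially the same as the paper's: the paper observes that the surjective continuous map $\Span\colon \sd\cB_n(R)^{[m-1]}\to \cT_{n,m}(R)$ (Lemma~\ref{span map}) carries path-connectedness of $\cB_n(R)$ to $\cT_{n,m}(R)$, and your zig-zag through rank-$2$ summands is exactly the image under $\Span$ of an edge-path in $\sd\cB_n(R)^{[1]}$, written out by hand.
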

	\begin{proof}
		By van der Kallen's theorem (Theorem \ref{vdK}), we know that $\cB_n(R)$ is path-connected. Therefore, since $m\geq 2$, so is its $(m-1)$-skeleton $\cB_n(R)^{[m-1]}$. By Lemma \ref{span map}, the map $$\Span: \cB_n(R)^{[m-1]}\to \cT_{n,m}(R)$$ defined in \S3.2 is surjective and continuous. Therefore, $\cT_{n,m}(R)$ is path-connected.
	\end{proof}
	
	We now give a second proof, which will additionally furnish us with a map from the Cayley graph of $\SL_n(R)$. We will need this map in \S\ref{section:1Conn} to prove simple-connectedness.
	
	To begin, we recall the definition of the Cayley graph:
	\begin{defn}
		Let $G$ be a group, and let $S$ be a set of generators for $G$. The \emph{Cayley graph} $\Gamma(G,S)$ is the graph whose vertices are the elements of $G$, and with an edge $\{g,g'\}$ if $g' = gs$ for some $s\in S$. The space $\Gamma(G,S)$ has a natural action of $G$ by graph isomorphisms, induced by left multiplication. 
	\end{defn}
	
	Since $S$ generates $G$, $\Gamma(G,S)$ is always connected. 
	
	The obvious choice of a generating set for $\SL_n(R)$ is the set of elementary matrices. We choose a slightly larger set, for the purposes of \S4.2. For $r\in R^\times$, define $$t(r) = \begin{bmatrix} r & & & \\ & r^{-1} & & \\ & & 1 & \\  & & & \ddots \end{bmatrix}\in \SL_n(R).$$
	This is a product of elementary matrices, using the identity \begin{align*}
		\begin{bmatrix}
			r & 0 \\
			0 & r^{-1} \\
		\end{bmatrix} = 
	\begin{bmatrix}
		1 & r-1 \\
		0 & 1 \\
	\end{bmatrix}
	.
	\begin{bmatrix}
		1 & 0 \\
		1 & 1 \\
	\end{bmatrix}
	.
	\begin{bmatrix}
		1 & r^{-1}-1 \\
		0 & 1 \\
	\end{bmatrix}
	.
	\begin{bmatrix}
		1 & 0 \\
		-r & 1 \\
	\end{bmatrix}.
	\end{align*}
	
	We define the generating set:
	$$\cS = \{e_{ij}(\lambda)\mid i\ne j,\ \lambda\in R\} \cup \{t(r) \mid r \in R^\times\}.$$
	Adding these extra generators only adds the relations $$t(r) = e_{12}(r-1)e_{21}(1)e_{12}(r^{-1}-1)e_{21}(-r)$$ to the presentation of $\SL_n(R)$ found in \S\ref{Ktheory}.
	
	\begin{thm}\label{0conn}
		Let $n>m\geq 2$. For a ring $R$ such that $\SL_n(R)$ is generated by elementary matrices, the following hold:
		\begin{itemize}
			\item The complex $\cT_{n,m}(R)$ is path-connected.
			\item There is a continuous $\SL_n(R)$-equivariant map $$\phi\colon \Gamma\left(\SL_n(R),\cS\right)\to \cT_{n,m}(R)$$ which sends the vertex set of $\Gamma\left(\SL_n(R),\cS\right)$ surjectively onto $\cT_{n,1}(R)$.
		\end{itemize} 
	\end{thm}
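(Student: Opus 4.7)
The plan is to build $\phi$ first on vertices of the Cayley graph in the obvious way, check it is $\SL_n(R)$-equivariant, then extend across the edges by a short case analysis on the three families of generators in $\cS$. Path-connectedness of $\cT_{n,m}(R)$ will then be an immediate consequence.

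On vertices I define $\phi(g) := [R(ge_1)] \in \cT_{n,1}(R)$. This is well-defined because $ge_1$ is the first column of $g\in\SL_n(R)$, so $\{ge_1,\dots,ge_n\}$ is a basis of $R^n$ and thus $R(ge_1)$ is free and cofree of rank $1$. Left multiplication by $h\in\SL_n(R)$ sends $\phi(g) = [Rge_1]$ to $[Rhge_1] = \phi(hg)$, giving equivariance. For surjectivity onto $\cT_{n,1}(R)$, any $[V]\in\cT_{n,1}(R)$ has a basis vector $v$ which extends to a basis $\{v,v_2,\dots,v_n\}$ of $R^n$ by definition of $\cT_{n,1}$. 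The matrix with these columns lies in $\GL_n(R)$; after rescaling $v_2$ by the inverse of the determinant we obtain a $g\in\SL_n(R)$ with $ge_1 = v$, so $\phi(g) = [V]$.

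Next I extend $\phi$ across each edge $\{g, gs\}$ of $\Gamma(\SL_n(R),\cS)$ by sending it into $\cT_{n,m}(R)$ case by case. For $s = e_{ij}(\lambda)$ with $j\neq 1$, the first column of $s$ is $e_1$, so $se_1 = e_1$ and the edge collapses to the single vertex $\phi(g)$. For $s = t(r)$, we have $se_1 = re_1$, and since $r\in R^\times$ we get $Rgse_1 = Rge_1$, so again $\phi(g)=\phi(gs)$ and the edge collapses. The interesting case is $s = e_{i1}(\lambda)$, where $se_1 = e_1 + \lambda e_i$, so $\phi(gs) = [R(ge_1 + \lambda ge_i)]$. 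Both $Rge_1$ and $R(ge_1 + \lambda ge_i)$ are contained in the rank-$2$ summand $W := Rge_1 \oplus Rge_i$, which is free and cofree since $\{ge_1,ge_i\}$ is part of the basis $\{ge_1,\dots,ge_n\}$ of $R^n$. Because $m\geq 2$, the vertex $[W]$ lies in $\cT_{n,m}(R)$, and it forms a flag with each of the two rank-$1$ summands. I send the edge $\{g,gs\}$ to the concatenation of the two edges $\phi(g)\to [W]\to \phi(gs)$ in $\cT_{n,m}(R)$. Checking $\SL_n(R)$-equivariance on each family is routine: left translation by $h$ sends the rank-$2$ vertex $[W]$ to $[hW]$, which is the rank-$2$ witness for the translated edge $\{hg, hgs\}$, so a consistent $\SL_n(R)$-equivariant choice is available (and can be enforced by choosing one path per orbit of edges).

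Finally, path-connectedness of $\cT_{n,m}(R)$ follows: $\Gamma(\SL_n(R),\cS)$ is connected because $\cS$ generates $\SL_n(R)$ by hypothesis together with the identity expressing $t(r)$ in terms of elementary matrices from \S\ref{Ktheory}, so $\phi(\Gamma)$ is a connected subspace containing all of $\cT_{n,1}(R)$. Any vertex $V$ of $\cT_{n,m}(R)$ of rank $k\geq 2$ admits a flag-basis $(v_1,\dots,v_n)$ for $(0\subsetneq V\subsetneq R^n)$ by Proposition \ref{prop:goodFlags}, and then $[Rv_1]\in\cT_{n,1}(R)$ forms an edge with $[V]$ in $\cT_{n,m}(R)$. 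Hence every vertex is connected to $\phi(\Gamma)$, proving path-connectedness. The main subtlety in the argument is really just the case $s=e_{i1}(\lambda)$, where one must produce the rank-$2$ intermediate vertex and verify it lives in $\cT_{n,m}(R)$; this is exactly what forces the hypothesis $m\geq 2$.
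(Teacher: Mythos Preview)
Your proof is correct and follows essentially the same approach as the paper: define $\phi$ on vertices by $g\mapsto g\cdot Re_1$, collapse edges for generators fixing $Re_1$, and for $e_{i1}(\lambda)$ route through the rank-$2$ summand $Re_1\oplus Re_i$ (translated by $g$), then deduce path-connectedness from connectedness of the Cayley graph plus the observation that every higher-rank vertex is adjacent to a rank-$1$ vertex. The only cosmetic difference is that the paper first reduces to defining $\phi$ on edges of the form $\{\id,s\}$ and then extends equivariantly, whereas you write the formula for a general edge $\{g,gs\}$ directly; your formula is already the equivariant extension of theirs.
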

	\begin{proof}
		If $V$ is a free and cofree summand of $R^n$, it contains a 1-dimensional free and cofree summand $L$, and so there is an edge from $[V]$ to $[L]$. By this fact, we will have proved the first assertion if we can show that all 1-dimensional summands lie in a single path component. Since Cayley graphs are path-connected, this is an immediate consequence of the second assertion. 
		
		We now prove the second assertion. We explicitly construct the desired map $\phi$. We first define it on vertices, sending the vertex $\gamma$ to $\gamma\cdot Re_1$. Since $\SL_n(R)$ acts transitively on $\cT_{n,1}(R)$ by Corollary \ref{flagTrans}, this is surjective onto $\cT_{n,1}(R)$.
		
		Notice that every edge of the Cayley graph is of the form $\{\gamma, \gamma s\},$ for some $\gamma\in \SL_n(R)$ and $s\in\cS$. Using the natural action of $\SL_n(R)$, the edge $\{\gamma, \gamma s\}$ is equal to $\gamma\cdot \{\id,s\}$.  It therefore suffices to define the image of $\{\id,s\}$ for each generator $s\in S$. There will then be a unique $\SL_n(R)$-equivariant extension over all of $\Gamma(\SL_n(R),\cS)$. 
		
		Defining the image of $\{\id ,s\}$ amounts to choosing a path in $\cT_{n,m}(R)$ from $Re_1$ to $s\cdot Re_1$. If $s = e_{i,j}(\lambda)$ with $j\ne 1$, or if $s=t(r)$, then $s\cdot Re_1 = Re_1$, and we choose the constant path. If $s = e_{i,1}(\lambda)$, then $s\cdot Re_1 = R(e_1+\lambda e_i)$ and we choose the path: \[\xymatrix{ [Re_1] \ar@{-}[r] &[Re_1\oplus Re_i] \ar@{-}[r] & [R(e_1+\lambda e_i)] }\qedhere\]
	\end{proof}
	
	\subsection{Simple Connectedness of $\cT_{n,m}(R)$}\label{section:1Conn}\;
	
	Our goal in this section is to show that $\cT_{n,m}(R)$ is simply connected for $m\geq 3$, using the approach of \cite{PutmanTrick}. 
	
	\begin{thm}\label{1conn}
		Let $n>m\geq 3$. Let $R$ be a ring such that $\SL_k(R)$ is generated by elementary matrices for all $k\geq 1$, and the natural map $K_2(3,R)\to K_2(n,R)$ is surjective. Then, the complex $\cT_{n,m}(R)$ is simply connected.
	\end{thm}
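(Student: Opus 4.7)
The plan is to apply the Putman trick of \cite{PutmanTrick}: extend the $\SL_n(R)$-equivariant map $\phi\colon\Gamma(\SL_n(R),\cS)\to\cT_{n,m}(R)$ of Theorem \ref{0conn} to a $\pi_1$-surjective map out of a simply-connected 2-complex, forcing $\pi_1(\cT_{n,m}(R))=0$. By the hypotheses and Theorem \ref{thm:SR2SK1}, $\SL_n(R)=E_n(R)$; by Theorem \ref{thm:SLnPres}, this group admits a presentation by the Steinberg generators $x_{ij}(\lambda)$ subject to the Steinberg relations together with any residual relations of $E_3(R)$. Adjoining the $t(r)$, $r\in R^\times$, contributes only one tautological relation per $r$, expressing $t(r)$ as a product of four elementary matrices. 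Let $\tilde K$ denote the Cayley 2-complex of this augmented presentation; $\tilde K$ is simply connected as the universal cover of the presentation 2-complex.

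\textbf{Extending $\phi$ to $\tilde K$.} The principal work is to extend $\phi$ to an $\SL_n(R)$-equivariant map $\tilde\phi\colon\tilde K\to\cT_{n,m}(R)$ by filling in each 2-cell. By equivariance, this reduces to producing, for each defining relation $r$, a null-homotopy disk in $\cT_{n,m}(R)$ bounded by the loop $\phi(\ell_r)$ based at $[Re_1]$. Each such relation involves Steinberg generators on at most three coordinate indices $\{i_1,i_2,i_3\}$, so the loop $\phi(\ell_r)$ is supported in the full subcomplex of $\cT_{n,m}(R)$ on free cofree summands of $\Span\{e_{i_1},e_{i_2},e_{i_3}\}$. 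Since $m\geq 3$, this subcomplex contains all the 2-simplices of ``rank-3 apartment'' type, and from these the required null-homotopies can be stitched together by a direct but tedious case analysis.

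\textbf{Showing $\tilde\phi$ is $\pi_1$-surjective.} One checks using transitivity (Corollary \ref{flagTrans}) that the image of $\phi$ is exactly the subcomplex $\cT_{n,2}(R)$, so it remains to verify that the inclusion $\cT_{n,2}(R)\hookrightarrow\cT_{n,m}(R)$ induces a surjection on $\pi_1$. Any simplicial loop $\gamma$ in $\cT_{n,m}(R)$ passing through a vertex $V$ of rank $r\geq 3$ can be homotoped off $V$ by detouring through a rank-1 subsummand of $V$: the link of $V$ in $\cT_{n,m}(R)$ decomposes as a join involving the Tits complexes of $V\cong R^r$ and $R^n/V\cong R^{n-r}$, providing enough 2-simplices for the detour. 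An inductive argument then shrinks any loop into $\cT_{n,2}(R)$.

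\textbf{Main obstacle.} The bulk of the proof lies in the extension step. The three-index commutator relation $[x_{ij}(\lambda),x_{jk}(\mu)]=x_{ik}(\lambda\mu)$ produces a hexagonal loop whose filling requires several carefully-chosen 2-simplices in the rank-3 subcomplex, and the residual $E_3(R)$ relations --- whose precise form is not specified by Theorem \ref{thm:SLnPres} --- must be handled uniformly by the same rank-3 subcomplex argument.
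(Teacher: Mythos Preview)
Your proposal follows the same broad strategy as the paper---use the equivariant map $\phi$ from the Cayley graph, fill in $2$-cells over the defining relations, and argue that every loop in $\cT_{n,m}(R)$ arises from the Cayley graph---but there is a genuine gap in your $\pi_1$-surjectivity step, and you misidentify where the real difficulty lies.

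\textbf{The gap.} You argue that the image of $\phi$ equals $\cT_{n,2}(R)$ and that every loop in $\cT_{n,m}(R)$ can be homotoped into $\cT_{n,2}(R)$. But this only shows every loop is homotopic to a loop \emph{lying in the image subcomplex}; it does not show such a loop is homotopic to $\phi(\tilde\gamma)$ for an actual loop $\tilde\gamma$ in the Cayley graph. A surjective map of graphs need not be surjective on $\pi_1$ (consider a degree-$2$ map $S^1\to S^1$). The paper closes this gap with two ingredients you omit. First (Lemma~\ref{lem:preprocessLoops}), each segment $[L_1]-[P]-[L_2]$ of a loop in $\cT_{n,2}(R)$ is homotoped, using that $\SL_2(R)$ is generated by elementary matrices, into a concatenation of segments of the specific liftable form produced by $\phi$. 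Second (Lemma~\ref{lem:stab}), the stabilizer of $Re_1$ in $\SL_n(R)$ is generated by elements of $\cS$ that fix $Re_1$; this is needed because the segment-by-segment lift of a loop terminates at some stabilizer element rather than at $\id$, and one must close it up using a path in the Cayley graph whose $\phi$-image is constant. Both steps are essential and not automatic.

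\textbf{The overcomplication.} You flag the extension step as the ``main obstacle'' and propose a tedious case analysis to fill relation loops inside a rank-$3$ subcomplex. The paper's argument is much cleaner: any relation whose elementary generators have indices in a three-element set $\{1,i,j\}$ yields a $\phi$-image lying entirely in the \emph{star} of the vertex $[Re_1\oplus Re_i\oplus Re_j]$ (present in $\cT_{n,m}$ since $m\geq 3$), and stars are contractible---no explicit $2$-simplices are needed. This disposes of the $K_2(3,R)$ relations and the $t(r)$ relation at once. Note also that your claim ``each relation involves at most three indices'' fails for the four-index Steinberg commutators $[x_{ij}(\lambda),x_{k\ell}(\mu)]=1$; the paper treats these separately by observing that two of the four edge-images are constant and the remaining two cancel.
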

	
	We will prove Theorem \ref{1conn} at the end of the section. We begin with some preliminary results.
	
	\begin{lem}\label{lem:loop1skel}
		Let $R$ either satisfy $SR_3$, or be such that $\SL_k(R)$ is generated by elementary matrices for all $k$. Then, every loop based at $[Re_1]$ in $\cT_{n,m}(R)$ is homotopic rel basepoints to a loop in $\cT_{n,2}(R)$. 
	\end{lem}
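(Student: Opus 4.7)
The plan is a downward induction on the maximum rank $r$ of a vertex appearing in an edge-loop representative of $\gamma$: once $r \leq 2$ the loop already lies in $\cT_{n,2}(R)$, so there is nothing to prove. By simplicial approximation I may first represent $\gamma$ as an edge loop $[Re_1] = [V_0] - [V_1] - \cdots - [V_k] = [Re_1]$ in $\cT_{n,m}(R)$, and at each inductive pass eliminate every vertex realising the current maximal rank.

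Fix $i$ with $\rk V_i = r \geq 3$. Maximality forces $V_{i-1}, V_{i+1} \subsetneq V_i$, since $V_i \subsetneq V_{i\pm 1}$ would produce a neighbor of rank $>r$; if $V_{i-1} = V_{i+1}$ the subpath backtracks and can simply be deleted, so assume they are distinct. Identifying $V_i \cong R^r$, any free and cofree summand of $V_i$ is automatically free and cofree in $R^n$ (adjoin a free complement of $V_i$ in $R^n$ to an internal complement in $V_i$), giving an inclusion $\cT_{r,r-1}(V_i) \hookrightarrow \cT_{n,r-1}(R)$ that contains both $[V_{i-1}]$ and $[V_{i+1}]$.

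Next I invoke path-connectedness of $\cT_{r,r-1}(R)$ to pick an edge path $\tau = [V_{i-1}] = [W_0] - [W_1] - \cdots - [W_p] = [V_{i+1}]$ inside $\cT_{r,r-1}(V_i)$. This is supplied either by Theorem \ref{0connv2} under the $SR_3$ hypothesis (using $SR_3 \Rightarrow SR_r$ since $r \geq 3$), or by Theorem \ref{0conn} under the elementary-generation hypothesis; both apply because $r \geq 3$ gives $r > r-1 \geq 2$. I then cone $\tau$ over $[V_i]$: for each edge $[W_j] - [W_{j+1}]$ of $\tau$, the summands $W_j, W_{j+1}$ are comparable proper cofree submodules of $V_i$, and so the chain $W_j \subsetneq W_{j+1} \subsetneq V_i$ (after reordering if needed) is a good flag by Proposition \ref{prop:goodFlags}, producing a $2$-simplex $\{[W_j], [W_{j+1}], [V_i]\}$ in $\cT_{n,m}(R)$. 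The union of these $2$-simplices is a triangulated disk whose boundary is $\tau$ concatenated with the reverse of $[V_{i-1}] - [V_i] - [V_{i+1}]$, yielding the desired rel-endpoint homotopy.

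Substituting $\tau$ for the subpath strictly decreases the number of rank-$r$ vertices in $\gamma$, and after finitely many such substitutions the maximal vertex rank drops to $r-1$; the induction terminates at $r=2$. The main obstacle is securing path-connectedness of $\cT_{r,r-1}(V_i)$ for every $3 \leq r \leq m$, which is precisely why the two alternative hypotheses on $R$ are imposed; granted that $\tau$ exists, the coning construction is entirely formal.
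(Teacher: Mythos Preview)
Your proof is correct and follows essentially the same approach as the paper: both argue by downward induction on the maximal rank of a vertex in the loop, both observe that neighbors of a maximal-rank vertex $V_i$ must be proper cofree summands, and both use connectedness of $\cT_r(V_i)\cong\cT_r(R)$ (via Theorem~\ref{0connv2} under $SR_3$ or Theorem~\ref{0conn} under elementary generation) to replace the segment through $[V_i]$ by a path in the link. Your explicit coning-over-$[V_i]$ construction of the triangulated disk is simply an unwinding of the paper's appeal to contractibility of $\str[V_i]$.
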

	\begin{proof}
		Let $\gamma$ be such a loop. Without loss of generality, it is a simplicial loop in the 1-skeleton of $\cT_{n,m}(R)$. We will show that if $m\geq 3$, then $\gamma$ may be homotoped into the 1-skeleton of $\cT_{n,m-1}(R)$. Applying this repeatedly proves the result. 
		
		Let $[V]$ be a $m$-dimensional summand appearing in $\gamma$, and let $W_1$ and $W_2$ be the summands encountered immediately before and after $V$. Note that the ranks of $W_1$ and $W_2$ must be strictly smaller than the rank of $V$, since $V$ has maximal rank in $\cT_{n,m}(R)$, and each edge corresponds to the inclusion of a direct summand. 
		
		Since $V$ is $m$-dimensional, the link of $[V]$ in $\cT_{n,m}(R)$ is the realization of the poset of free and cofree direct summands of $R^n$ contained in $V$. This poset is isomorphic to the poset of direct summands of $V$, so $\lk [V] \cong \cT_{m}(R).$ Since $m\geq 3$ and $R$ satisfies the hypotheses of at least one of Theorem \ref{0connv2} and Theorem \ref{0conn}, we know that $\lk [V]$ is connected. Choose a simplicial path in $\lk [V]$ from $[W_1]$ to $[W_2]$. Since the star $\str[V]$ is contractible, the segment $[W_1] - [V] - [W_2]$ is homotopic to this path. This extends to a homotopy from $\gamma$ to a loop that misses $[V]$. Since $\lk [V]$ is a subset of $\cT_{n,m-1}(R)$, this homotopy reduces the number of $m$-dimensional summands in $\gamma$ by 1. Repeating this process, we may homotope $\gamma$ into $\cT_{n,m-1}(R)$, as desired.
	\end{proof}
	We need a fact about the stabilizer of $Re_1$:
	\begin{lem}\label{lem:stab}
		Let $R$ be such that $\SL_k(R)$ is generated by elementary matrices for all $k$. Then, the $\SL_n(R)$-stabilizer of $Re_1$ is generated by the subset of $\cS$ consisting of elements stabilizing $Re_1$:  
		$$\{e_{ij}(\lambda)\mid i\ne j,\ \lambda\in R,\ j\ne 1 \}\cup \{t(r)\mid r\in R^\times\}$$
	\end{lem}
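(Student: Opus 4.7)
The plan is to reduce an arbitrary element $g$ of the stabilizer to the identity by right-multiplying with the allowed generators. Since $g \in \SL_n(R)$ satisfies $g \cdot Re_1 = Re_1$ and $g$ is invertible, we have $g\, e_1 = u\, e_1$ for some unit $u \in R^\times$, so $g$ has block form
\[ g = \begin{bmatrix} u & v \\ 0 & A \end{bmatrix} \]
with $v$ a row vector of length $n-1$ and $A \in M_{n-1}(R)$ satisfying $\det A = u^{-1}$. Call $H$ the subgroup generated by the listed elements; note that $H$ is contained in the stabilizer of $Re_1$, and the goal is to show $g \in H$.

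First I would clear the top row. For each $j = 2,\dots,n$, right-multiplication by $e_{1j}(-u^{-1}v_{j-1})$ (which lies in the listed generators since $j \neq 1$) adds a multiple of column $1 = u\,e_1$ to column $j$, killing the entry $v_{j-1}$ in position $(1,j)$ while not disturbing the $0$'s below it. These column operations commute, so after one step $g$ has been transformed (on the right) into $\mathrm{diag}(u,A)$. Next I would right-multiply by $t(u^{-1}) = \mathrm{diag}(u^{-1},u,1,\dots,1)$, which is in the generating set. A direct block computation gives $\mathrm{diag}(u,A)\cdot t(u^{-1}) = \mathrm{diag}(1,B)$ where $B = A\cdot\mathrm{diag}(u,1,\dots,1)$; taking determinants shows $\det B = u^{-1}\cdot u = 1$, so $B \in \SL_{n-1}(R)$.

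Finally, by the hypothesis that $\SL_{n-1}(R)$ is generated by elementary matrices, I can write $B$ as a product of $e_{ij}(\lambda)$ with $i,j \in \{2,\dots,n\}$. Embedded as the bottom-right block of $\SL_n(R)$, each such generator has second index $j \geq 2$ and hence lies in the listed generating set. Combining the three steps, $g$ is expressed as a product of listed generators (and their inverses, which are also of the same form). There is no real obstacle here; the only thing to verify at each step is that every operation used is either an $e_{ij}(\lambda)$ with $j \neq 1$ or a $t(r)$, which is immediate from the construction.
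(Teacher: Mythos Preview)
Your proof is correct and follows essentially the same approach as the paper: both identify the block form of a stabilizing matrix, strip off the top row using $e_{1j}(\lambda)$'s, correct the $(1,1)$-entry with a $t(r)$, and then invoke the hypothesis $\SL_{n-1}(R)=E_{n-1}(R)$ on the remaining block. The only cosmetic difference is that you phrase it as a reduction via right multiplication rather than writing down the factorization directly, and you perform the three steps in a slightly different order.
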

	\begin{proof}
		Fix an arbitrary matrix in the the stabilizer of $Re_1$. This matrix has the form
		$\left[\begin{array}{c|c} u & \vec{v} \\ \hline 0 & A \end{array}\right]$, for $u\in R^\times$, $\vec{v}\in R^{n-1}$, and $A\in \GL_{n-1}(R)$. We have $u=\det(A)^{-1}$. We will find it as a product of generators stabilizing $Re_1$. 
		
		Set $\vec{w} = u^{-1}\vec{v}$, and note that the matrix $\left[\begin{array}{c|c} 1 & \vec{w} \\ \hline 0 & \id \end{array}\right]$ is clearly a product of elementary matrices stabilizing $Re_1$, and that $\left[\begin{array}{c|c} u & \vec{v} \\ \hline 0 & A \end{array}\right] = \left[\begin{array}{c|c} u & 0 \\ \hline 0 & A \end{array}\right]\left[\begin{array}{c|c} 1 & \vec{w} \\ \hline 0 & \id \end{array}\right]$.
		
		Set $$A' = A \begin{bmatrix} u & & \\ & 1 & \\ & & \ddots \end{bmatrix}.$$ Since $A'\in \SL_{n-1}(R)$ and $\SL_{n-1}(R)$ is generated by elementary matrices, $\left[\begin{array}{c|c} 1 & 0 \\ \hline 0 & A' \end{array}\right]$ is a product of elementary matrices $e_{i,j}(\lambda)$ with $i,j\geq 2$, each of which stabilize $Re_1$. Since $$ \left[\begin{array}{c|c} u & \vec{v} \\ \hline 0 & A \end{array}\right] = \left[\begin{array}{c|c} 1 & 0 \\ \hline 0 & A' \end{array}\right]t(u) \left[\begin{array}{c|c} 1 & \vec{v} \\ \hline 0 & \id \end{array}\right]$$
		and $t(u)$ stabilizes $Re_1$, we have proved the statement.
	\end{proof}

	The hypotheses of Theorem \ref{1conn} allow us to apply Theorem \ref{0conn}, giving us the equivariant map $\phi\colon\Gamma(\SL_n(R), \cS)\to\cT_{n,m}(R)$ constructed by that Theorem. In the next lemma, we will show that any loop in $\cT_{n,m}(R)$ is the image of a loop in $\Gamma(\SL_n(R), \cS)$.

	\begin{lem}\label{lem:preprocessLoops}
		Let $R$ be such that $\SL_k(R)$ is generated by elementary matrices for all $k$. Then, every loop based at $[Re_1]$ in $\cT_{n,m}(R)$ is homotopic to the image under $\phi$ of a loop based at $\id$ in $\Gamma(\SL_n(R), \cS)$.
	\end{lem}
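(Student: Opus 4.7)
The plan is to reduce the loop to a tractable combinatorial form in a small skeleton and then build its Cayley-graph lift one piece at a time. By Lemma~\ref{lem:loop1skel}, after a homotopy rel basepoint I may assume $\gamma$ lies in the 1-skeleton of $\cT_{n,2}(R)$. This 1-skeleton is bipartite: two distinct free-and-cofree summands of $R^n$ of the same rank cannot satisfy the containment relation required for an edge. Hence $\gamma$ automatically alternates
$$[Re_1]=[L_0]-[V_0]-[L_1]-[V_1]-\cdots-[V_{k-1}]-[L_k]=[Re_1],$$
with each $L_i$ a rank-one summand and each $V_i$ a rank-two summand containing both $L_i$ and $L_{i+1}$ as cofree submodules.

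Next I would lift each segment $[L_i]-[V_i]-[L_{i+1}]$ to a path in $\Gamma(\SL_n(R),\cS)$. Inductively, suppose $\gamma_i\in\SL_n(R)$ with $\gamma_i\cdot Re_1 = L_i$ has been chosen. Translating by $\gamma_i^{-1}$, it suffices to produce, for a rank-two free-and-cofree summand $V$ containing $Re_1$ and a rank-one cofree summand $L\subseteq V$, a word $w$ in $\cS$ with $w\cdot Re_1 = L$ whose associated Cayley path has $\phi$-image homotopic rel endpoints to $[Re_1]-[V]-[L]$. Apply Corollary~\ref{flagTrans} to the flag $Re_1\subsetneq V$ of type $(1,1,n-2)$ to get $g\in\SL_n(R)$ with $g\cdot Re_1 = Re_1$ and $g\cdot(Re_1\oplus Re_2) = V$; by Lemma~\ref{lem:stab}, $g$ can be written as a word in the subset of $\cS$ fixing $Re_1$. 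Then $g^{-1}L$ is a rank-one cofree summand of $Re_1\oplus Re_2$, so by transitivity (Corollary~\ref{flagTrans}) together with the hypothesis that $\SL_2(R)$ is generated by elementary matrices, there is $h$ in the top-left $\SL_2$ block of $\SL_n(R)$ with $h\cdot Re_1 = g^{-1}L$, written as a word in $\{e_{12}(\lambda),e_{21}(\lambda),t(r)\}$. Set $w = gh$.

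The main obstacle is verifying that the $\phi$-image of the chosen word has the correct homotopy type. Edges of the Cayley path coming from the $g$-word are labelled by generators fixing $Re_1$, so they map under $\phi$ to constant paths at $[Re_1]$. Each partial product of the $h$-word lies in the $\SL_2$ top block, and $g$ together with any such partial product sends $Re_1\oplus Re_2$ to $V$; consequently every intermediate rank-two vertex appearing in the $\phi$-image equals $[V]$, and every intermediate rank-one vertex is a rank-one summand of $V$. The entire $\phi$-image therefore lies in $\str[V]\subseteq\cT_{n,2}(R)$. Since $\lk[V]$ inside $\cT_{n,2}(R)$ is the discrete set of rank-one cofree summands of $V$, the star $\str[V]$ is a tree, so any two edge-paths with common endpoints are homotopic rel endpoints; in particular the $\phi$-image of our Cayley segment is homotopic to $[Re_1]-[V]-[L]$. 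Concatenating these lifts gives a Cayley-graph path from $\id$ to some $\gamma_k$ with $\gamma_k\cdot Re_1 = Re_1$; by Lemma~\ref{lem:stab} we close it into a loop at $\id$ by appending a word in the subset of $\cS$ fixing $Re_1$, which contributes edges whose $\phi$-images are constant at $[Re_1]$ and so do not affect the homotopy class.
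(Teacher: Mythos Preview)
Your argument is correct and follows the same overall strategy as the paper: reduce to $\cT_{n,2}(R)$ via Lemma~\ref{lem:loop1skel}, break the loop into rank-one/rank-two/rank-one segments, lift each segment to the Cayley graph, and close up using Lemma~\ref{lem:stab}.

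The organization differs slightly. The paper first homotopes $\gamma$ inside $\cT_{n,2}(R)$ so that every segment becomes ``liftable'' (i.e.\ is literally the $\phi$-image of a single Cayley edge), by replacing each segment $[L_1]-[P]-[L_2]$ with a zig-zag $g\cdot\rho$ through $[P]$ built from a factorization of an element of $\SL_2(R)$; only afterwards does it lift. You instead build the Cayley lift directly, invoking Lemma~\ref{lem:stab} up front to write the flag-adjusting element $g$ as a word in stabilizer generators, and you justify the segment-level homotopy by the clean observation that $\str[V]$ in $\cT_{n,2}(R)$ is a tree (cone on the discrete set $\lk[V]$), so any two paths with the same endpoints are homotopic rel endpoints. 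This tree argument is equivalent to the paper's remark that the intermediate edges of $g\cdot\rho$ are ``immediately doubled back upon,'' but it is more conceptual and makes the inductive bookkeeping (translating by $\gamma_i^{-1}$ at each step) more transparent. Both approaches ultimately use the same ingredients: transitivity on flags of type $(1,1,n-2)$, elementary generation of $\SL_2(R)$, and Lemma~\ref{lem:stab}.
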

	\begin{proof}
		By Lemma \ref{lem:loop1skel}, it suffices to consider oriented loops $\gamma$ in $\cT_{n,2}(R)$. Such a loop encounters vertices corresponding to 1-dimensional and 2-dimensional summands, in an alternating manner. It is thus composed of segments of the following form:
		$$\xymatrix{ [L_1] \ar@{-}[r] &[P] \ar@{-}[r] & [L_2] }$$
		where the $L_i$ are 1-dimensional, and $P$ is 2-dimensional. By the construction in Lemma \ref{0conn}, we may lift this to a segment in $\Gamma(\SL_n(R), \cS)$ if there is some $g\in \SL_n(R)$ such that, for some choice of $i$ and $\lambda$, we have 
		\begin{align*}
			L_1 &= g\cdot Re_1 \\
			L_2 &= g\cdot R(e_1+\lambda e_i)\\
			P &= g\cdot (Re_1\oplus Re_i).
		\end{align*}  We call such segments ``liftable".
		
		We will find a homotopy of $\gamma$ to a loop where each segment of the above form is liftable. Fix such a segment in $\gamma$. By Corollary \ref{flagTrans} applied to flags of type $(1,1,n-2)$, we may choose $g\in \SL_n(R)$ such that $g\cdot Re_1 = L_1$ and $g\cdot (Re_1 \oplus Re_2) = P$. Write $g^{-1}L_2 = L_2'$, which is a free summand of $Re_1\oplus Re_2$. Since $\SL_2(R)$ acts transitively on $\cT_2(R)$, we may choose $g'\in \SL_2(R)$ such that $g'\cdot Re_1 = L_2'$. Express $g'$ as a product of elementary matrices $s_1\dots s_k$ in $\SL_2(R)$ (which are also generators for $\SL_n(R)$). Consider the path $\rho$:
		\begin{align*}
			[Re_1] - [Re_1\oplus Re_2] -  [s_1\cdot Re_1] - [Re_1\oplus Re_2&] -  [s_1s_2\cdot Re_1] -  \dots\\
			&\dots - [Re_1\oplus Re_2] -  [g'\cdot Re_1] = L_2'.
		\end{align*}
		
		Each segment of this path is liftable, by construction (notice that $s_i\cdot (Re_1\oplus Re_2) = Re_1\oplus Re_2$). The path $g\cdot \rho$ is also homotopic to the original segment, since each edge traversed (aside from the first and last) is immediately doubled back upon. Therefore we may homotope $\gamma$ to replace the original segment with this new path. This decreases by 1 the number of nonliftable segments in $\gamma$. Repeating this process, we can homotope $\gamma$ to a path entirely composed of liftable segments.
		
		Now, we may lift $\gamma$, segment by segment, to a path $p$ in $\Gamma(\SL_n(R),\cS)$ beginning at $\id$. The end of the path is not necessarily $\id$, but since the image under $\phi$ of this endpoint is $[Re_1]$, it must be in the stabilizer of $[Re_1]$. 
		
		By Lemma \ref{lem:stab}, the stabilizer of $[Re_1]$ is generated by a subset of $\cS$, and so defines a connected subgraph of $\Gamma(\SL_n(R),\cS)$. Any path contained in this subgraph will map to the constant path in $\cT_{n,m}(R)$. Choosing some such path $p'$, we define the loop $\tilde{\gamma}$ to be the composition $p'\ast p$. We have $\phi(\tilde{\gamma})\simeq\gamma$ (where the homotopy is a reparametrization of the domain).
	\end{proof}

	We are now in a position to prove Theorem \ref{1conn}.
	
	\begin{proof}[Proof of Theorem \ref{1conn}]
		By Lemma \ref{lem:preprocessLoops}, the map $\phi\colon \Gamma(\SL_n(R),\cS)\to \cT_{n,m}(R)$ is surjective on $\pi_1$. So, it suffices to show that the image of a based loop in $\Gamma(\SL_n(R),\cS)$ bounds a disk in $\cT_{n,3}(R)$.
		Every such loop corresponds to some word in the generators representing the identity. Such a word is a product of conjugates of the defining relators of a given presentation of $\SL_n(R)$. 
		
		Since the map $\phi$ is $\SL_n(R)$-equivariant, if we show that the image of a loop in $\Gamma(\SL_n(R),\cS)$ corresponding to a defining relator bounds a disk, we will prove the same for the image of any loop. This shows that the map on $\pi_1$ induced by $\phi$ is equal to zero, whence the Theorem.
		
		Combining Theorem \ref{thm:SLnPres} and the remark immediately preceding Theorem \ref{0conn}, there are 3 kinds of defining relators in our presentation of $\SL_n(R)$. These are the Steinberg relations appearing in $S_n(R)$, the relations corresponding to generators of $K_2(3,R)$, and the relations $t(r)= e_{12}(r-1) e_{21}(1)e_{12}(r^{-1}-1)e_{21}(-r)$. We list some cases where defining relators will correspond to trivial loops in $\pi_1(\cT_{n,3}):$
		\begin{itemize}
			\item If a relation is a product of elementary matrices $e_{i_aj_a}(\lambda)$ where $j_a$ is never $1$. Then, each of these generators stabilize $Re_1$, so the associated loop in $\Gamma(\SL_n(R),\cS)$ maps to the constant loop in $\cT_{n,m}(R)$.
			\item If a relation is a product of elementary matrices whose indices are drawn from 3 distinct values $\{1,i,j\}$, then each generator is an automorphism of $Re_1\oplus Re_i\oplus Re_j$. Since the flags $Re_1\subsetneq Re_1\oplus Re_i$ and $R(e_1+\lambda e_i)\subsetneq Re_1\oplus Re_i$ can both be extended by $Re_1\oplus Re_i\oplus Re_j$, it follows that the flag corresponding to any edge in the loop can be extended by $Re_1\oplus Re_i\oplus Re_j$. This shows that this loop is contained in the star of $[Re_1\oplus Re_i\oplus Re_j]$, and so bounds a disk.
		\end{itemize}
				
		These observations handle nearly all of the defining relations. The relations corresponding to generators of $K_2(3,R)$ are products of elementary matrices $e_{ij}(\lambda)$ with $i,j\in \{1,2,3\}$. A Steinberg relation will satisfy one of the above conditions, unless it is of the form $[e_{ij}(\lambda),e_{k\ell}(\mu)]$ with $i\ne \ell,\ j\ne k$, and where exactly one of $j,\ell$ is equal to 1. The loop in $\Gamma(\SL_n(R),\cS)$ corresponding to such a relation consists of four segments. The image of two of these segments will be constant, and of the remaining two, one will be the reverse of the other. Therefore, the image of the loop corresponding to any Steinberg relation is trivial.
		
		The relation $t(r^{-1}) e_{12}(r-1) e_{21}(1)e_{12}(r^{-1}-1)e_{21}(-r)$ is not a product of elementary matrices, so the above observations do not apply. Notice that every generator is an automorphism of $Re_1\oplus Re_2$. Therefore, the image of the corresponding loop lies in the star of $[Re_1\oplus Re_2]$, and therefore bounds a disk.
		
		We have now checked the theorem for all defining relators, as required.		
	\end{proof}
	
	\subsection{Homological connectivity of $\cT_{n,m}(R)$}\;
	
	The goal of this section is a proof of the following theorem:
	
	\begin{thm}\label{HomologyConn}
		Let $n>m> 0$, and $R$ be a ring such that $\cB_k(R)$ is Cohen-Macaulay for all $k$. Assume that $\cT_{k,\ell}(R)$ is connected for $\ell\geq 2$ and simply connected for $\ell\geq 3$. Then, $\cT_{n,m}(R)$ is $(m-2)$-connected.
	\end{thm}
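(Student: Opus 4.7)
\emph{Plan.} The plan is to induct on $m$, reduce via the Hurewicz theorem to a homological vanishing statement, and then apply Quillen's Theorem~\ref{Quillen} to the Span map from a truncated partial basis poset. The base cases $m\leq 3$ are immediate: for $m=1$ from Remark~\ref{rem:nonempty}, and for $m\in\{2,3\}$ from the assumed connectedness and simple-connectedness of $\cT_{k,\ell}(R)$. For the inductive step with $m\geq 4$, simple-connectedness (hypothesis) together with Hurewicz reduces the claim to showing $\tilde{H}_i(\cT_{n,m}(R);\bbZ)=0$ for $2\leq i\leq m-2$, equivalently that $\cT_{n,m}(R)$ is $(m-1)$-spherical.

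To establish this I will apply Quillen's Theorem~\ref{Quillen} (taking $n_{\text{Quillen}}=m-1$) to the restricted Span map $\sigma\colon\bbB_{n,m}(R)\to\bbT_{n,m}(R)$, where $\bbB_{n,m}(R)$ denotes the sub-poset of the partial basis poset consisting of partial bases of $R^n$ of size at most $m$. The nerve of $\bbB_{n,m}(R)$ is the $(m-1)$-skeleton of $\sd\cB_n(R)$, and the Cohen--Macaulay hypothesis on $\cB_n(R)$ (together with Theorem~\ref{vdK}) shows it is $(m-1)$-spherical. For a vertex $y\in\bbT_{n,m}(R)$ of rank $k$, so that $h(y)=k-1$, one must verify that $\bbT_{n,m,>y}(R)\cong\bbT_{n-k,m-k}(R)$ is $(m-k-1)$-spherical, which follows from the inductive hypothesis since $m-k<m$; and that the fiber $\sigma/y\cong\bbB_k(R)$ is $(k-1)$-spherical, which follows from the Cohen--Macaulay hypothesis on $\cB_k(R)$. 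These sphericity conditions match exactly the hypotheses of Theorem~\ref{Quillen} on the fibers and upper intervals of $\sigma$. The spectral sequence underlying Quillen's theorem then compares the homology of source and target, and the vanishing $\tilde{H}_i(\bbB_{n,m}(R))=0$ for $i<m-1$ forces $\tilde{H}_i(\bbT_{n,m}(R))=0$ in the same range, completing the induction.

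The main obstacle is the apparent circularity in directly invoking Theorem~\ref{Quillen}: as stated, it takes the $(m-1)$-sphericity of $\bbT_{n,m}(R)$ itself as an input hypothesis, yet this is essentially what we wish to conclude. I expect this to be resolved either by working with the full Leray-type spectral sequence for the Span map---whose input is sphericity of the source and the fibers, with the sphericity of the target as an output---or by a joint induction that, combined with the Cohen--Macaulay criterion of Proposition~\ref{prop:QuillenCM}, establishes sphericity of $\bbT_{n,m}(R)$ simultaneously with the corresponding facts for all of its intervals $\bbT_{n,m,<y}$, $\bbT_{n,m,>y}$, and $\bbT_{n,m,>y}\cap\bbT_{n,m,<y'}$. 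The substance of the argument lies in leveraging the Cohen--Macaulay hypothesis on $\cB_k(R)$ to control the fibers of $\sigma$, which is what allows the vanishing of low-degree homology in the source to transfer to the target.
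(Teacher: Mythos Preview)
You have correctly identified the central obstruction in your own approach: Quillen's Theorem~\ref{Quillen} as stated takes the sphericity of the \emph{target} $Y$ as an input, so applying it with $Y=\bbT_{n,m}(R)$ to deduce sphericity of $\bbT_{n,m}(R)$ is circular. Your proposed fixes do not close this gap. The spectral sequence underlying Theorem~\ref{Quillen} computes $\tilde H_*(X)$ from data over $Y$, not the reverse; there is no off-the-shelf ``converse Quillen'' taking sphericity of source and fibers to sphericity of the target, and you have not supplied one. Likewise, a joint induction over the intervals $\bbT_{n,m,<y}$, $\bbT_{n,m,>y}$, $\bbT_{n,m,>y}\cap\bbT_{n,m,<y'}$ does not by itself assemble into sphericity of $\bbT_{n,m}(R)$: Proposition~\ref{prop:QuillenCM} requires global sphericity as a separate hypothesis, not a consequence of sphericity of intervals. (Incidentally, $N(\bbB_{n,m}(R))$ is $\sd(\cB_n(R)^{[m-1]})$, not the $(m-1)$-skeleton of $\sd\cB_n(R)$; your sphericity claim for it is nonetheless correct.)

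The paper breaks the circularity by a different mechanism. Rather than attack $\cT_{n,m}(R)$ directly via Quillen, it first computes the cofiber of the inclusion $i_\cT\colon\cT_{n,m}(R)\hookrightarrow\cT_{n,m+1}(R)$ explicitly as $\bigvee_{\Gr_{m+1}^n(R)}\Sigma\,\cT_{m+1}(R)$ (Proposition~\ref{FiltQuot}). The long exact sequence, combined with the inductive hypothesis applied to $\cT_{n,m}(R)$ and $\cT_{m+1}(R)$, then gives $\tilde H_q(\cT_{n,m+1}(R))=0$ for $q\le m-2$ and reduces the remaining degree to showing that
\[
(i_\cT)_*\colon \tilde H_{m-1}(\cT_{n,m}(R))\longrightarrow \tilde H_{m-1}(\cT_{n,m+1}(R))
\]
is zero. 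This is established by the commutative square comparing with the partial basis complex: the top map $(i_\cB)_*$ vanishes because $\cB_n(R)$ is $(n-2)$-connected, and the left vertical $\Span_*$ is surjective (Lemma~\ref{AptsSurj}). It is only for this surjectivity that Theorem~\ref{Quillen} is invoked, and there it is applied with target $\cT_{n,m}(R)$, whose sphericity is \emph{already known} by the inductive hypothesis (the induction runs over pairs $(n,m)$ in dictionary order). So Quillen is used in the correct direction, on a strictly smaller instance. The substantive idea you are missing is the cofiber computation of Proposition~\ref{FiltQuot}; this is what converts the problem into one where Quillen applies without circularity.
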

	\begin{rem}
		If one drops the assumptions on connectivity and simple-connectivity, the arguments given in this section will prove the weaker statement that $\cT_{n,m}(R)$ is $(m-2)$-acyclic.
	\end{rem}
	
	We will prove Theorem \ref{HomologyConn} by induction on $n$ and $m$. The structure of this induction is as follows. Consider the set of pairs of natural numbers $(n,m)$ satisfying $n>m>0$, under the dictionary ordering. This is a linear order, isomorphic to $\bbN$. Remark \ref{rem:nonempty} implies the theorem is true when $m=1$. By assumption, the theorem is true when $m=2,3$. These are our base cases. The statement we must prove is the following:
	\begin{IndProp}
		Fix $n>m\geq 3$. Assume $\cT_{n',m'}(R)$ is $(m'-2)$-connected for any $(n',m')\leq (n,m)$ in the dictionary ordering. Then, $\cT_{n,m+1}(R)$ is $(m-1)$-connected.
	\end{IndProp}
	The remainder of this section is devoted to a proof of the Inductive Proposition.
	First, we want to better understand the ``filtration quotients" of $\cT_{n,\bullet}(R)$. Recall that for a ring $R$, the Grassmannian $\Gr_k^n(R)$ is the set of rank $k$ free and cofree summands of $R^n$.
	\begin{prop}\label{FiltQuot}
		The cofiber of the inclusion $i_\cT\colon\cT_{n,m}(R)\hookrightarrow \cT_{n,m+1}(R)$ is homotopy equivalent to $$\bigvee_{\mathclap{\Gr_{m+1}^n(R)}}\; \Sigma\; \cT_{m+1}(R).$$
	\end{prop}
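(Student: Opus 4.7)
The plan is to exhibit $\cT_{n,m+1}(R)$ as being built from $\cT_{n,m}(R)$ by attaching, for each $V\in\Gr_{m+1}^n(R)$, a cone on a copy of $\cT_{m+1}(R)$ along its base. Collapsing $\cT_{n,m}(R)$ will then collapse the base of each such cone, turning it into a suspension, and the wedge structure arises because distinct cones meet only inside $\cT_{n,m}(R)$.

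First I would show that every simplex of $\cT_{n,m+1}(R)$ not lying in $\cT_{n,m}(R)$ contains a unique vertex $[V]$ with $\rk V=m+1$. This is immediate: the ranks of the summands in a flag are strictly increasing, and by definition $\cT_{n,m+1}(R)$ caps the ranks at $m+1$. So the set of ``new'' simplices is partitioned by the unique maximal-rank vertex each contains.

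Next I would identify the link of such a vertex. Fix $V\in\Gr_{m+1}^n(R)$. By the poset description of links from \S3.1, $\lk [V]$ is the nerve of the subposet $\bbT_n(R)_{<V}$ of free and cofree summands $W<V$, i.e.\ with $V/W$ free. Since $V$ is itself a free and cofree summand of $R^n$ of rank $m+1$, any such $W$ is automatically a free and cofree summand of $R^n$ of rank at most $m$. Choosing an isomorphism $V\cong R^{m+1}$ identifies $\bbT_n(R)_{<V}$ with $\bbT_{m+1}(R)$, so $\lk [V]\cong \cT_{m+1}(R)$, and moreover $\lk [V]\subseteq \cT_{n,m}(R)$. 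Hence $\str [V]=[V]*\lk [V]$ is the cone $C\cT_{m+1}(R)$. For distinct $V,V'\in\Gr_{m+1}^n(R)$, a simplex in $\str[V]\cap\str[V']$ would contain two rank-$(m+1)$ vertices, which is impossible by Step 1; therefore $\str[V]\cap\str[V']\subseteq \cT_{n,m}(R)$.

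Combining these observations, $\cT_{n,m+1}(R)=\cT_{n,m}(R)\cup\bigcup_V \str[V]$, where each star meets $\cT_{n,m}(R)$ exactly in its base $\lk[V]$, and distinct stars meet only inside $\cT_{n,m}(R)$. The inclusion $i_\cT$ is a cofibration, so its cofiber is the quotient. Collapsing $\cT_{n,m}(R)$ sends each $\lk[V]$ to the basepoint, and for each $V$ we have
\[
\str[V]/\lk[V] \;\cong\; C\cT_{m+1}(R)/\cT_{m+1}(R) \;\simeq\; \Sigma\,\cT_{m+1}(R);
\]
the stars then share only the basepoint, producing the claimed wedge $\bigvee_{\Gr_{m+1}^n(R)}\Sigma\,\cT_{m+1}(R)$. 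The only real subtlety is the identification in the second step, which rests on the fact that $V$ splits off $R^n$ as a direct summand, so that free-and-cofree summands of $V$ inside $R^n$ correspond canonically to free-and-cofree summands of the rank $m+1$ free module $V$ on its own; once this is in hand, the rest is bookkeeping.
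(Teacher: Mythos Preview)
Your argument is correct and follows essentially the same route as the paper's proof: both identify $\lk[V]\cong\cT_{m+1}(R)\subseteq\cT_{n,m}(R)$ for each $V\in\Gr_{m+1}^n(R)$, observe that the stars of distinct rank-$(m+1)$ vertices meet only inside $\cT_{n,m}(R)$, and conclude that the quotient is a wedge of suspensions $\str[V]/\lk[V]\cong\Sigma\cT_{m+1}(R)$. Your phrasing of the ``distinct stars'' step is slightly loose---a simplex in $\str[V]\cap\str[V']$ need not itself contain any rank-$(m+1)$ vertex; the correct statement is that if such a simplex were \emph{not} in $\cT_{n,m}(R)$ then its unique rank-$(m+1)$ vertex would have to equal both $V$ and $V'$---but the intended argument and conclusion are right.
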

	\begin{proof}
		Since $\cT_{n,m}(R)$ is a closed subcomplex of $\cT_{n,m+1}(R)$, the cofiber of the inclusion is homotopy equivalent to the quotient space $\cT_{n,m+1}(R)/\cT_{n,m}(R)$. We will show this is homeomorphic to the indicated space.
		
		Every vertex $[V]$ of $\cT_{n,m+1}(R)\setminus \cT_{n,m}(R)$ corresponds to a rank $m+1$ free summand $V$ of $R^n$. Every vertex of $\cT_{n,m+1}(R)$ connected to $[V]$ must correspond to a free and cofree summand of $V$. Therefore, we see that $\lk [V] \subseteq \cT_{n,m}(R)$, and $\lk[V]\cong \cT_{m+1}(R$). Therefore, $\str [V] / \lk [V]$ is homeomorphic to $\Sigma \lk [V]$. These assemble into a map $$f:\bigvee_{\mathclap{V\in \Gr_{m+1}^n(R)}}\; \Sigma \cT_{m+1}(R) \to \cT_{n,m+1}(R)/\cT_{n,m}(R).$$
		
		There is a continuous partial inverse map defined on the image of the interior of $\str [V]$ in $\cT_{n,m+1}(R)/\cT_{n,m}(R)$. Since these sets do not overlap, these partial maps may be glued together into a continuous inverse to $f$, sending the point corresponding to $\cT_{n,m}(R)$ to the wedge point of $\displaystyle \bigvee_{\mathclap{\Gr_{m+1}^n(R)}}\; \Sigma \cT_{m+1}(R)$.
	\end{proof}
	The following result is deduced from the cofiber sequence in reduced homology:
	\begin{cor}
		If $\cT_{n,m}(R)$ and $\cT_{m+1}(R)$ are $(m-2)$-connected, then \[\tilde{\HH}_q(\cT_{n,m+1}(R)) =0\ \text{ for } q\leq m-2,\] and there is an exact sequence: 
		\begin{align*}
			0 \to \tilde{\HH}_m(\cT_{n,m+1}(R))\to \bigoplus\tilde{\HH}_{m-1}&(\cT_{m+1}(R))\to\\ 
			&\to \tilde{\HH}_{m-1}(\cT_{n,m}(R))\to \tilde{\HH}_{m-1}(\cT_{n,m+1}(R)) \to 0
		\end{align*}
	\end{cor}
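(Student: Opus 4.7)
The plan is to apply the long exact sequence of reduced homology associated to the cofiber sequence
\[ \cT_{n,m}(R) \xhookrightarrow{i_\cT} \cT_{n,m+1}(R) \to \cT_{n,m+1}(R)/\cT_{n,m}(R), \]
and then substitute in the identification of the cofiber provided by Proposition \ref{FiltQuot}.

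First, I would use Proposition \ref{FiltQuot} together with the suspension isomorphism and the fact that reduced homology takes wedges to direct sums to compute
\[ \tilde{\HH}_q\bigl(\cT_{n,m+1}(R)/\cT_{n,m}(R)\bigr) \;\cong\; \bigoplus_{\Gr^n_{m+1}(R)} \tilde{\HH}_{q-1}(\cT_{m+1}(R)). \]
Under the $(m-2)$-connectivity hypothesis on $\cT_{m+1}(R)$, this group vanishes for $q \leq m-1$.

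Next, I would invoke the long exact sequence of the pair and plug in the vanishing ranges. In the range $q \leq m-2$, both $\tilde{\HH}_q(\cT_{n,m}(R))$ and $\tilde{\HH}_q(\cT_{n,m+1}(R)/\cT_{n,m}(R))$ vanish (the former by hypothesis, the latter by the computation above), so exactness forces $\tilde{\HH}_q(\cT_{n,m+1}(R)) = 0$. For the asserted four-term exact sequence, I would extract the relevant portion of the long exact sequence
\[ \tilde{\HH}_m(\cT_{n,m}(R)) \to \tilde{\HH}_m(\cT_{n,m+1}(R)) \to \tilde{\HH}_m\bigl(\cT_{n,m+1}(R)/\cT_{n,m}(R)\bigr) \to \tilde{\HH}_{m-1}(\cT_{n,m}(R)) \to \tilde{\HH}_{m-1}(\cT_{n,m+1}(R)) \to \tilde{\HH}_{m-1}\bigl(\cT_{n,m+1}(R)/\cT_{n,m}(R)\bigr) \]
and then collapse the outer two terms to zero: the leftmost term vanishes because $\cT_{n,m}(R)$ is a complex of dimension at most $m-1$, and the rightmost term vanishes by the connectivity computation at $q = m-1$.

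There is no real obstacle here; the only thing to double-check is the vanishing of $\tilde{\HH}_m(\cT_{n,m}(R))$, which holds for dimensional reasons since $\cT_{n,m}(R)$ is the full subcomplex spanned by summands of rank at most $m$, hence has simplicial dimension $m-1$. Once this and the two connectivity inputs are recorded, the statement is immediate from the long exact sequence of the pair.
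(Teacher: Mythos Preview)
Your proposal is correct and follows exactly the approach the paper indicates: the paper simply states that the corollary ``is deduced from the cofiber sequence in reduced homology,'' and you have supplied precisely those details, including the dimensional vanishing of $\tilde{\HH}_m(\cT_{n,m}(R))$ and the connectivity-based vanishing of the cofiber homology in degrees $\leq m-1$.
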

	
	This gets us much of the way to our goal. To prove Theorem \ref{HomologyConn}, it suffices to show that the map $$(i_\cT)_*\colon\tilde{\HH}_{m-1}(\cT_{n,m}(R))\to \tilde{\HH}_{m-1}(\cT_{n,m+1}(R))$$ is zero. To do this, we compare with $\cB_n(R)$. Denote by $\cB_n(R)^{[m]}$ the $m$-skeleton of the partial basis complex -- this consists of partial bases of size $m+1$, and so $\Span(\sd \cB_n(R)^{[m]}) = \cT_{n,m+1}(R)$. We have the following commutative diagram of spaces:
	$$\xymatrix{\cB_n(R)^{[m-1]} \ar@{^{(}->}[r]^{i_\cB} \ar[d]^{\Span} & \cB_n(R)^{[m]} \ar[d]^{\Span} \\ \cT_{n,m}(R) \ar@{^{(}->}[r]^{i_\cT} & \cT_{n,m+1}(R)}$$
	where $i_\cB$ is the inclusion of the $(m-1)$-skeleton into the $m$-skeleton. Taking homology, we have 
	$$\xymatrix{\tilde{\HH}_{m-1}(\cB_n(R)^{[m-1]}) \ar[r]^{0} \ar[d]^{\Span_*} & \tilde{\HH}_{m-1}(\cB_n(R)^{[m]}) \ar[d]^{\Span_*} \\ \tilde{\HH}_{m-1}(\cT_{n,m}(R)) \ar[r]^{(i_\cT)_*} & \tilde{\HH}_{m-1}(\cT_{n,m+1}(R)) }$$
	where we know $(i_\cB)_* = 0$ by van der Kallen's calculation of the connectivity of $\cB_n(R)$ (see Theorem \ref{vdK}). It will suffice to prove that the leftmost span map is surjective on $\tilde{\HH}_{m-1}$. A diagram chase then shows that $(i_\cT)_* = 0$. This fact will also be of interest later:
	\begin{lem}\label{AptsSurj}
		Let $R$ be such that $\cB_n(R)$ is Cohen-Macaulay, and assume that $\cT_{n',m'}(R)$ is $(m'-1)$-spherical for any $(n',m')\leq (n,m)$. Then, $$\Span_*\colon\tilde{\HH}_{m-1}(\cB_n(R)^{[m-1]})\to \tilde{\HH}_{m-1}(\cT_{n,m}(R))$$ is surjective.
	\end{lem}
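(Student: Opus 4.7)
The plan is to apply Quillen's fiber theorem (Theorem \ref{Quillen}) to the poset map
\[ f = \Span\colon \bbB_n(R)^{[m-1]} \longrightarrow \bbT_{n,m}(R), \]
whose nerve realization is the continuous map $\Span\colon \sd\cB_n(R)^{[m-1]}\to \cT_{n,m}(R)$. Write $Y = \bbT_{n,m}(R)$. Since the order on $Y$ is cofreeness and rank strictly increases along chains, one has $\dim Y = m-1$ and $h(V) = \rk V - 1$ for every $V \in Y$.

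The first hypothesis of Quillen's theorem is that $Y$ be $(m-1)$-spherical, which is exactly the assumption specialized to $(n',m') = (n,m)$. The second hypothesis has two parts, to be verified at each $V\in Y$ of rank $r$ (so $h(V) = r-1$). For the upper set: choosing a free complement of $V$ in $R^n$ identifies $R^n/V \cong R^{n-r}$, and the assignment $W \mapsto W/V$ gives a poset isomorphism $Y_{>V} \cong \bbT_{n-r,\,m-r}(R)$, since a submodule strictly containing $V$ is a free and cofree summand of $R^n$ with $V$ cofree in it if and only if its image in $R^n/V$ is a free and cofree summand. As $(n-r,m-r) < (n,m)$ in the dictionary ordering, the assumption supplies the required $(m-r-1) = (m-h(V)-2)$-sphericity. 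For the fiber: a partial basis $B$ of $R^n$ satisfies $\Span(B)\leq V$ in $Y$ exactly when $B$ extends to a basis of $V$, i.e., when $B$ is itself a partial basis of $V\cong R^r$; thus $f/V \cong \bbB_r(R)$, whose nerve is $\sd\cB_r(R)$. Because $\cB_r(R)$ appears as $\lk_{\cB_n(R)}(\tau)$ for any partial basis $\tau$ of size $n-r$ (using a free complement), the Cohen-Macaulayness of $\cB_n(R)$ gives the required $(r-1) = h(V)$-sphericity.

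With all hypotheses verified, Quillen's theorem asserts that $\bbB_n(R)^{[m-1]}$ is $(m-1)$-spherical and that $f_* = \Span_*$ factors through an isomorphism
\[ \tilde{\HH}_{m-1}\bigl(\cB_n(R)^{[m-1]}\bigr)\big/F_0 \;\cong\; \tilde{\HH}_{m-1}(\cT_{n,m}(R)), \]
whence $\Span_*$ is surjective, as desired.

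There is no serious obstacle, but the one delicate step is the identification $f/V \cong \bbB_r(R)$: one direction is tautological, while for the converse one must observe that the cofreeness condition $V/\Span(B)$ free is automatic for a partial basis $B$ of $V$, since completing $B$ to a basis of $V\cong R^r$ produces the needed free complement. The remaining ingredients -- the identification of upper sets via quotienting by $V$, and the extraction of sphericity of $\cB_r(R)$ from Cohen-Macaulayness of $\cB_n(R)$ -- are routine, and the inductive hypothesis on $\cT_{n',m'}(R)$ does the rest.
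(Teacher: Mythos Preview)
Your proof follows the same strategy as the paper's: both apply Quillen's Theorem~\ref{Quillen} to the span map, identify the upper set $Y_{>V}$ with $\bbT_{n-r,m-r}(R)$ via quotienting by $V$, and identify the fiber $f/V$ with the poset of partial bases of $V\cong R^r$, i.e.\ with $\sd\cB_r(R)$.

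There is one inaccuracy. You assert that $\cB_r(R)$ appears as $\lk_{\cB_n(R)}(\tau)$ for a partial basis $\tau$ of size $n-r$. This is not true: the link consists of all sets $\sigma$ with $\sigma\cup\tau$ a partial basis of $R^n$, and such $\sigma$ need not lie in any chosen free complement of $\Span(\tau)$ (for instance, with $\tau=\{e_{r+1},\dots,e_n\}$ the vertex $e_1+e_{r+1}$ is in the link but not in the copy of $\cB_r(R)$ sitting inside $\Span\{e_1,\dots,e_r\}$). What is true is that $\cB_r(R)$ is a \emph{retract} of this link via the projection $R^n\to R^r$ onto the complement; since retracts of $(r-2)$-connected spaces are $(r-2)$-connected and $\cB_r(R)$ is $(r-1)$-dimensional, you still get the required $(r-1)$-sphericity. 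The paper sidesteps this by invoking Theorem~\ref{vdK} directly for $\cB_k(R)$, which strictly speaking uses $SR_2$ rather than the stated hypothesis; your route via Cohen--Macaulayness of $\cB_n(R)$ alone is more self-contained once the retract argument is supplied.
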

	\begin{proof}
		For this proof, we use the language of \S\ref{simptools}. Our goal is to invoke Theorem \ref{Quillen}. We check the conditions:
		\begin{itemize}
			\item \emph{For all $y\in \cT_{n,m}(R)$, $\Span/y$ is $h(y)$-spherical}. Indeed, if $y = [V]$ where $V$ is a rank $k$ free and cofree summand of $R^n$, then $h(y) = k-1$, and $\Span/y$ is the complex of partial bases of $V$. This is homeomorphic to $\cB_k(R)$, which is $(k-1)$-spherical by Theorem \ref{vdK}.
			\item \emph{For all $y\in \cT_{n,m}(R)$, $(\cT_{n,m}(R))_{>y}$ is $(m-h(y)-2)$-spherical}. As above, let $y = [V]$ where $V$ is a rank $k$ free and cofree summand of $R^n$, so that $h(y) = k-1$. In this case, $(\cT_{n,m}(R))_{>y}$ is the complex of free and cofree summands of $R^n$, of rank no more than $m$, containing $V$ as a cofree summand. This is isomorphic to the poset of free and cofree summands of $R^n/V \cong R^{n-k}$ of rank no more than $(m-k)$. Therefore, $(\cT_{n,m}(R))_{>y}\cong \cT_{n-k,m-k}(R)$, which is $(m-k-1)$-spherical by the inductive hypothesis.
		\end{itemize}
		By Theorem \ref{Quillen}, $\Span_*$ is surjective.
	\end{proof}

	\begin{proof}[Proof of Theorem \ref{HomologyConn}]
		The above arguments have shown that, if we assume that $\cT_{n',m'}(R)$ is $(m'-2)$-connected for any $(n',m')\leq (n,m)$ in the dictionary ordering, $\cT_{n,m+1}(R)$ is $(m-1)$-acyclic. Since $m\geq 3$, by hypothesis $\cT_{n,m+1}(R)$ is simply connected, so by the Hurewicz theorem, it is in fact $(m-1)$-connected. This proves the Inductive Proposition. Since the base case holds, either by hypothesis or by Remark \ref{rem:nonempty}, the theorem follows.
	\end{proof}
	
	 We are now in a position to prove Theorem \ref{TitsFiltConn}, which states that $\cT_{n,m}(R)$ is $(m-1)$-spherical if $R$ is a ring where:
	\begin{enumerate}
		\item $\cB_n(R)$ is Cohen-Macaulay.
		\item $\SL_k(R)$ is generated by elementary matrices for all $k\geq 1$.
		\item If $k\geq 3$, the natural map $K_2(3,R)\to K_2(k,R)$ is surjective.
	\end{enumerate}
	
	\begin{proof}[Proof of Theorem \ref{TitsFiltConn}]
		If $R$ satisfies the hypotheses of the Theorem, Theorems \ref{0conn} and \ref{1conn} both apply. This means that the hypotheses of Theorem \ref{HomologyConn} are satisfied, proving the Theorem.
	\end{proof}
	
	\subsection{Proof of Theorem \for{toc}{\ref*{Connectivity}}\except{toc}{\ref{Connectivity}}}\;
	
	Recall that Theorem \ref{Connectivity} asserts that $\cT_n(R)$ is Cohen-Macaulay when $R$ satisfies either $SR_2$ or is a Dedekind domain of arithmetic type. Since $\cT_{n,n-1}(R) = \cT_n(R)$, whenever Theorem \ref{TitsFiltConn} applies, we know that $\cT_n(R)$ is $(n-2)$-spherical. By the results of \S2, this includes all rings satisfying $SR_2$, and any Dedekind domain of arithmetic type $\cO_S$ which is either Euclidean, or where $S$ contains at least 2 places, at least one of which is not complex.
	
	To handle the case of a general Dedekind domain of arithmetic type, we want to understand the impact of localization on $\cT_n$. We recall the Picard group:
	\begin{defn}
		If $R$ is a commutative ring, then its \emph{Picard group} $\Pic(R)$ is the set of rank 1 projective modules, made into an abelian group using $\tensor_R$.
	\end{defn}
	The following two results are classical.

	\begin{prop}
		Let $\tilde{K}_0(R)$ be the set of projective $R$-modules modulo the relation $P\sim Q$ if $$P\oplus R^k \cong Q\oplus R^\ell \text{ for some }k,\ell \geq 0,$$ made into an abelian group under $\oplus$. If $R$ is a Dedekind domain, then $\Pic(R)$ is canonically isomorphic to $\tilde{K}_0(R)$.
	\end{prop}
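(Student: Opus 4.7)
The plan is to exhibit mutually inverse homomorphisms between $\Pic(R)$ and $\tilde{K}_0(R)$ using the structure theorem for finitely generated projective modules over a Dedekind domain, together with the determinant functor.

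First, I would recall the two classical ingredients. The \emph{structure theorem} for finitely generated projectives over a Dedekind domain $R$ says that every such module $P$ is isomorphic to $R^{r-1}\oplus I$ for some $r = \rk P$ and some rank $1$ projective $I$, and that the isomorphism class of $I$ is uniquely determined by $P$. The \emph{Steinitz isomorphism} says that for any two fractional ideals $I,J$ of $R$ one has $I\oplus J \cong R \oplus IJ$; equivalently, the tensor product on $\Pic(R)$ is compatible with the direct sum operation used in $\tilde{K}_0(R)$.

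Next, I would define the two maps. Let $\iota\colon \Pic(R)\to \tilde{K}_0(R)$ send the class $[I]$ to the stable class $[I]$. By the Steinitz isomorphism, $[I]+[J] = [I\oplus J] = [R\oplus IJ] = [IJ]$ in $\tilde{K}_0(R)$, so $\iota$ is a group homomorphism. Going the other way, define $\det\colon \tilde{K}_0(R)\to \Pic(R)$ by $\det([P]) = [\,\bigwedge^{\rk P} P\,]$. To see this is well-defined on stable classes, note $\bigwedge^{r+k}(P\oplus R^k) \cong \bigwedge^r P \otimes \bigwedge^k R^k \cong \bigwedge^r P$, so $P\oplus R^k \cong Q\oplus R^\ell$ implies $\det P \cong \det Q$ in $\Pic(R)$. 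Multiplicativity of the determinant under direct sums (together with the same Steinitz identity) shows $\det$ is a homomorphism.

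Finally, I would check the two compositions. The composite $\det\circ\iota$ is the identity on $\Pic(R)$ because a rank $1$ projective is its own determinant. For $\iota\circ\det$, given $[P]\in \tilde{K}_0(R)$, use the structure theorem to write $P\cong R^{r-1}\oplus I$; then $\det P \cong I$, so $\iota(\det[P]) = [I] = [R^{r-1}\oplus I] = [P]$ in $\tilde{K}_0(R)$, where the middle equality uses that $R^{r-1}$ is free.

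The only genuine content is the Steinitz isomorphism $I\oplus J\cong R\oplus IJ$, which is the classical obstacle, but this is a standard fact proved by exhibiting explicit generators (reducing to the case of coprime integral ideals, where the exact sequence $0\to I\cap J \to I\oplus J \to I+J\to 0$ splits with $I\cap J = IJ$ and $I+J = R$). The remaining steps are formal, given the structure theorem.
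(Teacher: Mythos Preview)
Your proof is correct. Note, however, that the paper does not actually give its own proof of this proposition: it simply cites \cite[Corollary II.2.6.3]{KBook} and remarks that $\Spec(R)$ is connected when $R$ is a domain. Your argument is the standard one that underlies that citation, so there is no meaningful difference in approach to discuss.

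Two minor comments. First, you do not need the Steinitz identity to verify that $\det$ is a homomorphism: the formula $\bigwedge^{\,\rk P+\rk Q}(P\oplus Q)\cong\bigwedge^{\rk P}P\otimes\bigwedge^{\rk Q}Q$ is a general fact about exterior powers of direct sums and already gives $\det([P]+[Q])=\det[P]\cdot\det[Q]$. Second, when you invoke the structure theorem you state both existence and uniqueness of $I$, but your argument only uses existence; the uniqueness is in fact what your determinant calculation establishes. Neither of these affects the validity of the proof.
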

	This is \cite[Corollary II.2.6.3]{KBook}, combined with the observation that $\Spec(R)$ is connected when $R$ is a domain. Notice that if $R$ is a PID, then $\Pic(R)$ is trivial.
	\begin{prop}\label{lem:picLocalize}
		Let $R$ be a Dedekind domain, $S$ a multiplicatively closed subset, and $R_S = S^{-1}R$. Identify $\Spec(R_S)$ with its image in $\Spec(R)$, and assume it is cofinite. Let $D(R,R_S)$ be the free abelian group on the prime ideals $\fp$ of $R$ where $\fp\cap S \ne \emptyset$. Then, we have an exact sequence:
		$$D(R,R_S)\to \Pic(R)\to \Pic(R_S)\to 0,$$
		where the map $D(R,R_S)\to \Pic(R)$ sends the generator corresponding to a prime $\fp$ to $[\fp]\in\Pic(R)$.
	\end{prop}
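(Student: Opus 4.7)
The plan is to prove this using the identification of the Picard group of a Dedekind domain with its ideal class group, and then to track what happens to prime ideals under localization. Since $R$ is a Dedekind domain, $\Pic(R)$ is canonically isomorphic to the group of fractional ideals $I(R)$ modulo principal fractional ideals, and $I(R)$ is the free abelian group on the set of nonzero prime ideals of $R$. The same is true for $R_S$, where the primes of $R_S$ are exactly the primes $\fq$ of $R$ with $\fq \cap S = \emptyset$. The map $\Pic(R)\to\Pic(R_S)$ induced by base change is, under this identification, the extension-of-ideals map $I \mapsto IR_S$. On generators, this sends $\fp \mapsto \fp R_S$, which equals $R_S$ (trivial class) if $\fp \cap S \ne \emptyset$ and equals the corresponding prime of $R_S$ otherwise.

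First I would verify that the composition $D(R,R_S)\to\Pic(R)\to\Pic(R_S)$ is zero: if $\fp$ meets $S$, then some $s\in \fp\cap S$ becomes a unit in $R_S$, so $\fp R_S = R_S$ represents the trivial class.

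Next I would prove surjectivity of $\Pic(R)\to\Pic(R_S)$. Every fractional ideal $J$ of $R_S$ is of the form $IR_S$ for some fractional ideal $I$ of $R$ (take $I$ generated by a set of generators of $J$, clearing denominators), so every class in $\Pic(R_S)$ is hit.

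Then I would verify exactness at $\Pic(R)$. Suppose $[I]\in\Pic(R)$ lies in the kernel, so $IR_S = (\alpha)R_S$ for some $\alpha\in K^\times$ (the fraction field). Setting $J = \alpha^{-1} I$, we have $[J] = [I]$ in $\Pic(R)$, and $JR_S = R_S$. Using unique factorization of fractional ideals in the Dedekind domain $R$, write $J = \prod_{\fp} \fp^{n_\fp}$ with almost all $n_\fp = 0$. For any prime $\fq$ of $R$ disjoint from $S$, we have $v_\fq(J) = v_\fq(JR_S) = 0$, so $n_\fq = 0$. Hence $J = \prod_{\fp\cap S\ne\emptyset}\fp^{n_\fp}$, which exhibits $[I]=[J]$ as the image of $\sum_\fp n_\fp\cdot [\fp]\in D(R,R_S)$.

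There is no real obstacle here; the only subtlety is being careful that ``principal in $R_S$'' allows the generator to be in the fraction field, not necessarily in $R$, which is why the argument passes through fractional ideals from the outset rather than working with genuine ideals. The identification $\Pic = \Cl$ via unique factorization into primes, together with the observation that localization at $S$ is the identity on primes disjoint from $S$ and kills primes meeting $S$, makes the sequence essentially formal once set up.
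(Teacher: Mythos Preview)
Your argument is correct and follows essentially the same route as the paper: both identify $\Pic$ with the ideal class group (equivalently Weil divisors modulo principal divisors) and observe that the kernel of $\Pic(R)\to\Pic(R_S)$ consists of classes supported on the primes meeting $S$. The paper simply cites Hartshorne's Proposition II.6.5 for these facts, whereas you spell out the unique-factorization argument directly; the content is the same.
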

	This follows from \cite[Proposition II.6.5]{Hartshorne}, in the following way. Set $X=\Spec(R)$ and $U = \Spec(R_S)$, and interpret the Picard group as the ideal class group of fractional ideals. Surjectivity of the map $\Pic(R)\to \Pic(R_S)$ is part (a) of the cited proposition. As in the proof of part (c), we notice that the kernel of this map consists of Weil divisors whose support lies in $\Spec(R)\setminus\Spec(R_S)$. Define the map $D(R,R_S)\to \Pic(R)$ to send a prime ideal $\fp$ to its image in $\Pic(R)$. This surjects onto the kernel, proving the proposition.
	\begin{defn}
		Let $R$ be a Dedekind domain, and $H$ a subgroup of $\Pic(R)$. Define $\cT_n(R,H)$ to be the nerve of the poset $\bbT_n(R,H)$ of direct summands $P$ of $R^n$, such that $[P]$, thought of as an element of $\tilde{K}_0(R)$, lies in $H$. Call this the \emph{$H$-controlled Tits complex}.
	\end{defn}
	\begin{rem}
		Since $R$ is a Dedekind domain, a projective module is stably free if and only if it is free, and a free summand of a free module is automatically cofree. Therefore, $\cT_n(R) = \cT_n(R,\{1\})$.
	\end{rem}
	\begin{thm}\label{thm:localTits}
		Let $R$ be a Dedekind domain, and let $S$ be a multiplicatively closed set. Write $R_S = S^{-1}R$, and let $H=\ker\left(\Pic(R)\to \Pic(R_S)\right).$
		Then, $$\cT_n(R_S)\cong \cT_n(R,H)$$
	\end{thm}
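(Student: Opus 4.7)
The natural approach is to exhibit a concrete isomorphism of posets $\bbT_n(R,H)\cong \bbT_n(R_S)$ and then pass to nerves. The two maps I would use are extension of scalars $\Phi\colon P\mapsto P\otimes_R R_S$ in one direction, and contraction $\Psi\colon V\mapsto V\cap R^n$ (under the injection $R^n\hookrightarrow R_S^n$) in the other. First, I would show $\Phi$ is well-defined as a map $\bbT_n(R,H)\to \bbT_n(R_S)$. If $P$ is a summand of $R^n$ with $[P]\in H$, then $P\otimes_R R_S$ is a summand of $R_S^n$ whose class in $\Pic(R_S)$ is the image of $[P]$ under $\Pic(R)\to \Pic(R_S)$, hence $0$ by definition of $H$. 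Since $R_S$ is again a Dedekind domain (or a field), every stably free finitely generated module is free, so $P\otimes_R R_S$ is free. The same argument applied to $R^n/P$ (whose class is $-[P]\in H$) shows that the complement $R_S^n/(P\otimes R_S)\cong (R^n/P)\otimes R_S$ is free, so $P\otimes R_S\in \bbT_n(R_S)$.

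Next, I would verify $\Psi$ is well-defined. Given $V\in \bbT_n(R_S)$, set $P=V\cap R^n$. The quotient $R^n/P$ is the kernel of the composite $R^n\hookrightarrow R_S^n\twoheadrightarrow R_S^n/V$ (by construction), so it embeds into the free $R_S$-module $R_S^n/V$. A fortiori it is torsion-free, and, being a finitely generated module over the Dedekind domain $R$, it is projective. Thus $P$ is a direct summand of $R^n$. Its class in $\Pic(R)$ maps to $[P\otimes R_S]=[V]=0$ in $\Pic(R_S)$, so $[P]\in H$, i.e.\ $P\in \bbT_n(R,H)$.

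The mutual-inverse identities are then routine. The equality $\Phi\Psi(V)=(V\cap R^n)\otimes R_S=V$ follows from the standard fact that for any $v\in V$, writing $v=m/s$ with $m\in R^n$ and $s\in S$ gives $sv\in V\cap R^n$. The equality $\Psi\Phi(P)=(P\otimes R_S)\cap R^n=P$ reduces to showing that $R^n/P$ is $S$-torsion-free, which we already established above. Functoriality in the poset structure is then automatic: tensoring preserves direct summands, showing $\Phi$ is order-preserving; and if $V_1\subseteq V_2$ in $\bbT_n(R_S)$ with $V_2/V_1$ free, then $(V_2\cap R^n)/(V_1\cap R^n)$ injects into $R^n/(V_1\cap R^n)$, which is projective by the argument above, so the quotient is torsion-free and hence projective over the Dedekind domain $R$, making $V_1\cap R^n$ a direct summand of $V_2\cap R^n$. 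This gives the poset isomorphism, and taking nerves yields the claimed homeomorphism $\cT_n(R_S)\cong \cT_n(R,H)$.

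The main subtlety I anticipate is bookkeeping the two equivalent characterizations of ``stably free equals free'' on either side of the localization to ensure the summand conditions really are what both posets demand, particularly since the order relation on $\bbT_n(R_S)$ includes cofreeness of the quotient. This is handled uniformly by noting that on a Dedekind domain any stably free module of positive rank is free, so a summand inclusion between free modules automatically has a free cokernel; hence the order relation collapses to ``direct summand'' on both posets, and the localization/contraction bijection intertwines these.
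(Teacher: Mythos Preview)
Your proposal is correct and follows essentially the same approach as the paper: extension of scalars $P\mapsto P\otimes_R R_S$ in one direction and contraction $V\mapsto V\cap R^n$ in the other, checked to be mutually inverse poset maps. In fact you supply a bit more detail than the paper does---you explicitly verify both composites $\Phi\Psi$ and $\Psi\Phi$ are the identity and that both maps preserve the order, whereas the paper only writes out $\Phi\Psi=\id$ explicitly.

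One small slip to fix: you write ``the quotient $R^n/P$ is the kernel of the composite $R^n\hookrightarrow R_S^n\twoheadrightarrow R_S^n/V$''. What you mean is that $P=V\cap R^n$ is the kernel, so by the first isomorphism theorem $R^n/P$ is the image and hence embeds into $R_S^n/V$. Your conclusion (that $R^n/P$ is torsion-free, hence projective over the Dedekind domain $R$) is correct; only the labeling is off.
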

	\begin{proof}
		We prove that these are nerves of isomorphic posets. Let $P$ be a projective module of rank $k$ over $R$, such that $[P]\in H$. Our definition of $H$ tells us that $P\otimes_{R} R_S$ is a free module of rank $k$ over $R_S$. If $P\leq Q$ in $\bbT_n(R,H)$, then the complement $Q/P$ satisfies $[Q/P] = [Q]-[P] \in H$. Since projectives are flat, all derived functors vanish and $$0\to P\otimes_{R} R_S \to Q\otimes_{R} R_S \to (Q/P)\otimes_{R} R_S \to 0$$is an exact sequence of free modules. In particular, $(P\otimes_{R} R_S)\leq (Q\otimes_{R} R_S)$. Therefore, extension of scalars produces a map $\bbT_n(R,H)\to \bbT_n(R_S)$.
		
		To construct an inverse, notice that $R^n$ is a subset of $R_S^n$. If $V$ is a rank $k$ free summand of $R_S^n$, consider $(V\cap R^n)\subseteq R^n$. This is finitely generated and torsion-free. Since $R$ is a Dedekind domain, $(V\cap R^n)$ is therefore projective. The quotient $R^n/(V\cap R^n)$ is a submodule of $R_S/V\cong R_S^{n-k}$, and so is also torsion-free and finitely generated. Therefore, the quotient map splits, so $(V\cap R^n)$ is in fact a direct summand of $R^n$. 
		
		To show that $(V\cap R^n)\tensor_{R}R_S = V$, notice that $(V\cap R^n)$ is projective of rank $k$, so $(V\cap R^n)\tensor_{R}R_S$ is a full rank submodule of $V$, and therefore is isomorphic to $V$. This additionally shows that $[V\cap R^n]\in H$. Therefore, $(-\cap R^n)$ is an inverse to $(-\tensor_R R_S)$, and we have constructed an isomorphism $\bbT_n(R,H)\cong \bbT_n(R_S)$.
	\end{proof}
	
	\begin{cor}
		Let $R$ be a Dedekind domain. Then, $\cT_n(R,\Pic(R))\cong \cT_n(\text{Frac } R)$. In particular, if $R$ is a PID, then $\cT_n(R)\cong \cT_n(\text{Frac } R)$
	\end{cor}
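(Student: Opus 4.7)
The plan is to derive this as an immediate consequence of Theorem \ref{thm:localTits}, applied to a suitably chosen multiplicative set. Specifically, I would take $S = R\setminus\{0\}$, so that $R_S = \mathrm{Frac}(R)$. The first step is then to identify the subgroup $H = \ker(\Pic(R)\to\Pic(R_S))$ in this setting: since $\mathrm{Frac}(R)$ is a field, every finitely generated projective module over it is free, so $\Pic(\mathrm{Frac}(R))$ is trivial. Hence the kernel $H$ is all of $\Pic(R)$.

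Plugging this identification into Theorem \ref{thm:localTits} then yields $\cT_n(\mathrm{Frac}(R)) \cong \cT_n(R,\Pic(R))$, which is the first assertion. For the second assertion, I would invoke the case of a PID: here $\Pic(R)$ is the trivial group, so $H = \{1\}$, and by the Remark preceding Theorem \ref{thm:localTits} (identifying $\cT_n(R,\{1\})$ with $\cT_n(R)$), this gives $\cT_n(R)\cong \cT_n(\mathrm{Frac}(R))$.

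There is essentially no obstacle here: the only point requiring any care is the justification that $S = R\setminus\{0\}$ satisfies the hypotheses needed to apply Theorem \ref{thm:localTits}, and in particular that the cofiniteness assumption on $\Spec(R_S)\subseteq \Spec(R)$ that appeared in Proposition \ref{lem:picLocalize} is not needed for Theorem \ref{thm:localTits} itself (inspection of the proof of Theorem \ref{thm:localTits} confirms that the only ingredients used are flatness of projective modules, finite generation and torsion-freeness, and splitting of quotients of projective modules over a Dedekind domain, all of which hold for arbitrary multiplicative sets). Thus the corollary follows in one line from Theorem \ref{thm:localTits} together with the triviality of $\Pic$ of a field.
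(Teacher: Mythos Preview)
Your proposal is correct and matches the paper's proof exactly: the paper also takes $S = R\setminus\{0\}$, observes that $\Pic(\text{Frac } R) = 1$ so that $H = \Pic(R)$, and applies Theorem \ref{thm:localTits}. Your remark that the cofiniteness hypothesis from Proposition \ref{lem:picLocalize} is not used in Theorem \ref{thm:localTits} is a valid and careful observation, though the paper does not comment on it.
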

	\begin{proof}
		Let $S = R\setminus\{0\}$, so that $S^{-1}R = \text{Frac } R$. In this case, $\Pic(\text{Frac } R) = 1$, so $\ker\left(\Pic(R)\to \Pic(\text{Frac } R)\right)= \Pic(R)$.
	\end{proof}
	In some cases, localization does not change the Tits complex. This is crucial:
	\begin{lem}\label{CheekyLocalize}
		Let $\cO$ be a Dedekind domain, and let $s\in \cO$ generate a principal prime ideal. Then, $$\cT_n(\cO[\tfrac{1}{s}])\cong \cT_n(\cO).$$
	\end{lem}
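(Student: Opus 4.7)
My plan is to apply Theorem \ref{thm:localTits} and then use Proposition \ref{lem:picLocalize} to show that the relevant controlling subgroup $H$ is trivial.

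Setting $S = \{1, s, s^2, \ldots\}$, Theorem \ref{thm:localTits} gives $\cT_n(\cO[\tfrac{1}{s}]) \cong \cT_n(\cO, H)$, where $H = \ker(\Pic(\cO) \to \Pic(\cO[\tfrac{1}{s}]))$. Since $\cT_n(\cO, \{1\}) = \cT_n(\cO)$ by the remark preceding Theorem \ref{thm:localTits}, it suffices to prove that $H$ is trivial.

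By Proposition \ref{lem:picLocalize}, $H$ equals the image of $D(\cO, \cO[\tfrac{1}{s}]) \to \Pic(\cO)$, where $D(\cO, \cO[\tfrac{1}{s}])$ is the free abelian group on those primes $\fp$ with $\fp \cap S \neq \emptyset$. A prime $\fp$ meets $S$ exactly when it contains some power of $s$, which in a domain is equivalent to $\fp \supseteq (s)$. Now I would use the hypothesis that $(s)$ itself is a nonzero prime: since $\cO$ is a Dedekind domain, nonzero primes are maximal, so the unique prime containing $(s)$ is $(s)$ itself. Therefore $D(\cO, \cO[\tfrac{1}{s}]) \cong \bbZ$, generated by $(s)$, and it maps into $\Pic(\cO)$ by $(s) \mapsto [(s)]$. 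But $(s)$ is principal, hence trivial in $\Pic(\cO)$, so the image $H$ is trivial, and the result follows.

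I do not anticipate a significant obstacle: all the heavy lifting has been done in Theorem \ref{thm:localTits} and Proposition \ref{lem:picLocalize}, and the only nontrivial input is the standard fact that nonzero primes in a Dedekind domain are maximal, which makes $(s)$ the unique prime meeting $S$.
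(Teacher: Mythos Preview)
Your proof is correct. Both you and the paper invoke Theorem \ref{thm:localTits} to reduce to showing that $H=\ker(\Pic(\cO)\to\Pic(\cO[\tfrac{1}{s}]))$ is trivial, but you then diverge. The paper argues directly at the level of modules: given a rank $1$ summand $L\subseteq\cO^n$ with $L[\tfrac{1}{s}]$ free, it picks a generator $v$ of $L[\tfrac{1}{s}]$, rescales by a suitable power of $s$ to obtain a unimodular vector in $\cO^n$, and concludes that $L$ was free. You instead invoke Proposition \ref{lem:picLocalize} (the divisor exact sequence), observe that the only prime meeting $\{s^k\}$ is $(s)$ itself since nonzero primes in a Dedekind domain are maximal, and conclude that $H$ is the image of the principal class $[(s)]$, hence trivial. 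Your route is slightly more conceptual and arguably makes better use of the surrounding machinery, since the paper states Proposition \ref{lem:picLocalize} but then bypasses it; the paper's argument is more elementary and self-contained. One small point worth making explicit: Proposition \ref{lem:picLocalize} requires $\Spec(\cO[\tfrac{1}{s}])$ to be cofinite in $\Spec(\cO)$, which follows immediately from your own computation that exactly one prime is inverted.
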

	\begin{proof}
		By Theorem \ref{thm:localTits}, it suffices to prove that $\ker\left(\Pic(\cO)\to \Pic( \cO[\frac{1}{s}])\right)$ is trivial.		
		Let $L\subseteq\cO^n$ be a rank 1 direct summand such that $L[\frac{1}{s}]$ is free, and let $v$ be its generator. Recall that $\abs{-}_{s}$ denotes the $s$-adic norm. Then, there is some $k\geq 0$ such that every coordinate $v_i$ satisfies $\abs{s^kv_i}_s \leq 1$, but where $\abs{s^kv_i}_s = 1$ for at least one coordinate. Equivalently, $s^kv$ is a unimodular vector in $\cO^n$. This generates a rank 1 free summand inside $L[\frac{1}{s}]\cap \cO^n = L$, and so is equal to $L$. Therefore, $L$ was free to begin with. This calculation shows that $\ker\left(\Pic(\cO)\to \Pic( \cO[\frac{1}{s}])\right)$ is trivial, as required.
	\end{proof}

	\begin{lem}\label{lem:Chebotarev}
		Let $K$ be a number field, and $\cO_K$ its ring of integers. Then, there are infinitely many principal prime ideals in $\cO_K$.
	\end{lem}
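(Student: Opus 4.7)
The plan is to invoke class field theory and the Chebotarev density theorem. Let $H$ denote the Hilbert class field of $K$, that is, the maximal abelian unramified extension of $K$. Class field theory provides a canonical isomorphism $\mathrm{Gal}(H/K) \cong \mathrm{Cl}(\cO_K)$ via the Artin reciprocity map, under which the Frobenius conjugacy class of an unramified prime $\fp$ of $\cO_K$ corresponds to the ideal class $[\fp] \in \mathrm{Cl}(\cO_K)$.

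The key consequence of this correspondence is that a prime $\fp$ of $\cO_K$ (unramified in $H/K$, which is automatic since $H/K$ is everywhere unramified) is principal if and only if its Frobenius is trivial, i.e., if and only if $\fp$ splits completely in $H/K$. So producing infinitely many principal primes reduces to producing infinitely many primes of $\cO_K$ that split completely in $H/K$.

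Applying the Chebotarev density theorem to the finite Galois extension $H/K$, the set of primes of $\cO_K$ whose Frobenius class is the identity has natural density $1/[H:K] = 1/|\mathrm{Cl}(\cO_K)| > 0$. In particular, this set is infinite, and every prime in this set is principal.

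There is no real obstacle here since both Chebotarev and the structure of the Hilbert class field are entirely standard; the only thing to watch is that we are applying Chebotarev to primes of $\cO_K$ (not of $\bbQ$), and that principality of $\fp$ in $\cO_K$ genuinely matches with complete splitting in the Hilbert class field. One could also give a more elementary argument using Dirichlet-style density in an arbitrary ray class field, but invoking the Hilbert class field directly is the cleanest path.
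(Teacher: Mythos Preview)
Your argument is correct and matches the paper's approach: the paper does not give a self-contained proof but simply cites Chapter~8 of Marcus, noting that the Hilbert class field is the key input and that the result is a Dirichlet-type density statement for the ideal class group. Your use of Chebotarev applied to $H/K$ is exactly this argument spelled out, so there is nothing substantively different to compare.
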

	This result is a generalization of Dirichlet's theorem on primes in arithmetic progressions. See Chapter 8 of \cite{Marcus} for a lucid exposition. Theorem 48 in this book provides the framework, which is applied to the ideal class group on p. 231. The existence of the Hilbert class field is critical.
	
	We are now ready to prove Theorem \ref{Connectivity}, which asserts that $\cT_n(R)$ is Cohen-Macaulay when $R$ satisfies either $SR_2$ or is a Dedekind domain of arithmetic type.
	
	\begin{proof}[Proof of Theorem \ref{Connectivity}]
		First, we prove that, if $R$ satisfies $SR_2$ or is a Dedekind domain of arithmetic type, then $\cT_n(R)$ is $(n-3)$-connected for all $n$.
		
		By Theorems \ref{thm:SR2SK1}, \ref{surjStab}, and \ref{vdK}, any ring $R$ satisfying $SR_2$ additionally satisfies all of the hypotheses of Theorem \ref{TitsFiltConn}, so that $\cT_n(R)\cong \cT_{n,n-1}(R)$ is $(n-3)$-connected.
		
		Let $K$ be a number field, $S$ a finite set of places including all infinite places, and $\cO_S$ the associated Dedekind domain of arithmetic type. By Theorems \ref{E2}, \ref{surjStab}, and \ref{CFP}, if $\abs{S}\geq 2$ and $S$ contains a non-complex place, all of the hypotheses of Theorem \ref{TitsFiltConn} will be satisfied. So, it remains to check the theorem for the rings of integers $\cO_K$ of those number fields $K$ which do not embed into $\bbR$.
		
		In these cases, we apply Lemma \ref{lem:Chebotarev} to find a principal prime ideal $s\cO_K$. Since $S = S_\infty\cup \{\abs{-}_s\}$ contains the non-complex place $\abs{-}_s$, the above arguments show that $\cT_n(\cO_S)$ is $(n-3)$-connected. Since $\cO_S\cong \cO_K[\frac{1}{s}],$ it follows from Lemma \ref{CheekyLocalize} that $\cT_n(\cO_K)$ is also $(n-3)$-connected.
		
		We have shown that $\cT_n(R)$ is $(n-3)$-spherical for all $n$. It remains to show that it is Cohen-Macaulay. We use Proposition \ref{prop:QuillenCM}. Let $V$ be a free and cofree summand of $R^n$, and choose an isomorphism $V\cong R^k$. Using this isomorphism, we see that $\bbT_n(R)_{<V} \cong \bbT_k(R)$, and so is $(k-2)$-spherical. We additionally know that $\bbT_n(R)_{>V}$ is the poset of free and cofree summands of $(R^n/V)\cong R^{n-k}$. It follows that $\bbT_n(R)_{>V}\cong \bbT_{n-k}(R)$, and so it is $(n-k-2)$-spherical. Let $V'>V$ in $\bbT_n(R)$, and choose an isomorphism $\phi:V'\cong \bbR^{k'}$. Then, $(\bbT_n(R)_{>V}\cap \bbT_n(R)_{<V'})\cong \bbT_{k'}(R)_{<\phi(V)}\cong \bbT_{k'-k}(R),$ which is $(k'-k-2)$-spherical. Therefore, by Proposition \ref{prop:QuillenCM}, $\cT_n(R)$ is Cohen-Macaulay.
	\end{proof}
	
	\section{Generators for $\tilde{\HH}_{n-2}(\cT_n(R))$}
	
	In the remainder of this paper, we consider the structure of the Steinberg module:
	$$\St_n(R) := \tilde{\HH}_{n-2}(\cT_n(R);\bbZ).$$
	This is a free $\bbZ$-module. In this section, we are interested in finding generators of $\St_n(R)$, and (for finite $R$) computing its rank.
	
	First, we recall Definition \ref{defn:aptclass}. Let $\{v_1,\dots v_n\}$ be a free basis of $R^n$. Consider the poset $\cP([n])$ of proper nonempty subsets of $\{1,\dots, n\}$ under containment. The nerve of this poset (cf.\ Definition \ref{defn:nerve}) is the barycentrically subdivided boundary of the $(n-1)$-simplex $\sd\partial\Delta^{n-1}$, which is homeomorphic to $S^{n-2}$. We have a map of posets $$f\colon\cP([n])\to \bbT_n(R)$$ sending a subset $\{i_1, \dots, i_k\}$ to $\Span\{v_{i_1}, \dots, v_{i_k}\}$. The \emph{apartment class} $\left[\begin{array}{c|c|c}v_1 & \dots & v_n\end{array}\right]$ is the pushforward of the fundamental class $f_*[S^{n-2}]\in \tilde{\HH}_{n-2}(\cT_n(R);\bbZ)$.
	
	\subsection{Proof of Theorem \for{toc}{\ref*{mainthm:AptsGen}}\except{toc}{\ref{mainthm:AptsGen}}}\;
	
	Theorem \ref{mainthm:AptsGen} states that $\St_n(R)$ is generated by apartment classes when $R$ satisfies $SR_2$, or is a Dedekind domain of arithmetic type with a non-complex place. We prove the following, more general result:
	
	\begin{thm}\label{AptGen}
		If $R$ is a ring such that $\cB_n(R)$ and $\cT_n(R)$ are both Cohen-Macaulay, then $\St_n(R)$ is generated by apartment classes.
	\end{thm}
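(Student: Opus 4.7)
The plan is to apply Quillen's Theorem \ref{Quillen} directly to the span map of posets $\Span\colon \bbB'_n(R) \to \bbT_n(R)$ (essentially the same setup as in Lemma \ref{AptsSurj}, applied globally rather than to a filtration piece). Once I know that $\Span_*$ is surjective on $\tilde{\HH}_{n-2}$, I will identify the image with the subgroup generated by apartment classes.

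To verify the hypotheses of Quillen's theorem, fix $y = [V] \in \bbT_n(R)$ where $V$ has rank $k$, so that $h(y) = k-1$. First, $\bbT_n(R)$ is $(n-2)$-spherical because $\cT_n(R)$ is Cohen-Macaulay. Second, Proposition \ref{prop:QuillenCM} applied to the CM poset $\bbT_n(R)$ yields that $(\bbT_n(R))_{>y}$ is $(n-k-2)$-spherical, which is the required $(n-2)-h(y)-1 = n-k-2$. Third -- the key technical point -- I identify $\Span/y$ with $\cB_k(R)$: if $x$ is a partial basis of $R^n$ with $\Span(x) \leq V$ in $\bbT_n(R)$, then by definition $V/\Span(x)$ is free, so the short exact sequence $0 \to \Span(x) \to V \to V/\Span(x) \to 0$ splits and $x$ extends to a basis of $V$; conversely every partial basis of $V$ satisfies this condition. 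The resulting complex $\cB_k(R)$ is $(k-1)$-spherical because $\cB_n(R)$ is Cohen-Macaulay: the link in $\cB_n(R)$ of a partial basis of size $n-k$ is isomorphic to $\cB_k(R)$, which therefore inherits Cohen-Macaulayness of dimension $k-1$. Quillen's theorem then yields that $\Span_* \colon \tilde{\HH}_{n-2}(\bbB'_n(R)) \to \St_n(R)$ is surjective.

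For the identification step, I exploit the fact that $\cB_n(R)$ is $(n-2)$-connected (being CM of dimension $n-1$). The long exact sequence of the pair $(\cB_n(R), \cB_n(R)^{[n-2]})$ produces a surjection
\[ \tilde{\HH}_{n-1}(\cB_n(R), \cB_n(R)^{[n-2]}) \twoheadrightarrow \tilde{\HH}_{n-2}(\cB_n(R)^{[n-2]}), \]
and the left side is generated cellularly by the top-dimensional simplices of $\cB_n(R)$, i.e., by complete bases $\beta = \{v_1, \ldots, v_n\}$ of $R^n$. The connecting homomorphism sends such a $\beta$ to the cycle represented by the boundary $\partial \beta$, which in the barycentric subdivision is exactly the image of $\sd \partial \Delta^{n-1}$ inside $\bbB'_n(R)$ under the map $I \mapsto \{v_i : i \in I\}$. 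Postcomposing with $\Span$ produces precisely the defining map of the apartment class $[v_1 \mid \cdots \mid v_n]$ in Definition \ref{defn:aptclass}. Combined with surjectivity of $\Span_*$, this proves that $\St_n(R)$ is generated by apartment classes.

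The main obstacle will be the identification $\Span/y \cong \cB_k(R)$ in hypothesis (iii): this depends on the cofree-sub ordering in $\bbT_n(R)$ being strong enough to guarantee that the relevant short exact sequences split, so that partial bases of $R^n$ whose span lies cofreely inside $V$ are exactly the partial bases of $V$. Once that identification is in hand, the remaining steps are bookkeeping: extracting the sphericity conditions from the two Cohen-Macaulay assumptions, and matching the boundary-of-basis cycle produced by the connecting homomorphism to the formal definition of an apartment class.
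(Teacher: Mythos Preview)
Your overall strategy matches the paper's: show that $\Span_* \colon \tilde{\HH}_{n-2}(\bbB'_n(R)) \to \St_n(R)$ is surjective via Quillen's Theorem~\ref{Quillen}, then precompose with the boundary map $C_{n-1}(\cB_n(R)) \to \tilde{\HH}_{n-2}(\cB'_n(R))$ (surjective because $\cB_n(R)$ is $(n-2)$-connected) and check that a basis simplex maps to the corresponding apartment class. The paper packages the first step as an invocation of Lemma~\ref{AptsSurj} and argues the second step identically.

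There is, however, a genuine error in your verification that $\Span/y \cong \cB_k(R)$ is $(k-1)$-spherical. Your claim that ``the link in $\cB_n(R)$ of a partial basis of size $n-k$ is isomorphic to $\cB_k(R)$'' is false: the link of $\sigma = \{v_1,\dots,v_{n-k}\}$ consists of all partial bases of $R^n$ extending $\sigma$, and these vectors need not lie in any fixed complement to $\Span(\sigma)$. Over $R = \bbZ$ with $n=2$ and $\sigma = \{e_1\}$, for instance, the link is the infinite set $\{(a,\pm 1) : a \in \bbZ\}$, while $\cB_1(\bbZ)$ has only two vertices. So Cohen--Macaulayness of $\cB_n(R)$ alone does not hand you sphericity of $\cB_k(R)$ for $k<n$ by this route.

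The paper is admittedly loose at the same spot: in the proof of Lemma~\ref{AptsSurj} it cites Theorem~\ref{vdK} for the sphericity of $\cB_k(R)$, which literally requires a stable-range hypothesis rather than the CM hypothesis stated. In practice this is harmless because Theorem~\ref{AptGen} is only ever applied (via Theorem~\ref{mainthm:AptsGen}) to rings for which $\cB_k(R)$ is known to be Cohen--Macaulay for all $k$ by van der Kallen or Church--Farb--Putman. But your specific justification is incorrect; to make your argument go through you should either strengthen the hypothesis to ``$\cB_k(R)$ is Cohen--Macaulay for all $k\le n$'' or invoke those external results directly, as the paper effectively does.
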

	\begin{proof}
		Notice that $C_{n-1}(\cB_n(R))$ is the abelian group of formal linear combinations of free bases of $R^n$. Let the map $$\Span: \cB'_n(R)\to \cT_n(R)$$ be the map defined at the end of \S3.3. Consider the composition: 
		$$\xymatrix{C_{n-1}(\cB_n(R))\ar[r]^\partial & \tilde{\HH}_{n-2}(\cB'_n(R))\ar[r]^(.6){\Span_*} & \St_n(R).}$$ where $\partial$ is the boundary map \[C_{n-1}(\cB_n(R)) \to C_{n-2}(\cB_n(R)) = C_{n-2}(\cB'_n(R)),\] whose image lies in $\tilde{\HH}_{n-2}(\cB'_n(R))\subseteq C_{n-2}(\cB'_n(R))$. 
		Since we have assumed that $\cB_n(R)$ is $(n-2)$-connected, $\tilde{\HH}_{n-2}(\cB_n(R)) $ is zero, and so the map $\partial$ must be surjective onto $\tilde{\HH}_{n-2}(\cB'_n(R))$. Lemma \ref{AptsSurj} implies the map $\Span_*$ is surjective. Therefore, the composite is surjective. It remains to show that $\Span_*\circ\;\partial\left(\{v_1,\dots, v_n\}\right)$ is equal to the apartment class $\left[\begin{array}{c|c|c}v_1 & \dots & v_n\end{array}\right]$. 
		
		By the definition of simplicial homology, a generator $\{v_1,\dots, v_n\}$ of $C_{n-1}(\cB_n(R))$ corresponds to a map $g\colon\Delta^{n-1} \to \cB_n(R)$, sending the $i$th vertex to $v_i$. We additionally know that $\partial \{v_1,\dots, v_n\} = g_*[\partial \Delta^{n-1}]$ inside $\HH_{n-2}(\cB_n'(R))$. Notice that, after subdividing $\partial \Delta^{n-1}$, the map $(\Span\circ g|_{\partial\Delta})$ is exactly the map $f$ used in the definition of the apartment class $\left[\begin{array}{c|c|c}v_1 & \dots & v_n \end{array}\right]$. Therefore, \[\Span_* \circ\;\partial\{v_1,\dots, v_n\} = (\Span \circ g)_*[\partial \Delta^{n-1}] =f_*[\partial \Delta^{n-1}] = \left[\begin{array}{c|c|c}v_1 & \dots & v_n \end{array}\right].\qedhere\]
	\end{proof}
	Theorem \ref{mainthm:AptsGen} is simply Theorem \ref{AptGen}, combined with Theorem \ref{Connectivity} (to verify that $\cT_n(R)$ is Cohen-Macaulay) and Theorems \ref{vdK} and \ref{CFP} (to verify that $\cB_n(R)$ is Cohen-Macaulay).
	
	\subsection{Proof of Theorem \for{toc}{\ref*{mainthm:UTApts}}\except{toc}{\ref{mainthm:UTApts}}}\;
	
	Given an apartment class $\left[\begin{array}{c|c|c}v_1 & \dots & v_n\end{array}\right]$, we can form a matrix whose $i$th column is the vector $v_i$. Since the $v_i$ form a basis of $R^n$, this matrix is in $\GL_n(R)$. We say that an apartment class is \emph{upper-triangular} if its associated matrix is strictly upper triangular (i.e., has 1 in all diagonal entries).
	When $R$ is a field, the Solomon-Tits theorem tells us that $\St_n(R)$ has a basis given by the upper-triangular apartment classes. In the case of a general commutative ring, Theorem \ref{mainthm:UTApts} states that these are still linearly independent, but will not span unless $R$ is a field.
	\begin{proof}[Proof of Theorem \ref{mainthm:UTApts}]
		First, we show linear independence. Associated to any top-dimensional simplex $\sigma$ of $\cT_n(R)$, there is a \emph{chamber map} $C_{n-2}(\cT_n(R)) \to \bbZ$, defined as the projection onto the summand corresponding to $[\sigma]$. Since $\HH_{n-2}(\cT_n(R))\subseteq C_{n-2}(\cT_n(R))$, these chamber maps restrict to $\St_n(R)$. Given an apartment class $A = \left[\begin{array}{c|c|c}v_1 & \dots & v_n\end{array}\right]$, let $c_A$ be the chamber map associated to the flag:
		$$0\subsetneq \Span\{v_n\}\subsetneq \Span\{v_{n-1},v_n\}\subsetneq\dots \subsetneq \Span\{v_2,\dots ,v_n\} \subsetneq R^n$$
		If $A$ is upper-triangular, we call such a flag a \emph{reverse upper-triangular flag}. We will show that if $A$ and $B$ are upper-triangular apartment classes, then $$c_A(B) = \begin{cases}
			1 & \text{if } A=B \\ 0 & \text{otherwise.}
		\end{cases}$$
		It is clear that $c_A(A) = 1$, from the definition of apartment classes. Assume now that $c_A(B) \ne 0$. We will show that $A=B$. Set $A = \left[\begin{array}{c|c|c}v_1 & \dots & v_n\end{array}\right]$ and $B = \left[\begin{array}{c|c|c}w_1 & \dots & w_n\end{array}\right]$, and let $\cF$ be the reverse upper-triangular flag associated to $A$. Since $c_A(B) \ne 0$, it follows that $B$ is supported on the simplex corresponding to $\cF$. Therefore, under some ordering, $\{w_i\}$ forms a flag-basis for $\cF$.
		
		First, we show that $w_n = v_n$. Since $\GL_1(R) = R^\times$, whichever element of $\{w_i\}$ spans the 1-dimensional subspace of $\cF$ must equal $uv_n$ for some $u\in R^\times$. The last entry of $v_n$ is nonzero. Since the ordered basis $(w_1,\dots, w_n)$ is upper-triangular, the only such element of $\{w_i\}$ is $w_n$. So, $w_n = uv_n$. Indeed, $u=1$, since the last entry of both $v_n$ and $w_n$ is 1.

		Likewise, the rank 2 summand must be $\Span\{w_n,w_{n-1}\}$, since $v_{n-1}$ has last coordinate 0 and second-last coordinate 1, and every other $w_j$ have the last two coordinates equal to zero. Continuing in this way, we see that $w_i = v_i$, so that $A=B$.
		
		It remains to prove that $\St_n(R)$ is not generated by upper-triangular apartment classes if $R$ is not a field. Pick a maximal ideal $\fm$ of $R$, and choose a nonzero element $m\in \fm$. Define $$\eta = \left[\begin{array}{c|c|c}  & 1 & \\ 1 & m & \\ & & \id_{n-2}\end{array}\right] + [\id_n]\in \St_n(R).$$
		Here we abuse notation by conflating an apartment class with the matrix produced by omitting some vertical bars. Notice that $c_A(\eta) = 0$, for any upper-triangular apartment class $A$. Indeed, the only upper-triangular flag on which either term of $\eta$ is supported is that associated to $[\id_n]$, but the relevant coefficients have opposite signs. Nevertheless, $\eta\ne 0$, since it has nonzero support on the flag
		\begin{align*}
			0\subsetneq\Span\{e_1+me_2\}\subsetneq\Span\{e_1, e_2\}\subsetneq\Span\{e_1, e_2&, e_3\}\subsetneq\dots\\ &\dots\subsetneq \Span\{e_1, \dots, e_{n-1}\}\subsetneq R^n.\qedhere
		\end{align*}
	\end{proof}
	\subsection{Proof of Theorem \for{toc}{\ref*{recdim}}\except{toc}{\ref{recdim}}}\;
	
	For the remainder of the paper, we specialize to the case of finite rings. In this case, $\St_n(R)$ is a finitely generated free $\bbZ$-module, and we are interested in its rank. 
	
	Recall that the Grassmannian $\Gr_k^n(R)$, $n\geq k\geq 0$, is the set of rank $k$ free and cofree direct summands of $R^n$. This is also the set of good flags of type $(k,n-k)$, and hence is a transitive $\GL_n(R)$-set. We note that $\abs{\Gr_0^n(R)} = 1$.
	
	Set $d_n = \rk \St_n(R)$ for $n\geq 1$, and define $d_0 = 1$. To prove Theorem \ref{recdim}, we must demonstrate the following recursive formula for $d_n$:
	$$d_0 = 1; \qquad \qquad d_n =\sum_{i=1}^n (-1)^{i-1} \abs{\Gr_{n-i}^n(R)} d_{n-i}\ \text{ for }n\geq 1.$$
	
	\begin{proof}[Proof of Theorem \ref{recdim}]
		Proposition \ref{FiltQuot} and Lemma \ref{AptsSurj} provide the following short exact sequence:
		$$0\to \tilde{\HH}_m(\cT_{n,m+1}(R))\to \bigoplus\tilde{\HH}_{m-1}(\cT_{m+1}(R))\to \tilde{\HH}_{m-1}(\cT_{n,m}(R)) \to 0.$$
		Setting $d_{n,m} = \rk \tilde{\HH}_{m-1}(\cT_{n,m}(R))$, this yields the recurrence $$d_{n,m+1} = \abs{\Gr_{m+1}^n(R)}d_{m+1} - d_{n,m}.$$ So, $$d_n = d_{n,n-1} = \abs{\Gr_{n-1}^n(R)}d_{n-1} - \abs{\Gr_{n-2}^n(R)}d_{n-2}+\dots \pm \abs{\Gr_{2}^n(R)}d_{2}\mp d_{n,1}.$$
		Since $\cT_{n,1}(R)$ is $\bbP^n(R)$ thought of as a discrete set, $d_{n,1} = \abs{\bbP^n(R)}-1$. Since $d_1 = \rk \tilde{\HH}_{-1}(\emptyset) = 1$, we can rewrite $d_{n,1}$ as $\abs{\Gr_1^n(R)} d_1 - \abs{\Gr_0^n(R)} d_0$. This produces the required formula.
	\end{proof}
	We remark that this formula reproduces the known rank formula for finite fields. Indeed, when $R=\bbF_q$, we will use Theorem \ref{recdim} to show that $$d_n = q^{\binom{n}{2}}.$$ Consider the following classical result:
		$$\prod_{k=0}^{n} (X+q^kY) = \sum_{k=0}^n q^{\binom{k}{2}} \abs{\Gr_k^n(\bbF_q)} X^{n-k}Y^k$$
	Setting $X = -1, Y=1$, we extract 
	$$0 =  \sum_{k=0}^n q^{\binom{k}{2}} \abs{\Gr_k^n(\bbF_q)} (-1)^{n-k}$$ 
	Rearranging, we have:
	$$q^{\binom{n}{2}} = \sum_{i=1}^{n} (-1)^{i+1} \abs{\Gr_{n-i}^n(\bbF_q)} q^{\binom{n-i}{2}}.$$
	Therefore $d_n = q^{\binom{n}{2}}$ satisfies the recurrence, as expected.
	
	\subsection{Some Computations with Theorem \for{toc}{\ref*{recdim}}\except{toc}{\ref{recdim}}}\;
	
	Outside of the field case, it seems difficult to produce a closed form for $d_n$. However, it is fairly accessible to both computer implementation and asymptotic analysis. As a first step, we find the cardinality of the Grassmannian. 
	
	We recall that a finite ring $R$ is necessarily semilocal, and so if $\cJ$ is its Jacobson radical, $R/\cJ$ is a product of fields.
	\begin{prop}\label{GrassSize}
		Let $R$ be a finite ring, and $\cJ$ be its Jacobson radical. Let $R/\cJ\cong F_1\times\dots \times F_\ell$. Then, 
		$$\abs{\Gr_k^n(R)} = \abs{\cJ}^{k(n-k)}\prod_{i=1}^\ell \abs{\Gr_k^n(F_i)}.$$
	\end{prop}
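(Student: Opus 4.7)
The plan is to apply orbit–stabilizer to the transitive $\GL_n(R)$-action on $\Gr_k^n(R)$ given by Corollary~\ref{flagTrans}, and then compute the relevant group orders by reducing modulo the Jacobson radical. Taking the basepoint $V_0 = Re_1\oplus\cdots\oplus Re_k$, its stabilizer is the block upper-triangular parabolic
\[
P_{k,n-k}(R) = \left\{\begin{bmatrix} A & B \\ 0 & D \end{bmatrix} \;\middle|\; A\in\GL_k(R),\ D\in\GL_{n-k}(R),\ B\in\Mat_{k,n-k}(R)\right\},
\]
of order $|\GL_k(R)|\cdot|\GL_{n-k}(R)|\cdot |R|^{k(n-k)}$, so
\[
|\Gr_k^n(R)| \;=\; \frac{|\GL_n(R)|}{|\GL_k(R)|\cdot|\GL_{n-k}(R)|\cdot |R|^{k(n-k)}}.
\]

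Next I would compute $|\GL_m(R)|$ via the reduction-mod-$\cJ$ homomorphism $\pi_m\colon \GL_m(R)\to\GL_m(R/\cJ)$. By Nakayama's lemma, a square matrix over $R$ is invertible iff its image in $R/\cJ$ is invertible; hence $\pi_m$ is surjective with kernel $I+\Mat_m(\cJ)$ of order $|\cJ|^{m^2}$. Combining this with $R/\cJ\cong\prod_i F_i$ and the compatibility of $\GL_m$ with direct products of rings gives
\[
|\GL_m(R)| = |\cJ|^{m^2}\prod_{i=1}^{\ell}|\GL_m(F_i)|.
\]
The identity $|R|=|\cJ|\cdot|R/\cJ|$ further yields $|R|^{k(n-k)} = |\cJ|^{k(n-k)}\prod_i |F_i|^{k(n-k)}$.

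Substituting these three expressions into the orbit–stabilizer formula, the combined exponent of $|\cJ|$ simplifies to $n^2 - k^2 - (n-k)^2 - k(n-k) = k(n-k)$, and the remaining per-field factors assemble to $\prod_i |\Gr_k^n(F_i)|$ by the classical Grassmannian formula $|\Gr_k^n(F_i)| = |\GL_n(F_i)|/(|\GL_k(F_i)|\cdot|\GL_{n-k}(F_i)|\cdot|F_i|^{k(n-k)})$. The main point requiring justification is the surjectivity of $\pi_m$ — equivalently, that units of $R/\cJ$ lift to units of $R$, which is Nakayama — and the remainder is exponent-accounting.
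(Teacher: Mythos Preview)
Your proposal is correct and follows essentially the same approach as the paper: orbit--stabilizer for the transitive $\GL_n(R)$-action on $\Gr_k^n(R)$, identification of the stabilizer as the block upper-triangular parabolic, and computation of $|\GL_m(R)|$ via the exact sequence $1\to \id+\Mat_m(\cJ)\to \GL_m(R)\to \GL_m(R/\cJ)\to 1$. The only cosmetic difference is that you appeal to Nakayama for the invertibility-lifting step, whereas the paper observes directly that $\det(\id+\Mat_n(\cJ))\subseteq 1+\cJ\subseteq R^\times$.
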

	\begin{proof}
		Since $\Gr_k^n(R)$ is a transitive $\GL_n(R)$-set, we can compute its cardinality using an orbit-stabilizer argument. 
		
		Recall that for an ideal $I$, we write $\Gamma_n(I) = \ker(\GL_n(R)\to\GL_n(R/I))$. We have $$\Gamma_n(I) = \left(\id + \Mat_n(I)\right)\cap \GL_n(R)$$ where $\Mat_n(I)$ is the ideal in $\Mat_n(R)$ of matrices all of whose entries lie in $I$. We observe that, since $\cJ$ is the Jacobson radical, $\left(\id + \Mat_n(I)\right)\subset\GL_n(R)$. Indeed, the determinant of any matrix in $(\id + \Mat_n(\cJ))$ lies in the coset $1 + \cJ$, which consists entirely of units. Therefore, $$\Gamma_n(\cJ) = \id + \Mat_n(\cJ).$$
		
		This observation tells us that 
		$$\abs{\GL_n(R)} = \abs{\cJ}^{n^2} \prod_{i=1}^\ell \abs{\GL_n(F_i)}.$$
		
		The stabilizer of $Re_1\oplus\dots\oplus Re_k\in \Gr_k^n(R)$ consists of matrices of the form:$$\begin{bmatrix} A & M \\ 0 & B \end{bmatrix} \text{ such that } A\in\GL_k(R),\ B\in\GL_{n-k}(R),\ M\in \Mat_{k,n-k}(R).$$ 
		
		So, we compute:
		\begin{align*}
			\abs{\Gr_k^n(R)} &= \dfrac{\abs{\GL_n(R)}}{\abs{\GL_k(R)}\abs{\GL_{n-k}(R)}\abs{\Mat_{k,n-k}(R)}}\\
			&= \dfrac{\abs{\cJ}^{n^2} \prod_{i=1}^\ell\abs{\GL_n(F_i)}}{\left(\abs{\cJ}^{k^2} \prod_{i=1}^\ell \abs{\GL_k(F_i)}\right)\left(\abs{\cJ}^{(n-k)^2} \prod_{i=1}^\ell \abs{\GL_{n-k}(F_i)}\right)\abs{R}^{k(n-k)}}\\
			&= \dfrac{\abs{\cJ}^{n^2}}{\abs{\cJ}^{k^2}\abs{\cJ}^{(n-k)^2} \abs{\cJ}^{k(n-k)}} \prod_{i=1}^\ell \dfrac{\abs{\GL_n(F_i)}}{\abs{\GL_k(F_i)}\abs{\GL_{n-k}(F_i)}\abs{F_i}^{k(n-k)}}\\
			&= \abs{\cJ}^{k(n-k)}\prod_{i=1}^\ell \abs{\Gr_k^n(F_i)}.\qedhere
		\end{align*}
	\end{proof}
	One upshot of Theorem \ref{recdim} and Proposition \ref{GrassSize} is that the dimension of $\St_n(R)$ does not depend too much on the algebraic structure of $R$, only on the cardinality of its Jacobson radical and of its residue fields. For example:
	\begin{exmp}
		The rings $\bbZ/p^2\bbZ$ and $\bbF_p[\epsilon]$ are not isomorphic. Indeed, their underlying abelian groups are not isomorphic. However, by Theorem \ref{recdim}, 
		$$\rk \St_n(\bbZ/p^2\bbZ)= \rk\St_n(\bbF_p[\epsilon]).$$
		This implies that $\cT_n(\bbZ/p^2\bbZ)$ and $\cT_n(\bbF_p[\epsilon])$ are homotopy equivalent.
	\end{exmp}
		
	Along these lines, we prove an asymptotic result. This shows that, in a sense, the upper-triangular classes span ``most" of the Steinberg representation for finite local rings.
	\begin{prop}
		Let $d_n(q,m) = \rk \St_n(R)$, where $(R,\fm)$ is any finite local ring such that $R/\fm = \bbF_q$ and $\abs{\fm} = m$. Fixing $n$, we have: 
		\begin{itemize}
			\item Asymptotically in $m$, $$d_n(q,m)\sim m^{\binom{n}{2}} \prod_{k=2}^n \abs{\Gr_{k-1}^k(\bbF_q)}.$$
			\item Asymptotically in $q$, $$d_n(q,m)\sim m^{\binom{n}{2}} q^{\binom{n}{2}} = \abs{R}^{\binom{n}{2}}.$$
		\end{itemize}
		These formulas are asymptotically equivalent with respect to $q$.
	\end{prop}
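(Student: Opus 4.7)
The plan is to combine Theorem~\ref{recdim} with Proposition~\ref{GrassSize} to obtain an explicit recurrence for $d_n(q,m)$ and then run a separate asymptotic analysis in each variable. Since $R$ is a finite local ring with residue field $\bbF_q$ and $\abs{\fm}=m$, Proposition~\ref{GrassSize} specializes to $\abs{\Gr_k^n(R)} = m^{k(n-k)}\,\abs{\Gr_k^n(\bbF_q)}$, so Theorem~\ref{recdim} reads
\[ d_n(q,m) = \sum_{i=1}^n (-1)^{i-1}\, m^{i(n-i)}\, \abs{\Gr_{n-i}^n(\bbF_q)}\, d_{n-i}(q,m), \qquad d_0 = 1. \]
A short rearrangement shows the $i$-th summand has degree $i(n-i)+\binom{n-i}{2} = \binom{n}{2}+\tfrac{i(1-i)}{2}$ in $m$, and (using $\abs{\Gr_k^n(\bbF_q)}=\binom{n}{k}_q \sim q^{k(n-k)}$) the same expression for its degree in $q$. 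This is \emph{strictly} maximized at $i=1$, so in each limiting regime the $i=1$ term dominates while the rest are strictly lower order.

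For the first asymptotic, I induct on $n$ with hypothesis $d_k(q,m)\sim c_k(q)\,m^{\binom{k}{2}}$, where $c_k(q):=\prod_{j=2}^{k}\abs{\Gr_{j-1}^j(\bbF_q)}$ (and $c_0=c_1=1$). Extracting the $i=1$ term from the recurrence yields
\[ d_n(q,m) \sim m^{n-1}\,\abs{\Gr_{n-1}^n(\bbF_q)}\, c_{n-1}(q)\, m^{\binom{n-1}{2}} = c_n(q)\, m^{\binom{n}{2}}, \]
using $(n-1)+\binom{n-1}{2}=\binom{n}{2}$, which closes the induction. For the second asymptotic I run the identical induction on $d_k(q,m)\sim a_k(m)\,q^{\binom{k}{2}}$, using the leading behavior $\abs{\Gr_k^n(\bbF_q)}\sim q^{k(n-k)}$; the recurrence reduces to $a_n(m) = m^{n-1}\,a_{n-1}(m)$ with $a_1(m)=1$, giving $a_n(m) = m^{\binom{n}{2}}$, as claimed.

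For the final equivalence-in-$q$ statement, I just check that $\abs{\Gr_{k-1}^k(\bbF_q)} = (q^k-1)/(q-1) \sim q^{k-1}$, so that $c_n(q) \sim \prod_{k=2}^n q^{k-1} = q^{\binom{n}{2}}$, making the first formula collapse to $m^{\binom{n}{2}}q^{\binom{n}{2}}$ as $q\to\infty$. The only real subtlety is confirming that no cancellation can spoil the alternating sum: the degree function $\binom{n}{2} + \tfrac{i(1-i)}{2}$ is strictly decreasing on $i\geq 1$, so the $i\geq 2$ terms contribute to strictly lower order, and the coefficient of the leading $i=1$ term is positive by the inductive hypothesis. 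Nothing else is delicate; once the recurrence and the unique dominance of $i=1$ are in hand, both asymptotics are immediate.
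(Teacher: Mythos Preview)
Your proof is correct and follows essentially the same approach as the paper: both arguments feed Proposition~\ref{GrassSize} into the recurrence of Theorem~\ref{recdim}, induct on $n$, compute that the $i$-th term has degree $\binom{n}{2}-\binom{i}{2}$ in the relevant variable, and extract the dominant $i=1$ term. Your version is slightly more explicit about why the alternating signs cannot cause cancellation and about the final equivalence $c_n(q)\sim q^{\binom{n}{2}}$, but the strategy and all key steps are the same.
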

	\begin{proof}
		We prove this by induction on $n$. The base case $n=1$ is automatic, since $d_1(q,m) = 1$. Assume the first formula holds for all $n'<n$. Then, by Theorem \ref{recdim}, we have:
		\begin{align*}
			d_n(q,m)  &= \sum_{i=1}^n (-1)^{i-1} \abs{\Gr_{n-i}^n(R)} d_{n-i}(q,m)\\
			&= \sum_{i=1}^n (-1)^{i-1} \abs{\Gr_{n-i}^n(\bbF_q)}m^{i(n-i)} d_{n-i}(q,m)\\
			&\sim \sum_{i=1}^n\left( (-1)^{i-1} \abs{\Gr_{n-i}^n(\bbF_q)}m^{i(n-i)} m^{\binom{n-i}{2}} \prod_{k=2}^{n-i} \abs{\Gr_{k-1}^k(\bbF_q)}\right) \\
			&= \sum_{i=1}^n\left( (-1)^{i-1} \abs{\Gr_{n-i}^n(\bbF_q)}m^{\frac{1}{2}((n^2-n) - (i^2-i))} \prod_{k=2}^{n-i} \abs{\Gr_{k-1}^k(\bbF_q)}\right). 
		\end{align*}
		This is a polynomial in $m$, so is asymptotically equivalent to its leading term. By inspection, this is the $i=1$ term:
		$$d_n(q,m) \sim \abs{\Gr_{n-1}^n(\bbF_q)}m^{\binom{n}{2}} \prod_{k=2}^{n-1} \abs{\Gr_{k-1}^k(\bbF_q)} =  m^{\binom{n}{2}} \prod_{k=2}^n \abs{\Gr_{k-1}^k(\bbF_q)}.$$
		This proves the first formula. The second formula is similar, but we additionally need an approximation for $\abs{\Gr_k^n(\bbF_q)}$. The following is classical:
		$$\abs{\Gr_k^n(\bbF_q)} \sim q^{k(n-k)}$$ with respect to $q$. Using Theorem \ref{recdim} as above, we have:
		\begin{align*}
			d_n(q,m)  &= \sum_{i=1}^n (-1)^{i-1} \abs{\Gr_{n-i}^n(R)} d_{n-i}(q,m)\\
			&= \sum_{i=1}^n (-1)^{i-1} \abs{\Gr_{n-i}^n(\bbF_q)}m^{i(n-i)} d_{n-i}(q,m)\\
			&\sim \sum_{i=1}^n\left( (-1)^{i-1} q^{i(n-i)}m^{i(n-i)} q^{\binom{n-i}{2}}m^{\binom{n-i}{2}} \right) \\
			&= \sum_{i=1}^n (-1)^{i-1} q^{\frac{1}{2}((n^2-n) - (i^2-i))}m^{\frac{1}{2}((n^2-n) - (i^2-i))}.  
		\end{align*}
		As above, this is a polynomial in $q$, thus asymptotically equivalent to the $i=1$ term:
		\[d_n(q,m)\sim m^{\binom{n}{2}} q^{\binom{n}{2}}.\qedhere\]
	\end{proof}
		
	\subsection{Proof of Theorem \for{toc}{\ref*{mainthm:cohomology}}\except{toc}{\ref{mainthm:cohomology}}}\;
	
	In this section, we produce a lower bound on the top-dimensional cohomology of certain principal congruence subgroups. The relevant tool is the following theorem of Borel and Serre.
	\begin{thm}[Borel-Serre, \cite{BS}]
		Let $n\geq 2$. Let $K$ be a number field, and $\cO$ its ring of integers. Let $\nu$ be the rational cohomological dimension of $\SL_n(\cO)$.
		Then, for any finite-index subgroup $\Gamma$ of $\SL_n(\cO)$ and any $\bbQ[M]$-module $M$, there is an isomorphism $$\HH^q(\Gamma; M) \cong \HH_{\nu-q}(\Gamma; \St_n(K)\otimes_\bbQ M).$$
	\end{thm}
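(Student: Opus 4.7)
The plan is to deduce this from the standard topological setup of Borel and Serre: realize $\Gamma$ as the fundamental group of a compact aspherical manifold with corners, and then invoke Poincar\'e--Lefschetz duality, using the Solomon--Tits theorem to recognize the boundary cohomology as a shifted copy of $\St_n(K)$.

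First, I would introduce the symmetric space $X = \SL_n(K\otimes_\bbQ\bbR)/K_\infty$ (where $K_\infty$ is a maximal compact), which is diffeomorphic to a Euclidean space of some dimension $d$ and carries a proper action of $\SL_n(\cO)$. I would then recall Borel and Serre's partial compactification $\overline{X}\supseteq X$: a contractible manifold with corners of dimension $d$ on which $\SL_n(\cO)$ acts properly and cocompactly, whose boundary $\partial\overline{X}$ has a stratification by rational parabolic subgroups whose nerve is $\GL_n(\cO)$-equivariantly homotopy equivalent to the rational Tits building $\cT_n(K)$. By the classical Solomon--Tits theorem, $\partial\overline{X}\simeq\bigvee S^{n-2}$, with $\tilde{\HH}_{n-2}(\partial\overline{X};\bbZ)\cong\St_n(K)$ as a $\GL_n(\cO)$-module and all lower reduced homology vanishing.

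Next, I would reduce to the torsion-free case. Pick a torsion-free finite-index normal subgroup $\Gamma'\leq\Gamma$ (possible by Selberg's lemma or Minkowski). Then $\Gamma'$ acts freely on $\overline{X}$, so $Y := \Gamma'\backslash\overline{X}$ is a compact aspherical manifold with corners, hence a finite model for $B\Gamma'$. For any $\bbQ[\Gamma']$-module $M$, Poincar\'e--Lefschetz duality for manifolds with boundary (extended to manifolds with corners by collaring) gives
\[
\HH^q(\Gamma'; M) \;\cong\; \HH^q(Y; \widetilde{M}) \;\cong\; \HH_{d-q}\bigl(Y,\partial Y;\widetilde{M}\bigr),
\]
where $\widetilde{M}$ is the local system associated to $M$. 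Unwinding, the right-hand side is $\HH_{d-q}(\Gamma'; C_*(\overline{X},\partial\overline{X})\otimes M)$. The long exact sequence of the pair $(\overline{X},\partial\overline{X})$ together with contractibility of $\overline{X}$ yields a quasi-isomorphism $C_*(\overline{X},\partial\overline{X})\simeq C_{*-1}(\partial\overline{X})_{\mathrm{red}}$, and combined with the Solomon--Tits computation one gets
\[
C_*(\overline{X},\partial\overline{X})\otimes_\bbZ\bbQ \;\simeq\; \St_n(K)\otimes_\bbZ\bbQ\,[n-1],
\]
i.e.\ concentrated in degree $n-1$. Plugging this in and setting $\nu := d - (n-1)$ gives $\HH^q(\Gamma'; M) \cong \HH_{\nu-q}(\Gamma'; \St_n(K)\otimes_\bbQ M)$; since $\overline{X}$ is a proper cocompact $E\Gamma'$ of dimension $d$ with $(n-2)$-spherical boundary, this $\nu$ coincides with the rational cohomological dimension of $\SL_n(\cO)$.

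Finally, to promote the isomorphism from $\Gamma'$ to the original finite-index $\Gamma$, I would use that the whole construction is $\Gamma$-equivariant: replace $\Gamma'$ by $\Gamma$ in the hypercohomology calculation, using that $\Gamma$ still acts properly (but not freely) on $\overline{X}$. Because every finite stabilizer becomes trivial after tensoring with $\bbQ$, the Cartan--Leray/equivariant cohomology spectral sequence gives $\HH^*(\Gamma;M) \cong \HH^*_\Gamma(\overline{X};\widetilde{M})$ rationally, and the duality argument of the previous paragraph goes through verbatim on the equivariant chains. The principal obstacle is the geometric input of Step~2, namely the existence of the bordification $\overline{X}$ with the correct boundary homotopy type; this is the main theorem of \cite{BS} and I would quote it rather than reprove it. The remaining steps are then formal applications of Poincar\'e--Lefschetz duality and the already-established Solomon--Tits theorem.
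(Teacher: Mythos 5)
The paper does not prove this theorem at all: it is an imported black box, attributed to Borel--Serre and cited as \cite{BS}, and everything that follows in the paper (the passage from coinvariants to top cohomology, and then Theorem \ref{mainthm:cohomology}) simply invokes it. So there is no internal proof to compare against. That said, your blind reconstruction is a correct outline of the standard argument from \cite{BS}: bordify the symmetric space to a contractible manifold-with-corners $\overline{X}$ on which $\SL_n(\cO)$ acts properly and cocompactly, identify $\partial\overline{X}$ up to homotopy with the rational Tits building, pass to a torsion-free finite-index $\Gamma'$ to get a compact aspherical model, and then run Poincar\'e--Lefschetz duality together with the long exact sequence of the pair $(\overline{X},\partial\overline{X})$ and the classical Solomon--Tits theorem. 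The shift $[n-1]$ and the identification $\nu = d - (n-1)$ are exactly the bookkeeping in \cite{BS}.

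Two small points worth tightening if you were to turn this into a full proof. First, the claim that $\nu$ ``coincides with the rational cohomological dimension of $\SL_n(\cO)$'' is not automatic from the geometry alone; it uses that $\overline{X}$ is a finite-dimensional cocompact $E\Gamma'$ together with the fact that $\HH^{\nu}(\Gamma';\St_n(K)\otimes\bbQ)\cong \HH_0(\Gamma';\St_n(K)\otimes\bbQ)\neq 0$ (e.g.\ because apartment classes give nonvanishing coinvariants), which pins down $\nu$ as the exact cohomological dimension rather than merely an upper bound. Second, the final ``promotion'' step from $\Gamma'$ to $\Gamma$ is cleanest via the transfer: since $[\Gamma:\Gamma']<\infty$ and coefficients are $\bbQ$-modules, restriction followed by transfer is multiplication by the index, so $\HH^*(\Gamma;M)$ is the $\Gamma/\Gamma'$-invariants of $\HH^*(\Gamma';M)$; applying this to both sides of the $\Gamma'$-duality isomorphism, which is natural and $\Gamma/\Gamma'$-equivariant, gives the statement for $\Gamma$. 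The Cartan--Leray route you sketch works too, but the transfer argument avoids having to set up equivariant Poincar\'e duality on a space where $\Gamma$ does not act freely.
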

	In particular, 
		$$\HH^{\nu}(\Gamma;\bbQ) \cong \HH_0(\Gamma; \St_n(K)) \cong (\St_n(K))_\Gamma$$
	where $(\St_n(K))_\Gamma$ is the module of coinvariants.
	\begin{proof}[Proof of Theorem \ref{mainthm:cohomology}]
		Let $\cO$ be a number ring. We aim to prove that \[\dim \HH^{\nu}(\Gamma_n(I);\bbQ)\geq \rk \St_n(\cO/I).\] By Borel-Serre's theorem, it suffices to find a surjective $\SL_n(\cO)$-equivariant map from $\St_n(K)$ to $\St_n(\cO/I)$. Since $\St_n(\cO/I)$ is $\Gamma_n(I)$-invariant, any such map will factor through a surjective map $$(\St_n(K))_{\Gamma_n(I)}\to \St_n(\cO/I).$$ This implies the desired inequality.
		
		We construct the desired map. Let $S'$ be the set of places corresponding to prime ideals $\fp$ such that $I\not\subseteq \fp$, and let $S= S'\cup S_\infty$. Notice that $\cO_S$ is semilocal, and so $\Pic(\cO_S) = 1$. Therefore, there is a $\SL_n(\cO)$-equivariant homeomorphism $\cT_n(\cO_S)\cong \cT_n(K)$. Notice also that $I\cO_S$ is a proper ideal of $\cO_S$, and that $\cO_S/I\cO_S \cong \cO/I$. This induces an $\SL_n(\cO)$-equivariant map $\cT_n(\cO_S)\to \cT_n(\cO/I)$. Taking homology, we have an equivariant map
		$$\St_n(K)\cong \St_n(\cO_S)\to \St_n(\cO/I).$$ It suffices to show that the rightmost map is surjective. Call this map $\phi$. Since $I$ is a nonzero ideal, $\cO/I$ is finite, and so satisfies $SR_2$. Therefore, Theorem \ref{mainthm:AptsGen} applies, so that $\St_n(\cO/I)$ is generated by apartment classes. 
		
		We show that each apartment class in $\St_n(\cO/I)$ is the image of some apartment class in $\St_n(\cO)$, so that $\phi$ is surjective. Let $A$ be an apartment class in $\St_n(\cO/I)$, and say that $A=\left[\begin{array}{c|c|c}v_1 & \dots & v_n \end{array}\right]$. These vectors are a basis of $(\cO/I)^n$, but this basis is not necessarily the image of a basis of $\cO^n$. However, it may be transformed into such a basis without changing $A$. There is a unique $u\in (\cO/I)^\times$ such that the matrix with columns $uv_1,v_2,\dots, v_n$ lies in $\SL_n(\cO/I)$. Call this matrix $M$. Since $\cO/I$ satisfies $SR_2$, the group $\SL_n(\cO/I)$ is generated by elementary matrices, and hence the map $\SL_n(\cO)\to \SL_n(\cO/I)$ is surjective. Let $\widetilde{M}$ be a lift of $M$ in $\SL_n(\cO)$, and let the columns of $\widetilde{M}$ be $w_1,\dots,w_n$. Then, by construction \[\phi(\left[\begin{array}{c|c|c}w_1 & \dots & w_n \end{array}\right]) = \left[\begin{array}{c|c|c}uv_1 & \dots & v_n \end{array}\right].\] Since rescaling a vector does not change its span, the right hand side is equal to $A$. Since $A$ was arbitrary, we have shown that the image of $\phi$ includes all apartment classes. Therefore $\phi$ is surjective, as required.
	\end{proof}
	\section{$\tilde{\HH}_{n-2}(\cT_n(R))$ as a $\GL_n(R)$-representation.}
	We use the following notation: If $K$ is a field, then $$\St_n^K(R):= \HH_{n-2}(\cT_n(R);K) \cong \St_n(R)\tensor K.$$
	The natural action of $\GL_n(R)$ on $\cT_n(R)$ makes $\St_n^K(R)$ a $\GL_n(R)$-representation. 
	
	Recall that Steinberg \cite{Steinberg} and Putman-Snowden \cite{SteinIrred} prove that $\St_n^K(F)$ is irreducible when $F$ is any field and $K$ is of characteristic zero. We prove a converse:
	\begin{prop}\label{SteinReducible}
		If $R$ is not a field, then $\St_n^K(R)$ is not irreducible.
	\end{prop}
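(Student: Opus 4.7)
The plan is to exploit the functoriality of $\cT_n(-)$ to exhibit a proper nonzero $\GL_n(R)$-subrepresentation of $\St_n^K(R)$ as the kernel of a reduction map. Throughout I assume $n \geq 2$ (the $n=1$ case is degenerate, since $\cT_1(R) = \emptyset$).

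Since $R$ is not a field, pick a maximal ideal $\fm$ of $R$ and a nonzero element $m \in \fm$. By the functoriality of the Tits complex, the quotient $R \to R/\fm$ induces a $\GL_n(R)$-equivariant map
\[ \phi \colon \St_n^K(R) \to \St_n^K(R/\fm), \]
where $\GL_n(R)$ acts on the target through the reduction $\GL_n(R) \to \GL_n(R/\fm)$. I would first observe that $\phi \neq 0$: it carries the apartment class $[e_1 \,|\, \cdots \,|\, e_n]$ to its analogue over the field $R/\fm$, which is nonzero by the classical Solomon--Tits theorem (it sits in the upper-triangular basis).

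The main step is to construct a nonzero element of $\ker \phi$. Set
\[ \eta = \left[e_1 + me_2 \,\big|\, e_2 \,\big|\, e_3 \,\big|\, \cdots \,\big|\, e_n\right] - \left[e_1 \,\big|\, e_2 \,\big|\, \cdots \,\big|\, e_n\right] \in \St_n^K(R). \]
Both terms are legitimate apartment classes (the associated change-of-basis matrices have determinant $1$). Because $m \equiv 0 \pmod{\fm}$, the two bases reduce to the same basis of $(R/\fm)^n$, so $\phi(\eta) = 0$. To verify $\eta \neq 0$, I would adapt the support-on-a-chamber argument from the proof of Theorem~\ref{mainthm:UTApts} and consider the top-dimensional simplex
\[ 0 \subsetneq \Span\{e_1 + me_2\} \subsetneq \Span\{e_1, e_2\} \subsetneq \Span\{e_1, e_2, e_3\} \subsetneq \cdots \subsetneq \Span\{e_1, \ldots, e_{n-1}\} \subsetneq R^n. \]
Using the identity $\Span\{e_1+me_2, e_2\} = \Span\{e_1, e_2\}$, this is a chamber of the apartment of $\{e_1 + me_2, e_2, \ldots, e_n\}$. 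However, since $m \neq 0$ and $\fm$ is proper, $\Span\{e_1 + me_2\}$ is not a unit scalar multiple of any $\Span\{e_i\}$, so the flag above is not a chamber of the apartment of $[\id_n]$. Evaluating the associated chamber map on $\eta$ therefore returns $\pm 1 \neq 0$.

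The conclusion is immediate: $\ker \phi$ is a proper nonzero $\GL_n(R)$-subrepresentation of $\St_n^K(R)$. The only real subtlety is the nonvanishing of $\eta$, which the chamber above handles directly; the rest of the argument is formal functoriality combined with the classical Solomon--Tits theorem applied to the field $R/\fm$.
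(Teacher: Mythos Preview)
Your argument is correct and essentially identical to the paper's: both use the kernel of the reduction map $\St_n^K(R)\to\St_n^K(R/\fm)$, show it is proper via the image of the identity apartment, and exhibit a nonzero element in the kernel using the same flag $\Span\{e_1+me_2\}\subsetneq\Span\{e_1,e_2\}\subsetneq\cdots$. Your class $\eta$ is exactly the negative of the paper's (the paper writes it as $[\,e_2\mid e_1+me_2\mid e_3\mid\cdots\,]+[\id_n]$, which equals $-\eta$ after swapping the first two columns), so the two proofs coincide up to sign.
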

	\begin{proof}
		Let $\fm$ be a maximal ideal of $R$, and define the subspace $$V= \ker(\phi:\St_n^K(R)\to \St_n^K(R/\fm)).$$ Since $\phi$ is a map of $\GL_n(R)$-representations, $V$ is a subrepresentation of $\St_n^K(R)$. Notice that, if $A$ is an upper-triangular apartment class in $\St_n^K(R)$, then so is $\phi(A)$ in $\St_n^K(R/\fm)$. Therefore, $\phi$ is not zero, and so $V$ is proper. The class $\eta$ defined in the proof of Theorem \ref{mainthm:UTApts} lies in $V$, so $V$ is nonzero. Therefore, $\St_n^K(R)$ is reducible.
	\end{proof}
	
	For the remainder of the section, let $R$ be a finite ring.  Let $I$ be an ideal of $R$. There is a natural map $\cT_n(R)\to\cT_n(R/I)$, which induces a map $\St_n(R)\to \St_n(R/I)$ on homology. Since this is a map of representations and $\St_n(R/I)$ is $\Gamma_n(I)$-invariant, we get a map $\St_n^K(R)^{\Gamma_n(I)}\to \St_n^K(R/I)$. Theorem \ref{mainthm:coinvariants} states that this is an isomorphism when $I$ lies inside the Jacobson radical and $K$ is of characteristic zero:
	
	\begin{proof}[Proof of Theorem \ref{mainthm:coinvariants}]
		Let $\tilde{C}_\bullet(X,K)$ denote the augmented chain complex associated to the simplicial complex $X$. First, we show that $\tilde{C}_q(\cT_n(R);K)^{\Gamma_n(I)} \cong \tilde{C}_q(\cT_n(R/I);K)$. Since $\tilde{C}_q(\cT_n(R);K)$ is the module of formal linear combinations of flags of length $q+1$, it follows that $\tilde{C}_q(\cT_n(R);K)^{\Gamma_n(I)}$ is the submodule such that the coefficients of flags are constant along $\Gamma_n(I)$-orbits. Using Lemma \ref{flagTrans}, we can understand $\tilde{C}_q(\cT_n(R);K)^{\Gamma_n(I)}$ as a permutation representation. Let $\lambda$ be a partition of $n$, and let $P_{\lambda}(R)$ be the stabilizer of the standard flag $\Span_\lambda (e_1,\dots, e_n)$. Then, $\tilde{C}_q(\cT_n(R);K)$ is the permutation representation on the set:
		$$\bigsqcup_{\abs{\vec{\lambda}} = q+1} \Fl_\lambda(R) \cong \bigsqcup_{\abs{\vec{\lambda}} = q+1} \GL_n(R)/P_{\vec{\lambda}}(R).$$
		The $\Gamma_n(I)$-orbits of this are in bijection with the double cosets
		$$\bigsqcup_{\abs{\vec{\lambda}} = q+1} \Gamma_n(I)\backslash\GL_n(R)/P_{\vec{\lambda}}(R).$$
		Notice that $\Gamma_n(I)$ is normal, so this is still naturally a left $\GL_n(R)$-set. Since $I$ is contained in the Jacobson radical, $\GL_n(R)$ surjects onto $\GL_n(R/I)$, so we have:
		$$\bigsqcup_{\abs{\vec{\lambda}} = q+1} \Gamma_n(I)\backslash\GL_n(R)/P_{\vec{\lambda}}(R)\cong \bigsqcup_{\abs{\vec{\lambda}} = q+1} \GL_n(R/I)/\text{im}(P_{\vec{\lambda}}(R)).$$
		Here, $\text{im}(P_{\vec{\lambda}}(R))$ denotes the image of $P_{\vec{\lambda}}(R)$ in $\GL_n(R/I)$. Notice that $P_\lambda(R)$ consists of block matrices, where a diagonal block lives in $\GL_{\lambda_i}(R)$, a block below the diagonal is zero, and a block above the diagonal is an arbitrary matrix. Since $I$ lies in the Jacobson radical, $\text{im}(P_{\vec{\lambda}}(R)) = P_\lambda(R/I)$, and so:
		 
		$$\bigsqcup_{\abs{\vec{\lambda}} = q+1} \GL_n(R/I)/\text{im}(P_{\vec{\lambda}}(R)) = \bigsqcup_{\abs{\vec{\lambda}} = q+1} \GL_n(R/I)\left/P_{\vec{\lambda}}(R/I)\right. \cong \bigsqcup_{\abs{\vec{\lambda}} = q+1} \Fl_\lambda(R/I).$$
		Therefore, $\tilde{C}_q(\cT_n(R);K)^{\Gamma_n(I)}$ and $\tilde{C}_q(\cT_n(R/I);K)$ are permutation representations on the same $\GL_n(R)$-set, and so are isomorphic.
		
		It follows easily that this extends to an isomorphism of complexes, since the boundary maps of both complexes are given by alternating sums of maps of these $\GL_n(R)$-sets which forget certain subspaces in the flag. Clearly these commute with our chosen isomorphisms, and so $\tilde{C}_\bullet(\cT_n(R);K)^{\Gamma_n(I)} \cong \tilde{C}_\bullet(\cT_n(R/I);K)$ as complexes.
		
		Since $\Gamma_n(I)$ is finite and $K$ has characteristic zero, the functor $(-)^{\Gamma_n(I)}$ is exact. Therefore this isomorphism of complexes induces an isomorphism 
		\begin{align*}
			\St_n^K(R)^{\Gamma(I)} = \tilde{\HH}_{n-2}(\cT_n(R);K)^{\Gamma(I)} &\cong \HH_{n-2}(\tilde{C}_\bullet(\cT_n(R);K)^{\Gamma_n(I)})\\ &\cong \HH_{n-2}(\tilde{C}_\bullet(\cT_n(R/I);K)) = \St_n^K(R/I)\qedhere
		\end{align*}
	\end{proof}
	
	If $(R,\fm)$ is a finite local ring, then $\fm$ is its Jacobson radical, and so Theorem \ref{mainthm:coinvariants} implies that the length of $\St^\bbQ_n(R)$ as a $\GL_n(R)$-representation is at least the length of $R$. We want to know, in general, what the length of $\St^K_n(R)$ is as a $\GL_n(R)$-representation.
	
	This seems quite difficult to compute in general.  We will solve the $n=2$ case for $\bbZ/p^k\bbZ$.
	
	First, we recall a strengthening of locality from commutative algebra:
	\begin{defn}
		A ring is \emph{uniserial} if its ideals are linearly ordered by inclusion.
	\end{defn}
	It is easily seen that $\bbZ/p^k\bbZ$ is uniserial. In fact:
	\begin{prop}\label{prop:DedekindUniserial}
		Let $R$ be a Dedekind domain, and $\fp$ a maximal ideal. Then $R/\fp^k$ is uniserial.
	\end{prop}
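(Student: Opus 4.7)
The plan is to invoke unique factorization of ideals in a Dedekind domain. Recall that in a Dedekind domain, every nonzero proper ideal factors uniquely as a product of prime ideals, and containment of ideals is equivalent to divisibility: $I \supseteq J$ if and only if $I \mid J$.

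First, I would use the standard correspondence between ideals of $R/\fp^k$ and ideals of $R$ containing $\fp^k$. It therefore suffices to show that the ideals of $R$ containing $\fp^k$ form a chain.

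Next, I would argue that an ideal $I$ of $R$ contains $\fp^k$ if and only if $I$ divides $\fp^k$. By the unique factorization of ideals in $R$ (together with the fact that $\fp$ is a maximal, hence nonzero prime, ideal), the divisors of $\fp^k$ are precisely the ideals $\fp^j$ for $0 \leq j \leq k$ (where $\fp^0 = R$). Hence the ideals of $R$ containing $\fp^k$ are exactly
\[ R \supsetneq \fp \supsetneq \fp^2 \supsetneq \cdots \supsetneq \fp^k, \]
which is a chain. Passing to the quotient, the ideals of $R/\fp^k$ are linearly ordered by inclusion, so $R/\fp^k$ is uniserial.

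There is essentially no obstacle here; the entire content of the proposition is the translation between containment and divisibility in a Dedekind domain, combined with unique factorization of ideals. The only mild subtlety is ensuring the containments are strict, which follows because $\fp^j = \fp^{j+1}$ would force $\fp^j$ (and hence, upon localizing at $\fp$, the maximal ideal of $R_\fp$) to equal its own square, contradicting the fact that $R_\fp$ is a DVR.
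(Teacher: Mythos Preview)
Your argument is correct. The paper takes a different but equally short route: it localizes at $\fp$ to obtain the DVR $R_\fp$, observes that every DVR is uniserial (hence so are its quotients), and then uses the isomorphism $R/\fp^k \cong R_\fp/\fp^k R_\fp$ which holds because $\fp$ is maximal. Your approach is arguably more direct, since it avoids justifying this localization isomorphism and instead appeals only to the containment--divisibility correspondence and unique factorization of ideals; the paper's approach, on the other hand, makes transparent that the result is really a statement about the local ring $R_\fp$, and indeed you end up invoking $R_\fp$ anyway to confirm the strictness of the chain. Both arguments are standard and essentially equivalent in depth.
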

	\begin{proof}
		The localization $R_\fp$ is a DVR, since $R$ is a Dedekind domain. Every DVR is uniserial, and therefore so are its quotients. Since $\fp$ is maximal, $R/\fp^k$ is local at $\fp$, and it follows that $R/\fp^k \cong R_\fp/\fp^k$, whence the proposition.
	\end{proof}
	\begin{exmp}
		The ring $\bbF_2[x,y]/(x^2,xy,y^2)$ is a finite local ring that is \emph{not} uniserial. Indeed, its lattice of ideals is: $$\xymatrix{ & (x,y) \ar@{-}[dr] \ar@{-}[d] \ar@{-}[dl]& \\ (x) \ar@{-}[dr] & (x+y) \ar@{-}[d] & (y) \ar@{-}[dl]\\ & 0 & }$$
	\end{exmp}
	We also need the following result from Mackey theory:
	\begin{lem}
		Let $G$ be a finite group, and $X$ a transitive $G$-set. Let $K$ be a field of characteristic zero. Denote by $K[X]$ the permutation representation associated to $X$. Then, as vector spaces,
		$$\End_G(K[X])\cong K[(X\times X)/G]$$
	\end{lem}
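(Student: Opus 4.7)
The plan is to identify $\End_G(K[X])$ with the space of $G$-invariant functions on $X\times X$, and then observe that such functions form a permutation module indexed by $G$-orbits on $X\times X$.

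First I would exhibit a natural isomorphism of vector spaces $\End(K[X])\cong K[X\times X]$. Concretely, writing $\{e_x\}_{x\in X}$ for the standard basis of $K[X]$, a linear endomorphism $\phi$ is determined by the coefficients $M_{y,x}\in K$ in the expansion $\phi(e_x)=\sum_{y\in X} M_{y,x}\,e_y$. Since $X$ is finite (as a transitive set under a finite group), the assignment $\phi\mapsto \sum_{(y,x)} M_{y,x}\,e_{(y,x)}$ is a vector space isomorphism $\End(K[X])\xrightarrow{\sim} K[X\times X]$.

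Next I would check that this isomorphism is $G$-equivariant, where $G$ acts on $\End(K[X])$ by conjugation $(g\cdot\phi)(v)=g\phi(g^{-1}v)$ and on $K[X\times X]$ by the diagonal permutation action on $X\times X$. A short calculation with the formula above gives $(g\cdot M)_{y,x}=M_{g^{-1}y,g^{-1}x}$, which is exactly the matrix of the diagonal action. Taking $G$-invariants on both sides yields
\[
\End_G(K[X]) \;\cong\; K[X\times X]^{G}.
\]
Finally, since $K[X\times X]$ is the permutation representation on the finite $G$-set $X\times X$, its invariant subspace has, as a basis, the indicator functions (sums of basis vectors over a single orbit) of the $G$-orbits on $X\times X$. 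Therefore $K[X\times X]^G\cong K[(X\times X)/G]$ as vector spaces, which completes the argument.

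There is no serious obstacle here; the only point that needs care is the equivariance check in the second step, where one must track whether the conjugation action on $\End(K[X])$ translates to the diagonal action on $X\times X$ or to its twist by inversion. Either way, the orbits are the same, so the final conclusion is unaffected. Note also that the hypothesis that $K$ has characteristic zero (and that $X$ is transitive) is not used in this vector space statement; it would only be needed if one wanted to refine the conclusion to an algebra isomorphism or to decompose $K[X]$ into irreducibles.
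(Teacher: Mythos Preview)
Your argument is correct and takes a genuinely different route from the paper. The paper writes $X\cong G/H$, invokes Mackey's theorem to identify $\End_G(K[X])$ with $K[H\backslash G/H]$, and then constructs an explicit bijection between double cosets $H\backslash G/H$ and diagonal orbits $(X\times X)/G$. Your approach bypasses Mackey theory entirely: you identify $\End(K[X])$ with $K[X\times X]$ via matrix coefficients, check equivariance, and read off the invariants as orbit sums. This is more elementary and self-contained, and as you observe it requires neither the transitivity hypothesis nor the characteristic-zero assumption (both are genuinely unnecessary for the vector-space statement). The paper's route, on the other hand, makes the connection to the classical double-coset picture of intertwiners explicit, which is the language one often wants when computing with induced representations; but for the purposes of the lemma as stated, your argument is cleaner.
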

	\begin{proof}
		Since $X$ is a transitive $G$-set, we may write $X\cong G/H$ for some subgroup $H$ of $G$. By Mackey's theorem (\cite[Theorem 32.1]{Bump}, in the case $H_1=H_2$ and $V_1=V_2 = K_{\text{triv}}$), $\End_G(K[X])$ is isomorphic to $K[H\backslash G /H]$. We demonstrate a bijection $$H\backslash G /H\longleftrightarrow (X\times X)/G$$
		Let $(x_1,x_2)\in X\times X$. Since $X$ is transitive, this is in the $G$-orbit of $(H,gH)$, for some $g\in G$. It is also in the orbit of $(H,g'H)$ if and only if $g'H = h\cdot(gH)$ for some $h\in H$, since $H$ is the stabilizer of the coset $H$. So, associated to the orbit of $(x_1,x_2)$ is a unique double coset $HgH$. This produces a map $(X\times X)/G\to H\backslash G/H$. To produce an inverse, one associates to the double coset $HgH$ the orbit of $(H,gH)$. 
		
		We have exhibited a bijection between $H\backslash G /H$ and $(X\times X)/G$, and hence proved that $K[H\backslash G /H]\cong K[(X\times X)/G]$ as vector spaces.
	\end{proof}
	The following result is a strengthening of Theorem \ref{mainthm:length2} (cf.\ Proposition \ref{prop:DedekindUniserial}).
	\begin{thm}
		Let $(R,\fm)$ be a finite uniserial ring, and $k$ the length of $R$. Let $K$ be a field of characteristic zero. Then, $\St^K_2(R)$ has exactly $k$ irreducible summands. Each irreducible summand appears exactly once, and none are trivial.
	\end{thm}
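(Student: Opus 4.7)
The plan is to combine Theorem \ref{mainthm:coinvariants} with an endomorphism-ring computation via the Mackey lemma stated just above.

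First, observe that $\cT_2(R)$ is zero-dimensional, so its vertex set is $\bbP^1(R) := \Gr_1^2(R)$, and $\St_2^K(R)$ fits in a short exact sequence of $\GL_2(R)$-representations
\[
0 \to \St_2^K(R) \to K[\bbP^1(R)] \to K \to 0.
\]
Because $\GL_2(R)$ is finite and $K$ has characteristic zero, this splits as $K[\bbP^1(R)] \cong K \oplus \St_2^K(R)$, with the trivial summand equal to the constant functions. Transitivity of the action on $\bbP^1(R)$ (Corollary \ref{flagTrans}) implies this is the only copy of the trivial representation, and hence the trivial representation does not appear in $\St_2^K(R)$; this will handle the ``none are trivial'' part of the claim.

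Next, I apply the Mackey lemma to identify $\dim \End_{\GL_2(R)}(K[\bbP^1(R)])$ with the number of $\GL_2(R)$-orbits on $\bbP^1(R) \times \bbP^1(R)$. For a pair $(L_1,L_2)$ of free and cofree rank-one summands, the intersection $L_1 \cap L_2$ is an $R$-submodule of $L_1 \cong R$. Since $R$ is uniserial of length $k$ with principal maximal ideal $\fm = (\pi)$, such submodules are exactly $\fm^j L_1$ for $j = 0,1,\ldots,k$. Normalizing $L_1 = Re_1$ via $\GL_2(R)$-transitivity and then analyzing the action of the upper-triangular stabilizer on representatives $R(1, u\pi^j)$, $u \in R^\times$, one checks that the intersection type is a complete invariant, producing exactly $k+1$ orbits. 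Hence $\dim \End_{\GL_2(R)}(K[\bbP^1(R)]) = k+1$, and stripping off the trivial summand leaves
\[
\dim \End_{\GL_2(R)}(\St_2^K(R)) = k.
\]

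Finally, for each $j = 1,\ldots,k$, Theorem \ref{mainthm:coinvariants} provides an isomorphism $\St_2^K(R)^{\Gamma_2(\fm^j)} \cong \St_2^K(R/\fm^j)$, and each such invariant subspace is a $\GL_2(R)$-subrepresentation since $\Gamma_2(\fm^j)$ is normal in $\GL_2(R)$. The rank formula $\rk \St_2(R/\fm^j) = q^j + q^{j-1} - 1$, a direct consequence of Theorem \ref{recdim} and Proposition \ref{GrassSize}, is strictly increasing in $j$, so we obtain a strict chain
\[
0 \subsetneq \St_2^K(R/\fm) \subsetneq \St_2^K(R/\fm^2) \subsetneq \cdots \subsetneq \St_2^K(R/\fm^k) = \St_2^K(R)
\]
of $\GL_2(R)$-subrepresentations, forcing the composition length to be at least $k$. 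Writing $\St_2^K(R) \cong \bigoplus_i V_i^{n_i}$ in its isotypic decomposition with pairwise nonisomorphic nontrivial irreducibles $V_i$, one has $\sum_i n_i^2 = k$ from the endomorphism ring and $\sum_i n_i \geq k$ from the chain. Since $n_i \geq 1$ implies $n_i^2 \geq n_i$, equality is forced throughout: every $n_i$ equals $1$ and there are exactly $k$ summands.

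I expect the main obstacle to be the orbit count in the middle step: one needs the uniserial hypothesis in an essential way to conclude that $L_1 \cap L_2$ is a complete invariant for $\GL_2(R)$-orbits on pairs (this would fail for a non-uniserial local ring such as $\bbF_2[x,y]/(x^2,xy,y^2)$), and verifying transitivity within each intersection type requires a careful computation with the stabilizer and the uniformizer $\pi$.
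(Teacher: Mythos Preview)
Your proposal is correct and follows essentially the same approach as the paper: both decompose $K[\bbP^1(R)]\cong K\oplus \St_2^K(R)$, use the Mackey lemma to identify $\dim\End_{\GL_2(R)}(K[\bbP^1(R)])$ with the number of $\GL_2(R)$-orbits on $\bbP^1(R)\times\bbP^1(R)$, show via uniseriality that the intersection type is a complete invariant yielding $k+1$ orbits, and combine this upper bound with the lower bound coming from Theorem~\ref{mainthm:coinvariants}. The only cosmetic differences are that you establish the strict chain directly via the rank formula (the paper uses induction on $k$), and you finish with the explicit inequality $\sum n_i \le \sum n_i^2 = k$ rather than working with $K[\bbP^1(R)]$ throughout; both arguments are equivalent.
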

	\begin{proof}
		Theorem \ref{mainthm:coinvariants} implies that $\St_2^K(R/\fm^{k-1})$ is a direct summand of $\St_n^K(R)$. By induction on $k$, $\St_2^K(R/\fm^{k-1})$ has exactly $k-1$ distinct irreducible summands. Since $\St_2^K(R)$ is not $\Gamma_2(\fm)$-invariant, $\St_2^K(R/\fm^{k-1})$ is a proper subspace, and so $\St_2^K(R)$ has at least $k$ distinct irreducible summands.
		
		We now prove an upper bound. Notice that $$C_0(\cT_2(R),K)\cong \St_2^K(R)\oplus K,$$ and that $C_0(\cT_2(R))$ is the permutation representation $K[\bbP^1(R)]$. By the previous lemma, we know that $$\End_{\GL_2}\left(C_0(\cT_2(R))\right)\cong K[(\bbP^1\times \bbP^1)/\GL_2].$$ The dimension of the right hand side is the number of orbits of $\bbP^1\times \bbP^1$.
		
		Consider the pair of lines $(L_1,L_2)$, which are generated by $v_1,v_2$ respectively. Let $L_1\cap L_2$ be the ideal $I\leq R$. If $I=0$, then the map $$R^2\to R^2:= (x,y)\mapsto (x\;v_1+y\;v_2)$$
		is an element of $\GL_2$ taking $(Re_1,Re_2)$ to $(L_1,L_2)$.
				
		Assume now $I\ne 0$. Acting by $\GL_2$, we may take $L_1 = Re_1$, $v_1=e_1$. Write $v_2 = ae_1+be_2$. We know $b\in \text{Ann}(I)$, which is a proper ideal. Since $v_2$ is unimodular and $R$ is local, we know $a$ must be a unit. Without loss of generality, $a=1$. So $L_2 = R(e_1 + be_2)$, where $b\in \text{Ann}(I)$, but is in no proper submodule of $\text{Ann}(I)$. By uniseriality, $\text{Ann}(I)$ is cyclic as an $R$-module with generator $i_0$, and there is some $u\in R^\times$ such that $b=ui_0$. So, we may act by $\GL_2$ to take $(L_1,L_2)$ to $(Re_1,R(e_1+i_0\;e_2))$.
		
		The above argument shows that $\bbP^1\times \bbP^1$ consists of $k+1$ orbits as a $\GL_2$-set. This shows that $\End_{\GL_2}\left(C_0(\cT_2(R))\right)$ is $(k+1)$-dimensional, and so its length as a $\GL_2$-representation is at most $k+1$. But we already know its length is at least $k+1$. Therefore $C_0(\cT_2(R))$ has exactly $k+1$ irreducible summands, and so $\St_2^K(R)$ has exactly $k$ irreducible summands.
		
		It is classical that when the length of a representation is equal to the dimension of its endomorphism ring, every irreducible summand appears exactly once. This implies that all summands of $\St_2^K(R)$ are all distinct, and that none are trivial.
	\end{proof}
	
	\bibliographystyle{plain}
	\bibliography{bibliography}

\end{document}